\definecolor{Red}{rgb}{0.7,0,0.1}
\definecolor{Green}{rgb}{0,0.7,0}
\def\url@leostyle{%
 \@ifundefined{selectfont}{\def\UrlFont{\sf}}{\def\UrlFont{\scriptsize\ttfamily}}} \makeatother\urlstyle{leo}
\newtheorem{theorem}{Theorem}
\newtheorem{proposition}[theorem]{Proposition}
\newtheorem{lemma}[theorem]{Lemma}
\newtheorem{corollary}[theorem]{Corollary}
\theoremstyle{definition}
\newtheorem{definition}[theorem]{Definition}
\theoremstyle{remark}
\newtheorem{remark}[theorem]{Remark}
\numberwithin{equation}{section}
\numberwithin{theorem}{section}
\def\cC{\mathcal{C}}
\def\cD{\mathcal{D}}
\def\cH{\mathcal{H}}
\def\cM{\mathcal{M}}
\def\cP{\mathcal{P}}
\def\cZ{\mathcal{Z}}
\def\bC{\mathbb{C}}
\def\bR{\mathbb{R}}
\def\bS{\mathbb{S}}
\def\bZ{\mathbb{Z}}
\newcommand{\1}{\mathbbm{1}}                     % preferable way of writing indicator function
\title{Local-in-time Solvability and Space Analyticity for \\ the Navier-Stokes Equations with BMO-type Initial Data}
\author{Liaosha Xu\\
%\small  Department of Mathematics \\[-0.6ex]
%\small  University of Virginia \\[-0.6ex]
%\small  141 Cabell Dr \\[-0.6ex]
%\small  Charlottesville, VA 22903  \\[-0.6ex]
%\small  \url{lx5af@virginia.edu}
}
\date{}
\begin{document}

\maketitle

\begin{abstract}

\bigskip
It is proved that there exists a local-in-time solution $u\in C([0,T),bmo(\bR^d)^d)$ of the Navier-Stokes equations such that every $u(t)$ has an analytic extension on a complex domain whose size only depends on $t$ (and increases with $t$) and the external force $f$, assuming only that the initial velocity $u_0$ is a local $BMO$ function. Our method for proving is a combination and refinement of the work by \citet{Grujic1998}, \citet{Guberovic2010} and \citet{Kozono2003}. One challenging step is the estimation of the heat and Stokes semigroups from $BMO$-type spaces to $L^\infty$; a result itself of independent interest. We also apply the idea to the analyticity of vorticity with the assistance of Calder\'on-Zygmund theory.
%$$\cD_t=\{(x+iy)\in\bC^d\ :\ |y|\le C^{-1}t^{\frac{1}{2}},\ |y|<\delta_f\}$$
%where $\delta_f$ is a constant depending only on the external force $f$

\bigskip

%{\noindent \small
%{\it \bf Keywords:} }
\end{abstract}

\section{Introduction}

We consider the space analyticity of solutions to the Navier-Stokes system in $\bR^d$ ($d\ge 2$)
\begin{align}
&\partial_t u-\Delta u+ u\cdot\nabla u+\nabla p=f, &&\textrm{ in }\bR^d\times (0,T) \label{eq:NSE1}
\\
&\textrm{div}\ u=0,                                &&\textrm{ in }\bR^d\times (0,T) \label{eq:NSE2}
\\
&u(\cdot,0)=u_0(\cdot),                            &&\textrm{ in }\bR^d\times \{t=0\} \label{eq:NSE3}
\end{align}
where the force $f(\cdot,t)$ is real-analytic in space with an uniform analyticity radius $\delta_f$ for all $t\in\bR^+$, which admits some analytic extension $f+ig$, while $u_0$ is the given initial velocity vector field. We prove that there exists a solution $u\in C([0,T^*),bmo(\bR^d)^d)$ for the system \eqref{eq:NSE1}-\eqref{eq:NSE3} evolving from the initial value $u_0\in bmo(\bR^d)^d$, which admits space analyticity in a domain $\cD_t$ of $\bC^d$ for every $t$ such that both real and imaginary parts have bounded local mean oscillations, where the time bound $T^*$ is characterized only by $\|u_0\|_{bmo}$. With certain modification of the proof, the result also holds if $bmo$ space is replaced by $B^0_{\infty,\infty}$ space.

The study of analyticity radius of solutions to the NSE dates back to the seminal work by \citet{Foias1989} who applied Fourier techniques and Gevrey spaces in $L^2$. Related results for $L^p$ spaces can be found in \citet{Lemarie-Rieusset2004}. Similar types of approaches to other equations and function spaces had been developed by \citet{Levermore1997}, \citet{Paicu2011}, \citet{Biswas2007}, \citet{Biswas2014}, \citet{Bae2012} and \citet{Ignatova2012}.
A different method for analyticity of the NSE with $L^p$ ($3<p<\infty$) initial value was first presented by \citet{Grujic1998} using the technique of complexified extension. The idea was adopted to the NSE with $L^\infty$ initial value by \citet{Guberovic2010}. This method was also applied to non-linear heat equations on bounded domains by \citet{Grujic1999} for the analyticity at interior points. Later, the technique was refined by \citet{Bradshaw2015a} for local analyticity of the NSE with locally analytic forcing term.

Among existing literatures of about NSE, very few had addressed the analyticity or even the strong solvability of the Navier-Stokes equations ($d\ge 3$) with non-decaying or oscillatory type initial values. \citet{Giga1999} proved strong solutions exist for $u_0\in BUC$ (bounded and uniformly continuous functions). \citet{Guberovic2010} derived the space analyticity for $u_0\in L^\infty$, showing the time-local existence of mild solution for $L^\infty$ initial value.
\citet{Sawada2003} proved the time-local existence of solutions in Besov spaces with non-positive differential orders using various H\"older-type estimates in the Besov spaces. Later, \citet{Kozono2003} showed the time-local existence in Besov spaces and in ``time-logarithmically-weighted $L^\infty$ spaces'' for $u_0\in B^0_{\infty,\infty}$ using a different converging algorithm. Until now, no result has been established for existence in $C([0,T],X)$ where $X$ is either a local or global  $BMO$-type of space.
This paper is the first attempt to deal with this challenge. We develop a new iteration scheme and converging argument from the approaches as in \citet{Grujic1998}, \citet{Sawada2003} and \citet{Kozono2003}, and consider the strong solvability and spatial analyticity of \eqref{eq:NSE1}-\eqref{eq:NSE3} with $u_0\in bmo(\bR^d)^d$.
Having derived some results on the boundedness of the Stokes semigroups from $BMO$ (or Besov) spaces to $L^\infty$, with the chain of embeddings which connects $L^p$, $BMO$ and Besov spaces, we proved that the complexified solution of \eqref{eq:NSE1}-\eqref{eq:NSE3} exists locally in time with almost $t^{\frac{1}{2}}$ analyticity radius for each $u(t)$, and the real solution, i.e. restriction on the real axis, is classical in the interior of the parabolic cylinder $\bR^d\times(0,T^*)$.

A natural follow-up question is whether we can extend the method to the usual BMO spaces (homogeneous type) and study the global oscillations of the solution to \eqref{eq:NSE1}-\eqref{eq:NSE3}. A positive answer is given in a forthcoming paper. We also attempt to study the local analyticity and the bound of oscillations of the NSE on bounded domains, especially the ones with regular boundaries, i.e. straight and circular boundary lines.
One of the motivations for studying the analyticity estimate and in-time bound of $BMO$-norms is their connection with the sparseness of the region of intense oscillations which can possibly provide a geometric type criterion weaker than the one presented in \citet{Grujic2013} which requires the sparseness of the level sets of velocity or vorticity truncated by the $L^\infty$-norm.
Since the measures of the level sets of oscillation mean are naturally controlled by the $BMO$-norms (e.g. John-Nirenberg inequality), replacing the notion of intense velocity (or vorticity) by that of intense oscillation may potentially reduce the restriction on the geometry of fluid activity near the possible blow-up time. The fundamental step of this idea requires the in-time continuity of $BMO$ norms of velocity and vorticity.

The paper is organized as follows. In Section~\ref{sec:MainResult}, we state the main theorems with the basic setup for complexified solutions and analyticity for both velocity and vorticity of the Navier-Stokes equations, followed by their proofs. In the appendix, we exhibit and develop some auxiliary results about the heat and Stokes semigroups in Besov and BMO-type spaces which we quote in Section~\ref{sec:MainResult}.

\section{Main Results}\label{sec:MainResult}

This section is devoted to the statements and proofs of the main results. Although being formulated in analogous ways, the results for velocity and vorticity are considered in two separated theorems since their proofs require different technics.  For convenience and clarity, we present several technical lemmas in preparation for the main theorems while their proofs are deferred to the appendix. We omit the definitions and some basic properties of the oscillatory function spaces and singular integral operators introduced in the proofs; the reader may refer to \citet{Stein1993}, \citet{Bahouri2011} and \citet{Triebel2010} for details.

The main result for velocity is as follows:

\begin{theorem}\label{th:MainThm}
Assume $u_0\in bmo(\bR^d)$ and $f(\cdot,t)$ is divergence-free and real-analytic in the space variable with the analyticity radius at least $\delta_f$ for all $t\in[0,\infty)$, and the analytic extension $f+ig$ satisfies
\begin{align*}
\Gamma(t):= \sup_{s<t}\sup_{|y|<\delta_f} \left(\|f(\cdot,y,s)\|_{bmo}+\|g(\cdot,y,s)\|_{bmo}\right)<\infty\ .
\end{align*}
Fix a $t_0>0$ and for any $1<M<2$ define
\begin{align}
T_*=\min\left\{\frac{1}{C(M)\|u_0\|_{bmo}^{2}\Phi_1(\|u_0\|_{bmo})}\ ,\ \ \frac{\|u_0\|_{bmo}\Phi_1(\|u_0\|_{bmo})} {C(M)\ \Phi_1(\Gamma(t_0))}\right\}
\end{align}
where $C(M)$ is a constant only depending on $M$ and $\Phi_1(r)$ is some function with logarithmic growth as $r\to\infty$, both of which are independent of $u_0$ and $f$. Then there exists a solution
\begin{align*}
u\in C([0,T_*),bmo(\bR^d)^d)
\end{align*}
of the NSE \eqref{eq:NSE1}-\eqref{eq:NSE3} such that for every $t\in (0,T_*)$, $u$ is a restriction of an analytic function $u(x,y,t)+iv(x,y,t)$ in the region
\begin{align*}
\cD_t=: \left\{(x,y)\in\bC^d\ \big|\ |y|\le \min\{ct^{1/2}\Phi_2(t)/c(M),\delta_f\}\right\}\ .
\end{align*}
where $c(M)$ is a constant only depending on $M$ and $\Phi_2(t)$ is an explicitly defined function with logarithmic growth as $t\to 0^+$ which will be given in the proof. Moreover,
\begin{align}
&\underset{t\in(0,T)}{\sup}\ \underset{y\in\cD_t}{\sup}\|u(\cdot,y,t)\|_{bmo} + \underset{t\in(0,T)}{\sup}\ \underset{y\in\cD_t}{\sup} \|v(\cdot,y,t)\|_{bmo}\le M\cdot\|u_0\|_{bmo}\ ,
\\
&\underset{t\in(0,T)}{\sup}\ \underset{y\in\cD_t}{\sup}\ \phi_1(t)\|u(\cdot,y,t)\|_{L^\infty} + \underset{t\in(0,T)}{\sup}\ \underset{y\in\cD_t}{\sup}\ \phi_1(t)\|v(\cdot,y,t)\|_{L^\infty}\le M\cdot\|u_0\|_{bmo}\
\end{align}
where $\phi_1(t)=[\ln(e+1/t)]^{-1}$.
\end{theorem}

An analogous result for vorticity is as follows: We consider the vorticity-velocity formulation of the 3D Navier-Stokes equations:
\begin{align}\label{eq:VelVorForm}
\partial_t \omega-\Delta \omega =\omega\nabla u - u\nabla\omega,\qquad \omega(0,x)=\omega_0
\end{align}
and we have
\begin{theorem}\label{th:MainThmVor}
Assume $\omega_0\in bmo(\bR^3)\cap L^p(\bR^3)$ where $1\le p<3$. For any $1<M<2$ let
\begin{align}
T_\omega= C(M)^{-1}\left(\|\omega_0\|_{bmo}+\|\omega_0\|_{L^p}\right)^{-2} \Phi_1(\|\omega_0\|_{bmo}+\|\omega_0\|_{L^p})
\end{align}
where $C(M)$ is a constant depending on $M$ and $\Phi_1(r)$ is some function with logarithmic growth as $r\to\infty$, both of which are independent of $\omega_0$. Then there exists a solution
\begin{align*}
\omega\in C([0,T_\omega),bmo(\bR^3)^3)
\end{align*}
for the NSE \eqref{eq:NSE1}-\eqref{eq:NSE3} such that for every $t\in (0,T_\omega)$, $u$ is a restriction of an analytic function $\omega(x,y,t)+i\zeta(x,y,t)$ in the region
\begin{align}\label{eq:ComplexDomVor}
\cD_t=: \left\{(x,y)\in\bC^3\ \big|\ |y|\le ct^{1/2}\Phi_2(t)/c(M) \right\}\
\end{align}
for some constant $c(M)$, where $\Phi_2(t)$ is an explicitly defined function with logarithmic growth as $t\to 0^+$ as given in Theorem~\ref{th:MainThm}. Moreover,
\begin{align}
&\underset{t\in(0,T)}{\sup}\ \underset{y\in\cD_t}{\sup}\|\omega(\cdot,y,t)\|_{bmo} + \underset{t\in(0,T)}{\sup}\ \underset{y\in\cD_t}{\sup} \|\zeta(\cdot,y,t)\|_{bmo}\le M\cdot \|\omega_0\|_{bmo}\ , \label{eq:VorBddbmo}
\\
&\underset{t\in(0,T)}{\sup}\ \underset{y\in\cD_t}{\sup}\ \phi_1(t)\|\omega(\cdot,y,t)\|_{L^\infty} + \underset{t\in(0,T)}{\sup}\ \underset{y\in\cD_t}{\sup}\ \phi_1(t)\|\zeta(\cdot,y,t)\|_{L^\infty}\le M\cdot \|\omega_0\|_{bmo}\  \label{eq:VorBddLinfty}
\end{align}
where $\phi_1(t)=[\ln(e+1/t)]^{-1}$.
\end{theorem}

An improvement of the above result is the following.
\begin{theorem}\label{th:MainThmVor2}
Assume $\omega_0\in bmo(\bR^3)\cap L^p(\bR^3)$ where $1\le p<3$. For any $1<M<2$ let
\begin{align}
T= \bar C(M)^{-1} \left(\|\omega_0\|_{bmo}+\|\omega_0\|_{L^p}\right)^{-1} \bar\Phi_1(\|\omega_0\|_{bmo}+\|\omega_0\|_{L^p})
\end{align}
where $\bar C(M)$ is a constant depending on $M$ and $\bar\Phi_1(r)$ is some function with logarithmic growth as $r\to\infty$, both of which are independent of $\omega_0$. Then there exists a solution
\begin{align*}
\omega\in C([0,T),bmo(\bR^3)^3)
\end{align*}
for the NSE \eqref{eq:NSE1}-\eqref{eq:NSE3} such that for every $t\in (0,T)$, $u$ is a restriction of an analytic function $\omega(x,y,t)+i\zeta(x,y,t)$ in a region $\tilde\cD_t$ approximated by $\cD_t$ in \eqref{eq:ComplexDomVor} for some constant $\bar c(M)$ and some function $\bar\Phi_2(t)$ with logarithmic growth as $t\to 0^+$. And results analogous to \eqref{eq:VorBddbmo} and \eqref{eq:VorBddLinfty} hold with some logarithmic factor $\bar\phi_1$.
\end{theorem}

\begin{remark}
With some modification of the proof, one can show there exists some $T_*$ such that the equations \eqref{eq:NSE1}-\eqref{eq:NSE3} has an analytic solution
\begin{align*}
u\in C([0,T_*),B^0_{\infty,\infty}(\bR^d)^d)
\\
\left(\textrm{resp. }\omega\in C([0,T_\omega),B^0_{\infty,\infty}(\bR^3)^3)\ \right)
\end{align*}
with the initial value $u_0\in B^0_{\infty,\infty}(\bR^d)$ (resp. $\omega_0\in B^0_{\infty,\infty}(\bR^3)\cap L^p(\bR^3)$) (all the other assumptions remain the same). The work required is similar and easier, we will omit the proof.
\end{remark}

Before proving the main theorems, we present two lemmas which are key results for constructing approximation sequences in $BMO$-spaces with proofs given in the appendix.

\begin{lemma}\label{le:semigplocalbmo}
Consider the equation $\partial_t u-\Delta u =0 $ in $\bR^d\times [0,\infty)$ with $u(0) =u_0\in bmo(\bR^d)$. There is a solution $u(t)$ and constant $C$ satisfying the estimate
\begin{align}
\underset{t>0}{\sup}\left(\|u(t)\|_{bmo} +t^{\frac{1}{2}}\|\nabla u(t)\|_{L^\infty} + t\|\nabla^2 u(t)\|_{L^\infty} + t\|\partial_t u\|_{L^\infty}\right)\le C\|u_0\|_{bmo}\ .
\end{align}
\end{lemma}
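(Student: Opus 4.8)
The plan is to deduce the local ($bmo$) version of Theorem~\ref{th:semigpBMO} from the homogeneous ($BMO$) version together with the obvious pointwise/$L^\infty$ control that is available because $bmo$ sits between $L^\infty$-like behavior at small scales and $BMO$ at large scales. More precisely, recall that $f\in bmo(\bR^d)$ means $f\in BMO(\bR^d)$ \emph{and} $\sup_{|Q|\le 1}\frac{1}{|Q|}\int_Q|f-f_Q|<\infty$, which by a standard averaging argument forces $\|f\|_{L^1_{\mathrm{loc}},\mathrm{unif}}:=\sup_{x}\int_{B_1(x)}|f|\lesssim\|f\|_{bmo}$. Thus $bmo\hookrightarrow L^1_{\mathrm{uloc}}$, and the heat semigroup acts nicely on $L^1_{\mathrm{uloc}}$.

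First I would split the estimate into two regimes. For the large-time (or rather, the genuinely $BMO$) part: since $bmo\hookrightarrow BMO$ with $\|u_0\|_{BMO}\le\|u_0\|_{bmo}$, Theorem~\ref{th:semigpBMO} immediately yields $t^{1/2}\|\nabla u(t)\|_{L^\infty}+t\|\nabla^2u(t)\|_{L^\infty}+t\|\partial_tu(t)\|_{L^\infty}\lesssim\|u_0\|_{BMO}\le\|u_0\|_{bmo}$ for \emph{all} $t>0$, so the three derivative terms on the left are already controlled with no further work. The only genuinely new quantity to bound is $\sup_{t>0}\|u(t)\|_{bmo}$, i.e. the \emph{local} mean-oscillation part of the norm (the global $BMO$ part of $\|u(t)\|_{bmo}$ being again handled by Theorem~\ref{th:semigpBMO}).

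For $\sup_{t>0}\|u(t)\|_{bmo}$ I would again separate $t\le 1$ from $t\ge 1$. When $t\ge1$, the local mean oscillation over cubes of sidelength $\le 1$ is dominated by $\|u(t)\|_{L^\infty}$ up to a constant, and $\|u(t)\|_{L^\infty}=\|e^{t\Delta}u_0\|_{L^\infty}\lesssim\|u_0\|_{L^1_{\mathrm{uloc}}}\lesssim\|u_0\|_{bmo}$ for $t\ge1$ by the decay of the heat kernel against the uniformly-locally-integrable datum (integrate $|G_t(x-y)||u_0(y)|$ by writing $\bR^d$ as a union of unit balls and using $\int_{|y|\ge R}G_t\lesssim$ summable tail). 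When $0<t\le1$, I would use that $e^{t\Delta}$ is essentially an averaging operator at scale $\sqrt t\le1$: for a cube $Q$ with $l(Q)\le1$ write
\begin{align*}
\frac{1}{|Q|}\int_Q\big|e^{t\Delta}u_0-(e^{t\Delta}u_0)_Q\big|
\le \frac{2}{|Q|}\int_Q\big|e^{t\Delta}u_0-(u_0)_{\tilde Q}\big|,
\end{align*}
where $\tilde Q$ is the dilate of $Q$ by a fixed factor so that $l(\tilde Q)\le C\le$ some fixed length; then split $u_0=(u_0-(u_0)_{\tilde Q})\1_{2\tilde Q}+(u_0-(u_0)_{\tilde Q})\1_{(2\tilde Q)^c}$. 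The first piece has $L^1$ norm $\lesssim|Q|\,\|u_0\|_{bmo}$ directly from the local BMO condition, and convolution with $G_t$ loses nothing in $L^1$; the second piece is estimated pointwise on $Q$ using the standard telescoping/John--Nirenberg tail bound $\int_{(2\tilde Q)^c}|G_t(x-y)|\,|u_0(y)-(u_0)_{\tilde Q}|\,dy\lesssim\|u_0\|_{BMO}$ uniformly in $x\in Q$ and $t\le1$ (this is exactly the computation behind $e^{t\Delta}:BMO\to BMO$, restricted to small scales where it is even easier). Combining, $\sup_{0<t\le1}\|u(t)\|_{bmo}\lesssim\|u_0\|_{bmo}$, which closes the remaining term.

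The main obstacle I anticipate is bookkeeping rather than depth: making the cube-dilation argument for $0<t\le1$ clean, in particular choosing the enlargement of $Q$ uniformly and handling the borderline case where $l(Q)$ is comparable to $1$ (so that "$\tilde Q$ has sidelength $\le1$" must be replaced by "$\le$ fixed constant", costing only a fixed factor in the local-BMO seminorm). An alternative, possibly shorter route would bypass the split entirely: combine the embedding $bmo\hookrightarrow B^0_{\infty,\infty}$ from Lemma~\ref{le:BMOEmbedBesov} with the Besov heat estimate \eqref{eq:HeatNonHomBesov} of Lemma~\ref{le:HolderEstBesov} to get $\|e^{t\Delta}u_0\|_{B^s_{\infty,\infty}}\lesssim(1+t^{-s/2})\|u_0\|_{bmo}$ for $s>0$, and then recover $\|u(t)\|_{bmo}$ and the $L^\infty$ derivative bounds from $B^0_{\infty,\infty}$ and $B^1_{\infty,\infty}$, $B^2_{\infty,\infty}$ controls; but since Theorem~\ref{th:semigpBMO} already gives the sharp derivative bounds, the direct kernel argument above is the most economical way to finish, and that is the one I would write up.
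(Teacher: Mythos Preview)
Your approach is sound but takes a genuinely different route from the paper. For the derivative terms you both reduce to Theorem~\ref{th:semigpBMO} via $bmo\hookrightarrow BMO$. For the remaining bound $\|e^{t\Delta}u_0\|_{bmo}\lesssim\|u_0\|_{bmo}$, the paper gives a three-line duality argument: writing $\langle e^{t\Delta}u_0,\phi\rangle=\langle u_0,G_t*\phi\rangle$ for $\phi$ in the local Hardy space $\mathfrak h^1$ (defined via the heat maximal function $\sup_{0<s<1}|G_s*\phi|$), one checks $\|G_t*\phi\|_{\mathfrak h^1}\le\|\phi\|_{\mathfrak h^1}$ from the pointwise inequality $\sup_{0<s<1}|G_s*G_t*\phi|\le G_t*\sup_{0<s<1}|G_s*\phi|$ together with $\|G_t\|_{L^1}=1$; the $\mathfrak h^1$--$bmo$ duality then closes the estimate with no time split and no cube decomposition. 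Your direct kernel route (split $t\lessgtr1$, enlarge cubes, telescoping tail) is more hands-on and avoids duality, at the cost of exactly the bookkeeping you anticipate. One small slip to fix: your stated definition of $bmo$ is off---the extra condition beyond $BMO$ is control of \emph{averages} $\sup_{|Q|\ge1}\frac{1}{|Q|}\int_Q|f|$, not of small-scale oscillations (which $BMO$ already bounds); you use the correct property anyway (hence $bmo\hookrightarrow L^1_{\mathrm{uloc}}$), but note that the large-cube average piece of $\|u(t)\|_{bmo}$ is not covered by Theorem~\ref{th:semigpBMO} and needs your $L^1_{\mathrm{uloc}}$ argument for $0<t\le1$ as well.
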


\begin{lemma}\label{le:RealAnalytic}
Let $\partial_t u-\Delta u= \nabla\cdot R(f_t)$ with $u(0)\in bmo(\bR^d)^d$, where $R$ is a Calder\'on-Zygmund operator (see \citet{Stein1993} for the definition of C.Z.O.) and $f_t$ is an analytic function for every $t\in(0,T)$ and has bounded-in-time $BMO$-norms, weighted in time; more precisely, there is a continuous function $a_0(t)\gtrsim t^{1-\epsilon}$ such that
\begin{align}\label{eq:LeRANonHomAsmp}
a_0(t)f_t\in C^\infty([\delta,T),C^\infty(\bR^d)^d)\cap L^\infty([0,T),BMO(\bR^d)^d)
\end{align}
where $\delta>0$ can be arbitrarily small. Then the function
\begin{align}\label{eq:DuhPrin}
u(t)=e^{t\Delta}u_0+ \int_0^t \nabla e^{(t-s)\Delta} R(f_s)\ ds
\end{align}
solves the equations in $\bR^d\times (0,T)$ and is real analytic for every $t\in(0,T)$. Moreover, for any $\delta>0$
\begin{align*}
a_1(t)u\in C^\infty([\delta,T),C^\infty(\bR^d)^d)\cap L^\infty([0,T),L^\infty(\bR^d)^d)\
\end{align*}
with some weight function $a_1(t)\gtrsim[\ln(e+1/t)]^{-1}$.
\end{lemma}

\medskip

\begin{proof}[Proof of Theorem~\ref{th:MainThm}]
We construct an approximating sequence as follows:
\begin{align*}
&u^{(0)}=0\ ,\quad \pi^{(0)}=0\ ,
\\
&\partial_t u^{(n)} -\Delta u^{(n)} = -\left(u^{(n-1)}\cdot \nabla\right)u^{(n-1)} - \nabla\pi^{(n-1)} +f\ ,
\\
&u^{(n)}(x,0)=u_0(x)\ ,\quad \nabla\cdot u^{(n)}=0\ ,
\\
&\Delta\pi^{(n)}=-\partial_j\partial_k\left(u_j^{(n)} u_k^{(n)}\right)\ .
\end{align*}
By induction and Lemma~\ref{le:RealAnalytic}, we know there are $a^{(n)}(t)$ such that
\begin{align*}
a^{(n)}(t)\cdot u^{(n)}\in C^\infty([\delta,T),C^\infty(\bR^d)^d)\cap L^\infty([0,T),L^\infty(\bR^d)^d)\
\end{align*}
for arbitrarily small $\delta$ and each $u^{(n)}(t)$ is real analytic for every $t$. Let $u^{(n)}(x,y,t)+iv^{(n)}(x,y,t)$ and $\pi^{(n)}(x,y,t)+i\rho^{(n)}(x,y,t)$ be the analytic extensions of $u^{(n)}$ and $\pi^{(n)}$ respectively. Inductively we have analytic extensions for all approximate solutions and the real and imaginary parts satisfy
\begin{align}
\partial_t u^{(n)}-\Delta u^{(n)} &= -\left(u^{(n-1)}\cdot \nabla\right)u^{(n-1)} + \left(v^{(n-1)}\cdot \nabla\right)v^{(n-1)} - \nabla\pi^{(n-1)} + f\ ,
\\
\partial_t v^{(n)}-\Delta v^{(n)} &= -\left(u^{(n-1)}\cdot \nabla\right)v^{(n-1)} - \left(v^{(n-1)}\cdot \nabla\right)u^{(n-1)} - \nabla\rho^{(n-1)} + g\ ,
\end{align}
where
\begin{align*}
\Delta \pi^{(n)} &= -\partial_j\partial_k\left(u^{(n)}_ju^{(n)}_k-v^{(n)}_jv^{(n)}_k\right),\qquad \Delta\rho^{(n)} = -2\partial_j\partial_k\left(u^{(n)}_jv^{(n)}_k\right)\ .
\end{align*}
Now define
\begin{align*}
U_\alpha^{(n)}(x,t)&= u^{(n)}(x,\alpha t,t), && \Pi_\alpha^{(n)}(x,t)= \pi^{(n)}(x,\alpha t,t), & F_\alpha(x,t)=f(x,\alpha t, t),
\\
V_\alpha^{(n)}(x,t)&= v^{(n)}(x,\alpha t,t), && R_\alpha^{(n)}(x,t)= \rho^{(n)}(x,\alpha t,t), & G_\alpha(x,t)=g(x,\alpha t, t),
\end{align*}
then the approximation scheme becomes (for simplicity we drop the subscript $\alpha$)
\begin{align*}
\partial_t U^{(n)}- \Delta U^{(n)} &= -\alpha\cdot \nabla V^{(n)}-\left(U^{(n-1)}\cdot \nabla\right)U^{(n-1)} + \left(V^{(n-1)}\cdot \nabla\right)V^{(n-1)} - \nabla\Pi^{(n-1)} + F\ ,
\\
\partial_t V^{(n)}- \Delta V^{(n)} &= -\alpha\cdot \nabla U^{(n)}-\left(U^{(n-1)}\cdot \nabla\right)V^{(n-1)} - \left(V^{(n-1)}\cdot \nabla\right)U^{(n-1)} - \nabla R^{(n-1)} + G\ ,
\\
\Delta \Pi^{(n)} &= -\partial_j\partial_k\left(U^{(n)}_jU^{(n)}_k-V^{(n)}_jV^{(n)}_k\right),\qquad \Delta R^{(n)} = -2\partial_j\partial_k\left(U^{(n)}_jV^{(n)}_k\right)\ .
\end{align*}
with initial conditions
\begin{align*}
U^{(n)}(x,0)=u_0(x),\qquad V^{(n)}(x,0)=0\qquad\textrm{for all }x\in\bR^d\ ,
\end{align*}
for which we have the following iteration:
\begin{align}
U^{(n)}(x,t)& =e^{t\Delta} u_0 - \int_0^te^{(t-s)\Delta}\left(U^{(n-1)}\cdot\nabla\right)U^{(n-1)} ds + \int_0^te^{(t-s)\Delta}\left(V^{(n-1)}\cdot\nabla\right)V^{(n-1)} ds \notag
\\
&\quad -\int_0^te^{(t-s)\Delta}\nabla\Pi^{(n-1)}ds + \int_0^te^{(t-s)\Delta} F\ ds - \int_0^te^{(t-s)\Delta}\alpha\cdot\nabla V^{(n)}ds\ , \label{eq:IterationU}
\\
V^{(n)}(x,t)& = - \int_0^te^{(t-s)\Delta}\left(U^{(n-1)}\cdot\nabla\right)V^{(n-1)} ds - \int_0^te^{(t-s)\Delta}\left(V^{(n-1)}\cdot\nabla\right)U^{(n-1)} ds \notag
\\
&\quad -\int_0^te^{(t-s)\Delta}\nabla R^{(n-1)}ds + \int_0^te^{(t-s)\Delta} G\ ds - \int_0^te^{(t-s)\Delta}\alpha\cdot\nabla U^{(n)}ds \label{eq:IterationV}
\end{align}
where
\begin{align*}
\Pi^{(n)}(x,t)& =-(\Delta)^{-1}\sum \partial_j\partial_k\left(U^{(n)}_jU^{(n)}_k-V^{(n)}_jV^{(n)}_k\right),
\\
R^{(n)}(x,t) &=-2(\Delta)^{-1}\sum \partial_j\partial_k\left(U^{(n)}_jV^{(n)}_k\right).
\end{align*}

We state that, for some $T$ depending only on $\|u_0\|_{bmo}$, $\|F\|_{bmo}$ and $\|G\|_{bmo}$, the sequences $U^{(n)}$, $V^{(n)}$ constructed as above have a common upper bound in the four types of function spaces: $B^0_{\infty,\infty}$, $bmo$, $\phi(t)L^\infty$ and $\psi(t) \dot B^1_{\infty,1}$. \textit{More precisely, our claim is: There exists $T$ such that for all $n$
\begin{align}
U^{(n)},V^{(n)} &\in C([0,T);bmo(\bR^d)^d)\ , \label{eq:UnVnBMO}
\\
\phi_1(t)U^{(n)},\phi_1(t)V^{(n)} &\in C([0,T);L^\infty(\bR^d)^d)\ , \label{eq:UnVnLinfty}
\\
\phi_2(t)U^{(n)},\phi_2(t)V^{(n)} &\in C([0,T);\dot B^1_{\infty,1}(\bR^d)^d) \label{eq:UnVnBesov}
\end{align}
where $\phi_1(t)$ is given in Theorem~\ref{th:MainThm} and $\phi_2(t)=t^{\frac{1}{2}}$. Moreover
\begin{align*}
&L_n:=\underset{t<T}{\sup}\ \phi_1(t)\|U^{(n)}\|_{L^\infty}+\underset{t<T}{\sup}\ \phi_1(t)\|V^{(n)}\|_{L^\infty}\ ,\quad L_n':=\underset{t<T}{\sup}\ \|U^{(n)}\|_{B^0_{\infty,\infty}}+\underset{t<T}{\sup}\ \|V^{(n)}\|_{B^0_{\infty,\infty}}\ ,
\\
&L_n'':=\underset{t<T}{\sup}\ \|U^{(n)}\|_{bmo}+\underset{t<T}{\sup}\ \|V^{(n)}\|_{bmo}\ ,
\quad L_n''':=\underset{t<T}{\sup}\ \phi_2(t)\|U^{(n)}\|_{\dot B^1_{\infty,1}}+\underset{t<T}{\sup}\ \phi_2(t)\|V^{(n)}\|_{\dot B^1_{\infty,1}}
\end{align*}
are all bounded by a constant only determined by $\|u_0\|_{bmo}$, $\|F\|_{bmo}$ and $\|G\|_{bmo}$.}

\textit{Proof of the claim:} At the initial step of the iteration, i.e.
\begin{align}
U^{(0)}(x,t)& =e^{t\Delta} u_0 -\int_0^te^{(t-s)\Delta} F\ ds - \int_0^te^{(t-s)\Delta}\alpha\cdot\nabla V^{(0)}ds\ , \label{eq:IniIterU}
\\
V^{(0)}(x,t)& = \int_0^te^{(t-s)\Delta} G\ ds - \int_0^te^{(t-s)\Delta}\alpha\cdot\nabla U^{(0)}ds\ , \label{eq:IniIterV}
\end{align}
we have the following chain of estimates which follows by Lemma~\ref{le:SemGpBMO} and Lemma~\ref{le:semigplocalbmo}:
\begin{align}
\|U^{(0)}\|_{bmo}& \lesssim \|e^{t\Delta}u_0\|_{bmo} + \int_0^t\| e^{(t-s)\Delta} F \|_{bmo}ds + \int_0^t\| e^{(t-s)\Delta} \alpha\cdot \nabla V^{(0)} \|_{bmo}ds \notag
\\
& \lesssim \|u_0\|_{bmo} + \int_0^t \|F\|_{bmo}ds + |\alpha|\int_0^t\| \nabla e^{(t-s)\Delta} V^{(0)} \|_{L^\infty}ds \notag
\\
& \lesssim \|u_0\|_{bmo} + t\ \underset{s<t}{\sup}\ \|F\|_{bmo} + |\alpha|\int_0^t(t-s)^{-1/2}\|V^{(0)} \|_{BMO}ds \notag
\\
& \lesssim \|u_0\|_{bmo} + t\ \underset{s<t}{\sup}\ \|F\|_{bmo} + |\alpha|t^{1/2}\underset{s<t}{\sup}\ \|V^{(0)} \|_{bmo}\ . \label{eq:bmoEstU0}
\end{align}
Similarly,
\begin{align}\label{eq:bmoEstV0}
\|V^{(0)}\|_{bmo} \lesssim\|u_0\|_{bmo} + t\ \underset{s<t}{\sup}\ \|G\|_{bmo} + |\alpha|t^{1/2}\underset{s<t}{\sup}\ \|U^{(0)}\|_{bmo}\ .
\end{align}
If we assume $\alpha$ is a vector such that $C|\alpha|t^{1/2}<1/2$ for all $t<T$ with some proper choice of $C$ according to the above estimations, then combining \eqref{eq:bmoEstU0} and \eqref{eq:bmoEstV0} gives
\begin{align}\label{eq:InitialBMO}
\underset{t<T}{\sup}\ \|U^{(0)}\|_{bmo}+\underset{t<T}{\sup}\ \|V^{(0)}\|_{bmo}\lesssim\|u_0\|_{bmo} + T \left(\underset{t<T}{\sup}\|F\|_{bmo}+\underset{t<T}{\sup}\|G\|_{bmo}\right)\ .
\end{align}
Since $bmo\hookrightarrow B^0_{\infty,\infty}$ (see Lemma~\ref{le:BMOEmbedBesov}), the above estimate implies
\begin{align}\label{eq:InitialBesov}
\underset{t<T}{\sup}\ \|U^{(0)}\|_{B^0_{\infty,\infty}}+\underset{t<T}{\sup}\ \|V^{(0)}\|_{B^0_{\infty,\infty}}\lesssim\|u_0\|_{bmo} + T \left(\underset{t<T}{\sup}\|F\|_{bmo}+\underset{t<T}{\sup}\|G\|_{bmo}\right)\ .
\end{align}
The estimations in ``$\phi_1(t)L^\infty$'' are similar: By taking the $L^\infty$-norm of \eqref{eq:IniIterU} and using the fact that $B^0_{\infty,1}\hookrightarrow L^\infty$ (see \citet{Kozono2003}), we obtain
\begin{align*}
\|U^{(0)}\|_{L^\infty}& \lesssim \|e^{t\Delta}u_0\|_{L^\infty} + \int_0^t\| e^{(t-s)\Delta} F \|_{L^\infty}ds + \int_0^t\| e^{(t-s)\Delta} \alpha\cdot \nabla V^{(0)} \|_{L^\infty}ds
\\
& \lesssim \|e^{t\Delta}u_0\|_{B^0_{\infty,1}} + \int_0^t\| e^{(t-s)\Delta}F \|_{B^0_{\infty,1}}ds + |\alpha|\int_0^t\| \nabla e^{(t-s)\Delta} V^{(0)} \|_{{B^0_{\infty,1}}}ds\ .
\end{align*}
Then, by Lemma~\ref{le:HolderEstBesov}
\begin{align*}
\|U^{(0)}\|_{L^\infty}& \lesssim \ln(e+1/t)\|u_0\|_{B^0_{\infty,\infty}} + \int_0^t \ln(e+1/(t-s))\|F\|_{B^0_{\infty,\infty}}ds
\\
&\qquad\qquad + |\alpha|\int_0^t (t-s)^{-\frac{1}{2}}\ln(e+1/(t-s))\| V^{(0)} \|_{{B^0_{\infty,\infty}}}ds
\\
& \lesssim \ln(e+1/t)\|u_0\|_{B^0_{\infty,\infty}} + t\psi_1(t)\ \underset{s<t}{\sup}\ \|F \|_{B^0_{\infty,\infty}} + |\alpha|t^{\frac{1}{2}}\psi_2(t) \| V^{(0)} \|_{B^0_{\infty,\infty}}\ .
\end{align*}
where $\psi_1, \psi_2$ are given explicitly by
\begin{align*}
\psi_1(t)=t^{-1}\int_0^t \ln(e+1/(t-s))ds\ ,\qquad \psi_2(t)=t^{-\frac{1}{2}}\int_0^t (t-s)^{-\frac{1}{2}}\ln(e+1/(t-s))ds\ .
%=t\ln\left(e+t^{-1}\right)+e^{-1}\ln(et+1)\ .
\end{align*}
Since $L^\infty\hookrightarrow bmo\hookrightarrow B^0_{\infty,\infty}$ (see Lemma~\ref{le:BMOEmbedBesov}) we end up with
\begin{align}\label{eq:InfEstU0}
\|U^{(0)}\|_{L^\infty} & \lesssim \ln(e+1/t)\|u_0\|_{bmo} + t\psi_1(t)\ \underset{s<t}{\sup}\ \|F \|_{bmo} + |\alpha|t^{\frac{1}{2}} \psi_2(t) \| V^{(0)} \|_{L^\infty}\ .
\end{align}
It follows from the same reasoning that
\begin{align}\label{eq:InfEstV0}
\|V^{(0)}\|_{L^\infty} \lesssim \ln(e+1/t)\|u_0\|_{bmo} + t\psi_1(t)\ \underset{s<t}{\sup}\ \|G \|_{bmo} + |\alpha|t^{\frac{1}{2}}\psi_2(t)  \| U^{(0)} \|_{L^\infty}\ .
\end{align}
In the rest of the proof we will always write $\phi_1(t)$ for $[\ln(e+1/t)]^{-1}$. If we assume $\alpha$ is such that $C|\alpha|t^{1/2}\psi_2(t) <1/2$ for all $t<T$ with some proper choice of $C$, then \eqref{eq:InfEstU0} and \eqref{eq:InfEstV0} imply that
\begin{align}\label{eq:InitialLinfty}
\underset{t<T}{\sup}\ \phi_1(t)\|U^{(0)}\|_{L^\infty}+\underset{t<T}{\sup}\ \phi_1(t) \|V^{(0)}\|_{L^\infty}\lesssim \|u_0\|_{bmo} + T\psi_3(T) \left(\underset{t<T}{\sup} \|F\|_{bmo}+\underset{t<T}{\sup} \|G\|_{bmo}\right)\ .
\end{align}
where $\psi_3(t)=\phi_1(t)\psi_1(t)$.
%\begin{align*}
%\|U_0\|_{B^0_{\infty,\infty}}& \lesssim \|e^{t\Delta}u_0\|_{B^0_{\infty,\infty}} + \int_0^t\| e^{(t-s)\Delta} F \|_{B^0_{\infty,\infty}}ds + \int_0^t\| e^{(t-s)\Delta} \alpha\cdot \nabla V_0 \|_{B^0_{\infty,\infty}}ds
%\\
%& \lesssim \|u_0\|_{B^0_{\infty,\infty}} + \int_0^t \|F\|_{B^0_{\infty,\infty}}ds + |\alpha|\int_0^t\| \nabla e^{(t-s)\Delta} V_0 \|_{L^\infty}ds
%\\
%& \lesssim \|u_0\|_{bmo} + t\ \underset{s<t}{\sup}\ \|F\|_{bmo} + |\alpha|\int_0^t(t-s)^{-1/2}\|V_0 \|_{bmo}ds
%\\
%& \lesssim \|u_0\|_{bmo} + t\ \underset{s<t}{\sup}\ \|F\|_{bmo} + |\alpha|t^{1/2}\underset{s<t}{\sup}\ \|V_0 \|_{bmo}
%\end{align*}
%thus for the choice of $\alpha$ such that $|\alpha|t^{1/2}\lesssim 1$, with \eqref{eq:InitialBMO} in mind, we see
%\begin{align}\label{eq:InitialBinftyinfty}
%\underset{t<T}{\sup}\ \|U_0\|_{B^0_{\infty,\infty}}+\underset{t<T}{\sup}\ \|V_0\|_{B^0_{\infty,\infty}}\lesssim\|u_0\|_{bmo} + T \left(\underset{t<T}{\sup}\|F\|_{L^\infty}+\underset{t<T}{\sup}\|G\|_{L^\infty}\right)
%\end{align}
For the estimations in $\dot B^1_{\infty,1}$: In virtue of \eqref{eq:HeatHomBesov}, we deduce
\begin{align*}
\|U^{(0)}\|_{\dot B^1_{\infty,1}}& \lesssim \|e^{t\Delta}u_0\|_{\dot B^1_{\infty,1}} + \int_0^t\| e^{(t-s)\Delta} F \|_{\dot B^1_{\infty,1}}ds + \int_0^t\| e^{(t-s)\Delta} \alpha\cdot \nabla V^{(0)} \|_{\dot B^1_{\infty,1}}ds
\\
& \lesssim t^{-1/2}\|u_0\|_{\dot B^0_{\infty,\infty}} + \int_0^t (t-s)^{-\frac{1}{2}}\|F \|_{\dot B^0_{\infty,\infty}}ds + |\alpha|\int_0^t\left(t-s\right)^{-\frac{1}{2}}\| \nabla V^{(0)} \|_{{\dot B^0_{\infty,\infty}}}ds\ .
\end{align*}
Since $\dot B^1_{\infty,1}\hookrightarrow \dot B^0_{\infty,\infty}$ (trivial fact by the definitions) and $bmo\hookrightarrow BMO \hookrightarrow \dot B^0_{\infty,\infty}$ (see Lemma~\ref{le:BMOEmbedBesov})
\begin{align*}
\|U^{(0)}\|_{\dot B^1_{\infty,1}}& \lesssim t^{-1/2}\|u_0\|_{BMO} + \int_0^t (t-s)^{-\frac{1}{2}}\| F \|_{BMO}ds + |\alpha|\int_0^t (t-s)^{-\frac{1}{2}}\|\nabla V^{(0)} \|_{\dot B^0_{\infty,1}}ds
\\
& \lesssim t^{-1/2}\|u_0\|_{bmo} + t^{\frac{1}{2}}\ \underset{s<t}{\sup}\ \|F \|_{bmo} + |\alpha|\ \underset{s<t}{\sup}\ s^{\frac{1}{2}}\|V^{(0)} \|_{\dot B^1_{\infty,1}}\int_0^t (t-s)^{-\frac{1}{2}}s^{-\frac{1}{2}}ds\ .
\end{align*}
Thus, for all $t<T$,
\begin{align}
t^{\frac{1}{2}}\|U^{(0)}\|_{\dot B^1_{\infty,1}}& \lesssim \|u_0\|_{bmo} + t\ \underset{s<t}{\sup}\ \|F \|_{bmo} + |\alpha|t^{\frac{1}{2}}\ \underset{s<t}{\sup}\ s^{\frac{1}{2}}\|V^{(0)} \|_{\dot B^1_{\infty,1}}\ .
\end{align}
Similarly, for all $t<T$,
\begin{align}
t^{\frac{1}{2}}\|V^{(0)}\|_{\dot B^1_{\infty,1}}& \lesssim \|u_0\|_{bmo} + t\ \underset{s<t}{\sup}\ \|F \|_{bmo} + |\alpha|t^{\frac{1}{2}}\ \underset{s<t}{\sup}\ s^{\frac{1}{2}}\|U^{(0)} \|_{\dot B^1_{\infty,1}}\ .
\end{align}
Again, choosing $\alpha$ properly, combination of the above two estimates shows that
\begin{align}\label{eq:InitialBinfty1}
\underset{t<T}{\sup}\ t^{\frac{1}{2}} \|U^{(0)}\|_{\dot B^1_{\infty,1}}+\underset{t<T}{\sup}\ t^{\frac{1}{2}}\|V^{(0)}\|_{\dot B^1_{\infty,1}}\lesssim\|u_0\|_{bmo} + T \left(\underset{t<T}{\sup}\|F\|_{bmo}+\underset{t<T}{\sup}\|G\|_{bmo}\right)\ .
\end{align}
To sum up the estimates \eqref{eq:InitialBMO}, \eqref{eq:InitialBesov}, \eqref{eq:InitialLinfty} and \eqref{eq:InitialBinfty1} we've shown that
\begin{align*}
M_0:=\max\{L_0,L_0',L_0'',L_0'''\}\lesssim \|u_0\|_{bmo} + T\psi_3(T)  \left(\underset{t<T}{\sup}\|F\|_{bmo}+\underset{t<T}{\sup}\|G\|_{bmo}\right)\ .
\end{align*}
To control the rest of $M_n$'s in the iteration scheme, estimations for the nonlinear terms play an essential role. In the following we will demonstrate the idea with the terms such that
\begin{align*}
\int_0^t e^{(t-s)\Delta}(U^{(n)}\cdot \nabla)U^{(n)}ds\ , \quad \int_0^t e^{(t-s)\Delta}(U^{(n)}\cdot \nabla)V^{(n)}ds\ , \quad \int_0^te^{(t-s)\Delta}\nabla\Pi^{(n)}ds\ , \quad ...
\end{align*}
in those four spaces (we won't write down details of the estimation for each term in each function space because some of the ideas and techniques are similar). First we will derive estimation in $L^\infty$; we will see the results for the other spaces essentially follow from it.

Recall that $\nabla\cdot U^{(n)}=0$, so $(U^{(n)}\cdot\nabla) U^{(n)}=\nabla(U^{(n)}\otimes U^{(n)})$. For induction hypothesis, we suppose \eqref{eq:UnVnBMO}-\eqref{eq:UnVnBesov} holds true for $n$. Then, by Lemma~\ref{le:SemGpBMO}
\begin{align*}
\left\|\int_0^t e^{(t-s)\Delta}(U^{(n)}\cdot \nabla)U^{(n)}ds\right\|_{L^\infty}&\lesssim\int_0^t \|e^{(t-s)\Delta}\nabla (U^{(n)}\otimes U^{(n)})\|_{L^\infty}ds
\\
&\lesssim\int_0^t \|\nabla e^{(t-s)\Delta} (U^{(n)}\otimes U^{(n)})\|_{L^\infty}ds
\\
&\lesssim\int_0^t (t-s)^{-1/2}\|U^{(n)}\otimes U^{(n)}\|_{BMO}ds
\\
&\lesssim\int_0^t (t-s)^{-1/2}\|U^{(n)}\otimes U^{(n)}\|_{L^\infty}ds
\\
&\lesssim \left(\underset{s<t}{\sup}\ \phi_1(s)\|U^{(n)}\|_{L^\infty}\right)^2\int_0^t (t-s)^{-1/2}(\phi_1(s))^{-2}ds\ .
\end{align*}
Note that the estimates for $V^{(n)}$ follow in the same way, thus we have
\begin{align}
\phi_1(t)\left\|\int_0^t e^{(t-s)\Delta}(U^{(n)}\cdot \nabla)U^{(n)}ds\right\|_{L^\infty}&\lesssim t^{\frac{1}{2}}\psi_4(t)\left(\underset{s<t}{\sup}\ \phi_1(s)\|U^{(n)}\|_{L^\infty}\right)^2\ ,\label{eq:UnLinftyUnNL}
\\
\phi_1(t)\left\|\int_0^t e^{(t-s)\Delta}(V^{(n)}\cdot \nabla)V^{(n)}ds\right\|_{L^\infty}&\lesssim t^{\frac{1}{2}}\psi_4(t)\left(\underset{s<t}{\sup}\ \phi_1(s)\|V^{(n)}\|_{L^\infty}\right)^2 \label{eq:UnLinftyVnNL}
\end{align}
where
\begin{align*}
\psi_4(t)=t^{-\frac{1}{2}}\phi_1(t)\int_0^t (t-s)^{-1/2}(\phi_1(s))^{-2}ds\ .
\end{align*}
The bound for the pressure term is obtained in a similar way: For induction hypothesis, we suppose \eqref{eq:UnVnBMO}-\eqref{eq:UnVnBesov} hold true for $n$. For convenience we will denote by $P$ the projection operator, i.e.
\begin{align*}
P(f\otimes g):=-(\Delta)^{-1}\sum \partial_j\partial_k(f_j\cdot g_k)\ .
\end{align*}
Since $\|\nabla f\|_\infty\lesssim\|\nabla f\|_{\dot B^0_{\infty,1}}$ if $f\in BMO$ and $\nabla f\in\dot B^0_{\infty,1}$ (see \citet{Kozono2003}), we have
\begin{align*}
\left\|\int_0^te^{(t-s)\Delta}\nabla\Pi^{(n)}ds\right\|_{L^\infty}&\lesssim\int_0^t \left\|\nabla e^{(t-s)\Delta} P\left(U^{(n)}\otimes U^{(n)}-V^{(n)}\otimes V^{(n)}\right)\right\|_{L^\infty}ds
\\
&\lesssim\int_0^t \left\|\nabla e^{(t-s)\Delta} P\left(U^{(n)}\otimes U^{(n)}-V^{(n)}\otimes V^{(n)}\right)\right\|_{\dot B^0_{\infty,1}}ds\ .
\end{align*}
Note that $P$ is a bounded operator from $\dot B^0_{\infty,\infty}$ into itself (see \citet{Lemarie1985} and \citet{Han1993}). Then, by Lemma~\ref{eq:HeatHomBesovLog} and Lemma~\ref{le:BMOEmbedBesov} it follows that
\begin{align*}
\left\|\int_0^te^{(t-s)\Delta}\nabla\Pi^{(n)}ds\right\|_{L^\infty}&\lesssim\int_0^t (t-s)^{-1/2}\left\|P\left(U^{(n)}\otimes U^{(n)}-V^{(n)}\otimes V^{(n)}\right)\right\|_{\dot B^0_{\infty,\infty}}ds
\\
&\lesssim\int_0^t (t-s)^{-1/2}\left\|U^{(n)}\otimes U^{(n)}-V^{(n)}\otimes V^{(n)}\right\|_{\dot B^0_{\infty,\infty}}ds
\\
&\lesssim\int_0^t (t-s)^{-1/2}\left(\|U^{(n)}\otimes U^{(n)}\|_{L^\infty}+\|V^{(n)}\otimes V^{(n)}\|_{L^\infty}\right)ds
\\
&\lesssim \left(\left(\underset{s<t}{\sup}\ \phi_1(s)\|U^{(n)}\|_{L^\infty}\right)^2 + \left(\underset{s<t}{\sup}\ \phi_1(s)\|V^{(n)}\|_{L^\infty}\right)^2\right)
\\
& \qquad\qquad\qquad \times \int_0^t (t-s)^{-1/2}(\phi_1(s))^{-2}ds\ .
\end{align*}
Thus
\begin{align}\label{eq:UnLinftyPsr}
\phi_1(t)\left\|\int_0^te^{(t-s)\Delta}\nabla\Pi^{(n)}ds\right\|_{L^\infty}&\lesssim t^{\frac{1}{2}}\psi_4(t)\left(\left(\underset{s<t}{\sup}\ \phi_1(s)\|U^{(n)}\|_{L^\infty}\right)^2 + \left(\underset{s<t}{\sup}\ \phi_1(s)\|V^{(n)}\|_{L^\infty}\right)^2\right)\ .
\end{align}
Following the same argument as in the estimation for $\|U^{(0)}\|_{L^\infty}$, we obtain
\begin{align}
&\left\|\int_0^t e^{(t-s)\Delta} \alpha\cdot \nabla V^{(n+1)} ds\right\|_{L^\infty} \lesssim |\alpha|t^{\frac{1}{2}} \psi_2(t) \| V^{(n+1)} \|_{L^\infty}\ , \label{eq:UnLinftyVnL}
\\
&\left\|\int_0^t e^{(t-s)\Delta} F\ ds\right\|_{L^\infty} \lesssim t\psi_1(t)\ \underset{s<t}{\sup}\ \|F \|_{bmo}\ ,\quad \|e^{t\Delta}u_0\|_{L^\infty}\lesssim \phi_1(t)^{-1}\|u_0\|_{bmo}\ . \label{eq:UnLinftyForce}
\end{align}
In summation, \eqref{eq:IterationU}, together with \eqref{eq:UnLinftyUnNL}-\eqref{eq:UnLinftyForce}, shows
\begin{align}\label{eq:UnLinfty}
\underset{t<T}{\sup}\ \phi_1(t)\|U^{(n)}\|_{L^\infty}\lesssim \|u_0\|_{bmo} + T^{\frac{1}{2}}\psi_4(T)\ (L_n)^2 + T\psi_3(T)\ \underset{t<T}{\sup}\|F\|_{bmo} + |\alpha|t^{\frac{1}{2}} \psi_2(t) L_{n+1}\ .
\end{align}
The estimations in ``$\phi_1(t)L^\infty$'' are similar but with special attention to ``the mixed terms'': With the induction hypothesis $V^{(n)}\in \dot B^1_{\infty,1}$ we have
\begin{align*}
\left\|\int_0^t e^{(t-s)\Delta}(U^{(n)}\cdot \nabla)V^{(n)}ds\right\|_{L^\infty}&\lesssim \int_0^t \|U^{(n)}\|_{L^\infty} \|e^{(t-s)\Delta} |\nabla V^{(n)}|\|_{L^\infty}ds
\\
&\lesssim\int_0^t \|U^{(n)}\|_{L^\infty} \|\nabla V^{(n)}\|_{\dot B^0_{\infty,1}}ds
\\
&\lesssim\int_0^t \|U^{(n)}\|_{L^\infty}\|V^{(n)}\|_{\dot B^1_{\infty,1}}ds
\\
&\lesssim \left(\underset{s<t}{\sup}\ \phi_1(s)\|U^{(n)}\|_{L^\infty}\right)\left(\underset{s<t}{\sup}\ s^{\frac{1}{2}}\|V^{(n)}\|_{\dot B^1_{\infty,1}}\right)
\\
&\qquad\qquad\qquad\times\int_0^t s^{-\frac{1}{2}}\left(\phi_1(s)\right)^{-1}ds\ .
\end{align*}
Thus
\begin{align*}
\phi_1(t)\left\|\int_0^t e^{(t-s)\Delta}(U^{(n)}\cdot \nabla)V^{(n)}ds\right\|_{L^\infty}&\lesssim t^{\frac{1}{2}}\phi_1(t)\psi_2(t) L_n L_n'''\ .
\end{align*}
Similar to the deduction for \eqref{eq:UnLinftyPsr}, we have
\begin{align*}
\phi_1(t)\left\|\int_0^te^{(t-s)\Delta}\nabla R^{(n)}ds\right\|_{L^\infty}&\lesssim \phi_1(t)\int_0^t (t-s)^{-1/2}\left\|P\left(U^{(n)}\otimes V^{(n)}\right)\right\|_{\dot B^0_{\infty,\infty}}ds
\\
&\lesssim \phi_1(t)\int_0^t (t-s)^{-1/2}\left\|U^{(n)}\otimes V^{(n)}\right\|_{L_\infty}ds
\\
&\lesssim t^{\frac{1}{2}}\psi_4(t)\left(\underset{s<t}{\sup}\ \phi_1(s)\|U^{(n)}\|_{L^\infty}\right) \left(\underset{s<t}{\sup}\ \phi_1(s)\|V^{(n)}\|_{L^\infty}\right)\ .
\end{align*}
With the above estimates at hand, we have from \eqref{eq:IterationV} that
\begin{align}\label{eq:VnLinfty}
\underset{t<T}{\sup}\ \phi_1(t)\|V^{(n)}\|_{L^\infty}&\lesssim  T^{\frac{1}{2}}\psi_4(T)\ (L_n)^2 + T^{\frac{1}{2}}\psi_5(T) L_n L_n''' \notag
\\
&\qquad\quad + T\psi_3(T)\ \underset{t<T}{\sup}\|G\|_{bmo} + |\alpha|t^{\frac{1}{2}} \psi_2(t) L_{n+1}\ .
\end{align}
So, with $\alpha$ such that $C|\alpha|t^{1/2}\psi_2(t) <1/2$ for all $t<T$ with some constant $C$, \eqref{eq:UnLinfty} and \eqref{eq:VnLinfty} imply that
\begin{align}
L_{n+1} &\lesssim \|u_0\|_{bmo} + T\psi_3(T)\left(\underset{t<T}{\sup}\|F\|_{bmo}+\underset{t<T}{\sup}\|G\|_{bmo}\right) \notag
\\
&\qquad\qquad\qquad\quad + T^{\frac{1}{2}}\psi_4(T)\ (L_n)^2 + T^{\frac{1}{2}}\phi_1(T)\psi_2(T) L_n L_n''' \ . \label{eq:IterBddLinfty}
\end{align}
Recall that $L^\infty\hookrightarrow bmo \hookrightarrow B^0_{\infty,\infty}$, it follows immediately
\begin{align}
L_{n+1}' &\lesssim \|u_0\|_{bmo} + T\psi_1(T)\left(\underset{t<T}{\sup}\|F\|_{bmo}+\underset{t<T}{\sup}\|G\|_{bmo}\right) \notag
\\
&\qquad\qquad\qquad\quad + T^{\frac{1}{2}}\psi_4(T)\left(\phi_1(T)\right)^{-1} (L_n)^2+ T^{\frac{1}{2}}\psi_2(T) L_n L_n''' \ ,
\\
L_{n+1}'' &\lesssim \|u_0\|_{bmo} + T\psi_1(T)\left(\underset{t<T}{\sup}\|F\|_{bmo}+\underset{t<T}{\sup}\|G\|_{bmo}\right) \notag
\\
&\qquad\qquad\qquad\quad + T^{\frac{1}{2}}\psi_4(T)\left(\phi_1(T)\right)^{-1} (L_n)^2+ T^{\frac{1}{2}}\psi_2(T) L_n L_n''' \ .
\end{align}
For the estimations in $\dot B^1_{\infty,1}$, due to large amount of overlap with those previous arguments, it suffices to demonstrate the followings: By Lemma~\ref{le:BMOEmbedBesov} and the boundedness of the projection $P$ in $\dot B^0_{\infty,\infty}$ with the fact that $U^{(n)}, V^{(n)}$ are divergence free,
\begin{align*}
\left\|\int_0^te^{(t-s)\Delta}\nabla R^{(n)}ds\right\|_{\dot B^1_{\infty,1}}&\lesssim \int_0^t \left\|e^{(t-s)\Delta}\nabla P\left(U^{(n)}\otimes V^{(n)}\right)\right\|_{\dot B^1_{\infty,1}}ds
\\
&\lesssim \int_0^t (t-s)^{-1/2}\left\|P\left(\nabla(U^{(n)}\otimes V^{(n)})\right)\right\|_{\dot B^0_{\infty,\infty}}ds
\\
&\lesssim \int_0^t (t-s)^{-1/2}\left\|(U^{(n)}\cdot \nabla)V^{(n)}+(V^{(n)}\cdot \nabla)U^{(n)}\right\|_{\dot B^0_{\infty,\infty}}ds\ .
\end{align*}
Then, with the induction hypothesis $U^{(n)}, V^{(n)}\in \dot B^1_{\infty,1}$, it follows that
\begin{align*}
\left\|\int_0^te^{(t-s)\Delta}\nabla R^{(n)}ds\right\|_{\dot B^1_{\infty,1}}
&\lesssim \int_0^t (t-s)^{-\frac{1}{2}}\left(\|U^{(n)}\|_{L^\infty}\|\nabla V^{(n)}\|_{L^\infty} + \|V^{(n)}\|_{L^\infty}\|\nabla U^{(n)}\|_{L^\infty}\right)ds
\\
&\lesssim \int_0^t (t-s)^{-\frac{1}{2}}\left(\|U^{(n)}\|_{L^\infty}\|\nabla V^{(n)}\|_{\dot B^0_{\infty,1}} + \|V^{(n)}\|_{L^\infty}\|\nabla U^{(n)}\|_{\dot B^0_{\infty,1}}\right)ds
\\
&\lesssim \psi_5(t)\left(\underset{s<t}{\sup}\ \phi_1(s)\|U^{(n)}\|_{L^\infty}\right) \left(\underset{s<t}{\sup}\ s^{\frac{1}{2}}\|V^{(n)}\|_{L^\infty}\right)
\\
&\qquad\qquad\qquad + \psi_5(t)\left(\underset{s<t}{\sup}\ \phi_1(s)\|V^{(n)}\|_{L^\infty}\right) \left(\underset{s<t}{\sup}\ s^{\frac{1}{2}}\|U^{(n)}\|_{L^\infty}\right)\
\end{align*}
where
\begin{align*}
\psi_5(t)=\int_0^t s^{-\frac{1}{2}} (t-s)^{-\frac{1}{2}} \left(\phi_1(s)\right)^{-1}ds\ .
\end{align*}
The bounds in $\dot B^1_{\infty,1}$ for all other nonlinear terms follow in a similar fashion. So
\begin{align}\label{eq:IterBddBinfty1}
L_{n+1}'''\lesssim_\epsilon \|u_0\|_{bmo} + T \left(\underset{t<T}{\sup}\|F\|_{bmo}+\underset{t<T}{\sup}\|G\|_{bmo}\right) + T^{\frac{1}{2}}\psi_5(T)\ L_n L_n'''\ .
\end{align}
In conclusion, \eqref{eq:IterBddLinfty}-\eqref{eq:IterBddBinfty1} yield uniform bound for all the function spaces:
\begin{align*}
M_{n+1}&:=\max\{L_{n+1},L_{n+1}',L_{n+1}'',L_{n+1}'''\}
\\
&\lesssim \|u_0\|_{bmo} + T\Psi_1(T) \left(\underset{t<T}{\sup}\|F\|_{bmo}+\underset{t<T}{\sup}\|G\|_{bmo}\right) + T^{\frac{1}{2}}\Psi_2(T)  (M_n)^2
\end{align*}
where
\begin{align*}
\Psi_1(t)=\max\{1,\psi_1(t),\psi_3(t)\}\ ,\qquad \Psi_2(t)=\max\{\psi_2,\psi_4,\psi_5,\phi_1\psi_2,\phi_1^{-1}\psi_4\}\ .
\end{align*}
If we take $T_*$ such that
\begin{align*}
T_*^{\frac{1}{2}}\Psi_2(T_*)\le \frac{1}{C\left(\|u_0\|_{bmo} + T_*\Psi_1(T_*) \left(\underset{t<T_*}{\sup}\|F\|_{bmo}+\underset{t<T_*}{\sup}\|G\|_{bmo}\right)\right)}
\end{align*}
where the constant $C$ was generated in our iteration scheme, independent of $T$, $u_0$, $F$ and $G$. Then all the sequences are bounded by $\left(2C T_*^{\frac{1}{2}}\Psi_2(T_*)\right)^{-1}$. This completes the proof of the claim.

Now the standard converging argument with Lemma~\ref{le:Montel} (applied for each $t$ with $p=\infty$) shall complete the proof that the limit function $u$ (i.e. the complexified solution of the NSE \eqref{eq:NSE1}-\eqref{eq:NSE3}) exists and it is bounded locally uniformly in time (the time interval only depends on $\|u_0\|_{bmo}$, $\|F\|_{bmo}$ and $\|G\|_{bmo}$) and uniformly in $y$-variables over the complex domain
\begin{align*}
\cD_t=: \left\{(x,y)\in\bC^d\ \big|\ |y|\le \min\{ct^{1/2}\min\{1,\psi_2(t)^{-1}\},\delta_f\}\right\}\
\end{align*}
in any of the four spaces (over $x\in\bR^d$) with the upper bound only depending on $\|u_0\|_{bmo}$, $\|F\|_{bmo}$ and $\|G\|_{bmo}$.  The analyticity properties of $u$, i.e. the existence of the higher order space derivatives is justified by the uniform convergence on any compact subset of $\cD_t$, following from Lemma~\ref{le:Montel} (see \citet{Grujic1998} for more details). This ends the proof of the theorem.
\end{proof}

In fact, by the existence of space derivatives and using the equation \eqref{eq:NSE1} (again with uniform convergence on compact subset in the complex space), it follows that the classical derivatives in time also exist. Therefore, we have

\begin{corollary}
The solution $u(t)$ stated in Theorem~\ref{th:MainThm} is the classical solution of \eqref{eq:NSE1}-\eqref{eq:NSE3}.
\end{corollary}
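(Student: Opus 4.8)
The plan is to bootstrap from the spatial regularity already established in Theorem~\ref{th:MainThm} to full space-time smoothness on the open cylinder $Q_{T_*}:=\bR^d\times(0,T_*)$, using the equation itself to trade one time derivative for two space derivatives. Throughout I would recover the pressure from the velocity as the limit of the $\Pi^{(n)}$ used in the iteration, $\nabla p=\nabla(-\Delta)^{-1}\sum_{j,k}\partial_j\partial_k(u_ju_k)$; since $u\otimes u$ is real-analytic in $x$ and lies in $L^\infty_{\mathrm{loc}}((0,T_*),bmo)$ (being the product of such functions, using $L^\infty\hookrightarrow bmo$ from Lemma~\ref{le:BMOEmbedBesov}), Calder\'on--Zygmund theory gives that $\nabla p$ is well-defined, real-analytic in $x$, and jointly continuous on $Q_{T_*}$.

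First I would record the precise interior regularity of $u$. Fix $0<\tau<T_*$ and a compact $K\subset\bR^d$; restarting the mild formulation at $t_1=\tau/2$ gives, for $t\in[\tau,T_*)$,
\begin{align*}
u(t)=e^{(t-t_1)\Delta}u(t_1)+\int_{t_1}^t e^{(t-s)\Delta}\,\bP\big(f(s)-(u\cdot\nabla)u(s)\big)\,ds,
\end{align*}
where $u(t_1)\in bmo$ and, by the claim established inside the proof of Theorem~\ref{th:MainThm}, $u$ and $\nabla u$ are bounded in $L^\infty$ on $[t_1,T_*)$. Applying Lemma~\ref{th:semigpBMO} and Lemma~\ref{le:SemGpBMO} (with $\alpha=k/2$) to the two terms yields uniform bounds on $\partial_x^\alpha u$ over $K\times[\tau,T_*-\tau]$ for every multi-index $\alpha$; together with analyticity in $x$ and the continuity $u\in C((0,T_*),L^\infty)$ inherited from $\phi_1(t)u\in C([0,T_*),L^\infty)$, this shows every $\partial_x^\alpha u$ is jointly continuous on $Q_{T_*}$. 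Hence $\Delta u$, $(u\cdot\nabla)u=\nabla\cdot(u\otimes u)$, $\nabla p$ and $f$ are all continuous functions on $Q_{T_*}$.

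Next, since $u$ is a mild solution, Duhamel's principle for distributions (as in the proof of Lemma~\ref{le:RealAnalytic}) gives $\partial_t u-\Delta u=\bP\big(f-(u\cdot\nabla)u\big)$ in $\mathscr{S}'(Q_{T_*})$, that is,
\begin{align*}
\partial_t u=\Delta u-(u\cdot\nabla)u-\nabla p+f\qquad\text{in }\mathscr{S}'(Q_{T_*}).
\end{align*}
The right-hand side is, by the previous step, a continuous function on $Q_{T_*}$; a distribution that is itself continuous and whose distributional $t$-derivative agrees with a continuous function is classically differentiable in $t$, with $\partial_t u$ equal to that function. Differentiating this identity in $x$ and iterating (the right-hand side of each differentiated equation is again continuous by the previous step) upgrades $u$ to $C^\infty(Q_{T_*})$ in the space-time variables, so $(u,p)$ satisfies \eqref{eq:NSE1}--\eqref{eq:NSE3} pointwise on $Q_{T_*}$, while the initial condition is attained in the $bmo$ sense by Theorem~\ref{th:MainThm}; hence $u$ is a classical solution.

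The step I expect to be the main obstacle is justifying the joint space-time continuity (and the interior $C^k$ bounds) of $u$ and its space derivatives, since the normal-family argument behind Theorem~\ref{th:MainThm} a priori yields only, for each fixed $t$, convergence of a subsequence of $\{u^{(n)}\}$ on compact subsets of $\cD_t$. The clean way around this is to feed the uniform bounds $M_n$ through the linear heat estimates exactly as above, so that $\{u^{(n)}\}$ --- and hence, via the approximate equations, $\{\partial_t u^{(n)}\}$ and all $\{\partial_x^\alpha u^{(n)}\}$ --- is equibounded and equicontinuous on every compact subset of $Q_{T_*}$; Arzel\`a--Ascoli together with a diagonal argument then produces a subsequence converging in $C^\infty_{\mathrm{loc}}(Q_{T_*})$, necessarily to $u$, so one may pass to the limit termwise in the iteration scheme and read off the classical equation directly. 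Alternatively, one checks by dominated convergence that $u$ satisfies the integral equation of Lemma~\ref{le:RealAnalytic} with forcing $f-(u\cdot\nabla)u-\nabla p$ and invokes that lemma, whose conclusion $a_1(t)u\in C^\infty([\delta,T),C^\infty(\bR^d)^d)$ already encodes smoothness in time.
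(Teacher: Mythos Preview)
Your proposal is correct and follows essentially the same approach as the paper: the paper's argument (given in the sentence immediately preceding the corollary) is simply that the existence of all space derivatives together with the equation \eqref{eq:NSE1} and the uniform convergence of the approximants on compact subsets yields the classical time derivative. Your write-up spells out this mechanism in considerably more detail --- the restart-at-$t_1$ bootstrap, the distributional-to-classical upgrade of $\partial_t u$, and the Arzel\`a--Ascoli/normal-family justification of joint continuity --- but the underlying idea and the key ingredients (analyticity $\Rightarrow$ space derivatives, equation $\Rightarrow$ time derivative, uniform convergence on compacta $\Rightarrow$ passage to the limit) are the same.
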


\bigskip

The following result is a type of sharp $L^\infty$-estimates for the heat semigroup convolved with Calder\'on-Zygmund operators, as stated and proved in a more general setup, being prepared only for the proof of Theorem~\ref{th:MainThmVor} (as well as for some follow-up questions in future).
\begin{lemma}\label{le:HeatSmGpLfty}
Let $T$ be a Calder\'on-Zygmund operator (see \citet{Stein1993} for the definition of C.Z.O.) with symmetric kernel $K(\cdot,\cdot)$ satisfying
\begin{align}\label{eq:RadialCancel}
\int_{\bS^d}K(x,z)d\sigma(z)=\int_{\bS^d}K(z,y)d\sigma(z)=0\quad\textrm{ for all }x,y
\end{align}
where $\bS^d$ denotes the unit sphere centered at $x$ or $y$. And let $\Phi\in L\log L(\bR^d)$ be a non-negative, radially symmetric and radially decreasing function. Given $k>0$, there exists a number $T^*$ (which only depends on $k$) such that, for any $t<T^*$ and for any function $g$ such that $|g|\le \Phi$ and any $f\in L^\infty(\bR^d)\cap L^p(\bR^d)$ for some $1\le p<\infty$, we have
\begin{align}
\left\|g_t*|Tf|^k\right\|_{L^\infty}\lesssim_{\Phi,p,k} \Psi_k(t)\left(\|f\|_{L^\infty}^k+\|f\|_{L^p}^k+\|f\|_{L^\infty}^{k\alpha}\|f\|_{L^p}^{k(1-\alpha)}\right)
\end{align}
for some $\alpha$, where $g_t(x):=t^{-d}g(x/t)$ and $\Psi_k(t)$ grows logarithmically as $t\to0^+$.
\end{lemma}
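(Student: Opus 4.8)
The plan is to pit two features of $h:=Tf$ against each other. On one side, $f\in L^\infty$ together with $T$ being a Calder\'on--Zygmund operator gives $\|h\|_{BMO}\lesssim\|f\|_{L^\infty}$, so by the John--Nirenberg inequality $h$ is \emph{locally exponentially integrable}; convolving with $\Phi_t$, where $\Phi$ is radially decreasing and in $L\log L$, is exactly the operation dual to this, and is where the (poly\nobreakdash-)logarithmic factor $\Psi_k(t)$ will come from. On the other side, $f\in L^p\cap L^\infty\subset L^{q}$ for every $q\in[p,\infty]$, so $h\in L^{q}$ for $1<q<\infty$ with $\|h\|_{L^{q}}\lesssim\|f\|_{L^\infty}^{1-p/q}\|f\|_{L^p}^{p/q}$; this makes $h$ a genuine function (so that $|h|^k$ is meaningful), controls its averages and its far field, and is what produces the mixed‑norm term. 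Fix once and for all $q_0:=\max\{p,k,2\}+1$ and $\alpha:=1-p/q_0\in[0,1)$. Since $|g|\le\Phi$ implies $|g_t*|h|^k|\le\Phi_t*|h|^k$, it is enough to bound $\Phi_t*|h|^k$ pointwise, uniformly in the base point $x$.

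Fix $x$, choose a cut‑off radius $\rho=\rho(t)$ which is $\le1$ and goes to $0$ no faster than $(\log(1/t))^{-p/(dk)}$ (a constant $\rho$ is admissible if that suffices), and split $f=f_1+f_2$ with $f_1=f\chi_{B(x,2\rho)}$; since $k>0$, $\Phi_t*|h|^k(x)\lesssim_k\Phi_t*|Tf_1|^k(x)+\Phi_t*|Tf_2|^k(x)$. \emph{Far piece.} For $y\in B(x,\rho)$ and $z\notin B(x,2\rho)$ one has $|y-z|\sim|x-z|\ge\rho$, so Ho\"lder's inequality against the size bound on the kernel gives $|Tf_2(y)|\lesssim\rho^{-d/p}\|f\|_{L^p}$ for $y\in B(x,\rho)$ (for $p=1$ use the $L^\infty$ bound on $K$ off the diagonal). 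Also $Tf_2\in L^{q_0}$ with $\|Tf_2\|_{L^{q_0}}\lesssim\|f\|_{L^\infty}^\alpha\|f\|_{L^p}^{1-\alpha}$. Splitting $\Phi_t*|Tf_2|^k(x)$ over $B(x,\rho)$ and its complement, estimating on $B(x,\rho)$ by the pointwise bound and $\|\Phi_t\|_{L^1}=\|\Phi\|_{L^1}$, and on the complement by breaking into dyadic annuli about $x$, dominating $\Phi_t$ on each annulus by its value at the inner radius, using Ho\"lder with exponent $q_0/k$ on each annulus and summing a convergent series, yields
\[
\Phi_t*|Tf_2|^k(x)\ \lesssim_{\Phi,p,k}\ \rho^{-dk/p}\,\|f\|_{L^p}^k+\rho^{-dk/q_0}\,\|f\|_{L^\infty}^{k\alpha}\|f\|_{L^p}^{k(1-\alpha)},
\]
which is of the asserted form with a poly‑logarithmic prefactor for the chosen $\rho$.

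\emph{Near piece, the heart of the argument.} Put $Q:=B(x,4\rho)$, $v:=Tf_1$, and let $v_Q$ be the average of $v$ over $Q$. As $f_1$ is bounded and supported in $B(x,2\rho)$, $\|v\|_{BMO}\lesssim\|f\|_{L^\infty}$ and $\|v\|_{L^{q_0}}\lesssim\|f\|_{L^\infty}\rho^{d/q_0}$, hence $|v_Q|\lesssim\|f\|_{L^\infty}$ and, writing $w:=|v-v_Q|$, John--Nirenberg gives $|\{y\in Q:\ w(y)>\lambda\}|\le C|Q|\,e^{-c\lambda/\|f\|_{L^\infty}}$ with $c,C$ dimensional. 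From $|v|^k\lesssim_k w^k+|v_Q|^k$ and $B(x,\rho)\subset Q$ one gets $\Phi_t*|Tf_1|^k(x)\lesssim_k\mathcal E+\|f\|_{L^\infty}^k\|\Phi\|_{L^1}$, where $\mathcal E:=\int_{B(x,\rho)}\Phi_t(x-y)\,w(y)^k\,dy$. I would estimate $\mathcal E$ as follows: write it by the layer‑cake formula; for each super‑level set use the rearrangement inequality $\int_{E}\Phi_t(x-\cdot)\le\int_{|v|\le(|E|/\omega_d)^{1/d}/t}\Phi$ ($\omega_d=|B(0,1)|$), valid since $\Phi_t$ is radially decreasing; feed in the John--Nirenberg bound for the measure of $\{w>\lambda\}\cap B(x,\rho)$ together with the trivial bound $\|\Phi\|_{L^1}$; substitute $\lambda=(\|f\|_{L^\infty}/c)\mu$; and interchange the order of integration by Tonelli. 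This reduces $\mathcal E$ to $\|f\|_{L^\infty}^k$ times a quantity bounded by $\|\Phi\|_{L^1}+\tfrac{d^k}{k}\int_{|v|\le C'/t}\Phi(v)\big(\log\tfrac{C'}{t|v|}\big)^k\,dv$; splitting this last integral at $|v|=1$ and using $(\log\tfrac{C'}{t}+\log\tfrac1{|v|})^k\lesssim_k(\log\tfrac1t)^k+(\log\tfrac1{|v|})^k$ bounds it by $(\log\tfrac1t)^k\|\Phi\|_{L^1}+\int_{|v|\le1}\Phi(v)(\log\tfrac1{|v|})^k\,dv$. The last quantity depends only on $\Phi$ and $k$ (it is controlled by $\|\Phi\|_{L(\log L)^k}$, and is in particular finite whenever $\Phi$ is bounded, which is the situation relevant to Section~\ref{sec:ThmVort}), so $\mathcal E\lesssim_{\Phi,k}(1+(\log\tfrac1t)^k)\|f\|_{L^\infty}^k$.

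Adding the far and near contributions and taking the supremum over $x$ then gives the estimate with $\Psi_k(t)\asymp1+\log^{\max\{1,k\}}(1/t)$, valid for $t<T^*$ once $T^*=T^*(k)$ is small enough that this $\Psi_k$ is positive and increasing on $(0,T^*)$. I expect the core estimate for $\mathcal E$ to be the only delicate point, and the only place where the $L\log L$ hypothesis is really used: the exponential decay of the super‑level sets of $Tf$ must be balanced against the radial‑decreasing, $L\log L$ structure of $\Phi$ so that the tail of the layer‑cake integral converges and contributes only logarithmically in $t$. The assumption $f\in L^p$ is indispensable — for giving $Tf$ a pointwise meaning off $B(x,\rho)$, for controlling its averages, and for the mixed‑norm term; the symmetry and the cancellation \eqref{eq:RadialCancel} of the kernel play no visible role in this scheme (they would enter only if one wanted the far‑field bound in terms of $\|f\|_{L^\infty}$ rather than $\|f\|_{L^p}$, by subtracting spherical averages of $K$), and are presumably stated for the applications and for generality.
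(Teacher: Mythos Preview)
Your argument is essentially correct and reaches the same conclusion, but the core step is organized rather differently from the paper. The paper splits the convolution integral at a fixed ball $B$ of radius $\sim 1$ and then, on $B$, further splits $f_x$ into its near and far parts; the crucial local piece $J=\int_B |g_t|\,|T(f_x\1_{3B})|$ is handled by Orlicz duality: Proposition~\ref{prop:LlogLBddCZ} (the Calder\'on--Zygmund $L\log L$ estimate, which is where the cancellation condition \eqref{eq:RadialCancel} enters) is dualized into Corollary~\ref{cor:eLBddCZ} to give $T:L^\infty(\mu|_{3B})\to\psi_*(L)(\mu|_B)$, and this is paired with $\|g_t\|_{\phi_*(L)}\lesssim\ln(e+t^{-1})\|\Phi\|_{L\log L}$. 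Your route replaces this machinery by the elementary chain $\|Tf_1\|_{BMO}\lesssim\|f\|_{L^\infty}$, John--Nirenberg for the level sets of $w=|Tf_1-(Tf_1)_Q|$, the layer-cake formula, and the bathtub (rearrangement) inequality for the radially decreasing $\Phi_t(x-\cdot)$. This is more self-contained, avoids Orlicz spaces and Proposition~\ref{prop:LlogLBddCZ} entirely, and---as you correctly note---does not use \eqref{eq:RadialCancel} at all. Both schemes ultimately need $\int\Phi(\ln(e+\Phi))^k<\infty$ for general $k$; you flag this honestly, and the paper's proof for $k\neq 1$ incurs the same requirement in its final display.

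One small omission: in the near piece you only account for $\int_{B(x,\rho)}\Phi_t(x-y)|Tf_1(y)|^k\,dy$, but $\Phi_t*|Tf_1|^k(x)$ is an integral over all of $\bR^d$. The missing tail $\int_{B(x,\rho)^c}\Phi_t(x-y)|Tf_1(y)|^k\,dy$ is harmless---use $\|Tf_1\|_{L^{q_0}}\lesssim\|f\|_{L^\infty}\rho^{d/q_0}$ and repeat verbatim your dyadic-annulus argument from the far piece to get a contribution $\lesssim\|f\|_{L^\infty}^k$---but it should be said. With that patched, your proof stands.
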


\medskip

\begin{proof}[Proof of Theorem~\ref{th:MainThmVor}]
We construct an approximating sequence from \eqref{eq:VelVorForm} as follows:
\begin{align*}
\partial_t\omega^{(n)} -\Delta \omega^{(n)} &=\omega^{(n-1)}\nabla u^{(n-1)} - u^{(n-1)}\nabla\omega^{(n-1)},\qquad \omega^{(n)}(0,x)=\omega_0\ ,
\\
u_j^{(n-1)}(x,t) &=c\int_{\mathbb{R}^3} \epsilon_{j,k,\ell}\ \partial_{y_k}\frac{1}{|x-y|}\omega_\ell^{(n-1)}(y,t)dy\ .
\end{align*}
We let $u^{(n)}+iv^{(n)}$ and $\omega^{(n)}+i\zeta^{(n)}$ be the analytic extension of the approximating sequence and let
\begin{align*}
&U^{(n)}(x,t)=u^{(n)}(x,\alpha t,t)\ , &&W^{(n)}(x,t)=w^{(n)}(x,\alpha t,t)\ ,
\\
&V^{(n)}(x,t)=v^{(n)}(x,\alpha t,t)\ , &&Z^{(n)}(x,t)=\zeta^{(n)}(x,\alpha t,t)\ ,
\end{align*}
for which we have the iterations:
\begin{align*}
W^{(n+1)}(x,t) &= e^{t\Delta}\omega_0 +\int_0^t e^{(t-s)\Delta}W^{(n)}\nabla U^{(n)}ds -\int_0^t e^{(t-s)\Delta}Z^{(n)}\nabla V^{(n)}ds
\\
&\quad-\int_0^t e^{(t-s)\Delta}U^{(n)}\nabla W^{(n)}ds +\int_0^t e^{(t-s)\Delta}V^{(n)}\nabla Z^{(n)}ds +\int_0^t e^{(t-s)\Delta}\alpha\cdot\nabla Z^{(n+1)} ds
\\
Z^{(n+1)}(x,t) &= \int_0^t e^{(t-s)\Delta}Z^{(n)}\nabla U^{(n)}ds +\int_0^t e^{(t-s)\Delta}W^{(n)}\nabla V^{(n)}ds
\\
&\quad-\int_0^t e^{(t-s)\Delta}V^{(n)}\nabla W^{(n)}ds -\int_0^t e^{(t-s)\Delta}U^{(n)}\nabla Z^{(n)}ds -\int_0^t e^{(t-s)\Delta}\alpha\cdot\nabla W^{(n+1)} ds
\end{align*}
where
\begin{align}
U_j^{(n)}(x,t) &=c\int_{\mathbb{R}^3} \epsilon_{j,k,\ell}\ \partial_{y_k}\frac{1}{|x-y|}W_\ell^{(n)}(y,t)dy\ , \label{eq:BiotSU}
\\
V_j^{(n)}(x,t) &=c\int_{\mathbb{R}^3} \epsilon_{j,k,\ell}\ \partial_{y_k}\frac{1}{|x-y|} Z_\ell^{(n)}(y,t)dy\ . \label{eq:BiotSV}
\end{align}
Analogous to the proof of Theorem~\ref{th:MainThm} we have the statement: \textit{There exists $T$ such that for all $n$
\begin{align}
W^{(n)},Z^{(n)} \in C([0,T);bmo(\bR^3)^3)\ ,\quad W^{(n)},Z^{(n)}&\in C([0,T); L^p(\bR^3)^3)\ , \label{eq:WnZnBMOLp}
\\
\phi_1(t)W^{(n)},\phi_1(t)Z^{(n)},\phi_1(t)U^{(n)},\phi_1(t)V^{(n)} &\in C([0,T);L^\infty(\bR^3)^3)\ , \label{eq:WnZnLinfty}
\end{align}
where $\phi_1(t)$ is given in Theorem~\ref{th:MainThmVor}. Moreover
\begin{align*}
&\underset{t<T}{\sup}\ \phi_1(t)\|W^{(n)}\|_{L^\infty}+\underset{t<T}{\sup}\ \phi_1(t)\|Z^{(n)}\|_{L^\infty}<K_n\ ,\quad \underset{t<T}{\sup}\ \|W^{(n)}\|_{bmo}+\underset{t<T}{\sup}\ \|Z^{(n)}\|_{bmo} <K_n''\ ,
\\
&\underset{t<T}{\sup}\ \|W^{(n)}\|_{B^0_{\infty,\infty}}+\underset{t<T}{\sup}\ \|Z^{(n)}\|_{B^0_{\infty,\infty}} <K_n'\ , \qquad\underset{t<T}{\sup}\ \|W^{(n)}\|_{L^p}+\underset{t<T}{\sup}\ \|Z^{(n)}\|_{L^p}<K_n'''\ ,
\\
&\underset{t<T}{\sup}\ \phi_1(t)\|U^{(n)}\|_{L^\infty}+\underset{t<T}{\sup}\ \phi_1(t)\|V^{(n)}\|_{L^\infty}<Q_n\ ,
\end{align*}
where $K_n,...,K_n''',Q_n$ are all bounded by a constant determined by $\max\{\|\omega_0\|_{bmo},\|\omega_0\|_{L^p}\}$.}

\bigskip

\textit{Proof of the Claim:} The estimates for $W^{(0)}$ and $Z^{(0)}$ are very similar and easier compared to Theorem~\ref{th:MainThm}, so we get straight to the conclusion: With the choice of $\alpha$ such that $C|\alpha|t^{1/2}\psi_2(t) <1/2$ for all $t<T$ and some constant $C$, we have
\begin{align*}
\cM_0:=\max\{K_0,K_0',K_0'',K_0'''\}\lesssim \|\omega_0\|_{bmo} + \|\omega_0\|_{L^p}\ .
\end{align*}

The essence of binding the rest of $\cM_n$'s still lies in the $L^\infty$-estimation (and $L^p$-estimation) of the nonlinear terms.
\begin{align*}
\left\|\int_0^t e^{(t-s)\Delta}W^{(n)}\nabla U^{(n)}ds\right\|_{L^\infty}&\lesssim\int_0^t \|e^{(t-s)\Delta}W^{(n)}\nabla U^{(n)}\|_{L^\infty}ds
\\
&\lesssim\int_0^t \|W^{(n)}\|_{L^\infty}\left\|e^{(t-s)\Delta}|\nabla U^{(n)}|\right\|_{L^\infty}ds\ .
\end{align*}
Notice that the map
\begin{align*}
(Tf)_j(x,t):=c\ \nabla\int_{\mathbb{R}^3} \epsilon_{j,k,\ell}\ \partial_{y_k}\frac{1}{|x-y|}f_\ell(y,t)dy
\end{align*}
defines a $\cC$-$\cZ$ operator that satisfies the conditions in Lemma~\ref{le:HeatSmGpLfty}, so applying the Lemma
\begin{align*}
\left\|\int_0^t e^{(t-s)\Delta}W^{(n)}\nabla U^{(n)}ds\right\|_{L^\infty}&\lesssim \int_0^t \|W^{(n)}\|_{L^\infty} \left\|e^{(t-s)\Delta}|T W^{(n)}|\right\|_{L^\infty}ds
\\
&\lesssim \int_0^t \phi_1(t-s)^{-1}\|W^{(n)}\|_{L^\infty}\left(\|W^{(n)}\|_{L^\infty}+\|W^{(n)}\|_{L^p}\right)ds
\\
&\lesssim K_n\left(K_n+K_n'''\right)\int_0^t \phi_1(t-s)^{-1}\left(\phi_1(s)^{-1}+\phi_1(s)^{-2}\right)ds\ .
\end{align*}
Let $W_x^{(n)}(y)$ denote the translation $W^{(n)}(x-y)$ and $B$ be the unit ball centered at 0. Then, from \eqref{eq:BiotSU} we know
\begin{align*}
\left|U^{(n)}(x,t)\right| &\lesssim \int_B \frac{1}{|y|^2} \left|W_x^{(n)}(y,t)\right| dy + \int_{B^c} \frac{1}{|y|^2} \left|W_x^{(n)}(y,t)\right| dy
\\
&\lesssim \|W_x^{(n)}\|_{{L^\infty}}\int_B |y|^{-2}dy + \|W_x^{(n)}\|_{L^p}\||y|^{-2}\1_{B^c}\|_{L^{p'}}\lesssim \|W^{(n)}\|_{{L^\infty}} + \|W^{(n)}\|_{L^p}\ ,
\end{align*}
where we used the fact $p'>\frac{3}{2}$ (since $p<3$), so
\begin{align}\label{eq:VorUnLinfty}
\|U^{(n)}\|_{L^\infty} \lesssim \|W^{(n)}\|_{{L^\infty}} + \|W^{(n)}\|_{L^p}\ .
\end{align}
Similarly, $\|V^{(n)}\|_{L^\infty} \lesssim \|Z^{(n)}\|_{{L^\infty}} + \|Z^{(n)}\|_{L^p}$.
Then, by Lemma~\ref{le:HeatSmGpLfty} it follows that
\begin{align*}
\left|\int_0^t \nabla e^{(t-s)\Delta}U^{(n)}W^{(n)} ds\right|
&\lesssim\int_0^t \|W^{(n)}\|_{L^\infty}\left\|\nabla G_{t-s}(x-\cdot)U^{(n)}(\cdot)\right\|_{L^1}ds
\\
&\lesssim\int_0^t \|W^{(n)}\|_{L^\infty}\left(\left\|G_{t-s}(x-\cdot)|\nabla |U^{(n)}(\cdot)||\right\|_{L^1}+ 6|U^{(n)}(x)|\right)ds
\\
&\lesssim\int_0^t \|W^{(n)}\|_{L^\infty}\left(\sup_{x\in\bR^3}\left\|G_{t-s}(x-\cdot)|\nabla U^{(n)}(\cdot)|\right\|_{L^1} + \|U^{(n)}\|_{L^\infty}\right)ds
\\
&\lesssim\int_0^t \|W^{(n)}\|_{L^\infty}\left(\left\|e^{(t-s)\Delta}|\nabla U^{(n)}|\right\|_{L^\infty} + \|W^{(n)}\|_{{L^\infty}} + \|W^{(n)}\|_{L^p}\right)ds
\\
&\lesssim K_n\left(K_n+K_n'''\right)\int_0^t \left(1+\phi_1(t-s)^{-1}\right) \left(\phi_1(s)^{-1}+\phi_1(s)^{-2}\right)ds\ .
\end{align*}
Therefore
\begin{align*}
\left\|\int_0^t e^{(t-s)\Delta}U^{(n)}\nabla W^{(n)}ds\right\|_{L^\infty} &\lesssim \left\|\int_0^t e^{(t-s)\Delta}W^{(n)}\nabla U^{(n)}ds\right\|_{L^\infty} + \left\|\int_0^t \nabla e^{(t-s)\Delta}U^{(n)}W^{(n)} ds\right\|_{L^\infty}
\\
&\lesssim K_n\left(K_n+K_n'''\right)\int_0^t \left(1+\phi_1(t-s)^{-1}\right) \left(\phi_1(s)^{-1}+\phi_1(s)^{-2}\right)ds\ .
\end{align*}

For the estimations in $L^p$, we divide the proof into two cases. For $p>1$: by Young's inequality and the $L^p$-boundedness of $T$, we have
\begin{align*}
\left\|\int_0^t e^{(t-s)\Delta}W^{(n)}\nabla U^{(n)}ds\right\|_{L^p}&\lesssim \int_0^t \|W^{(n)}\|_{L^\infty} \left\|e^{(t-s)\Delta}|T W^{(n)}|\right\|_{L^p}ds
\\
&\lesssim \int_0^t \|W^{(n)}\|_{L^\infty}\|W^{(n)}\|_{L^p}ds \lesssim K_nK_n'''\int_0^t \phi_1(s)^{-1}ds\ .
\end{align*}
Similar to the argument for $L^\infty$-estimation, we deduce
\begin{align*}
\left\|\int_0^t \nabla e^{(t-s)\Delta}U^{(n)}W^{(n)} ds\right\|_{L^p}
&\lesssim \int_0^t \left\|\nabla e^{(t-s)\Delta}U^{(n)}W^{(n)}\right\|_{L^p} ds
\\
&\lesssim \int_0^t \left\|\nabla G_{t-s}\right\|_{L^1} \left\|U^{(n)}W^{(n)}\right\|_{L^p} ds
\\
&\lesssim \int_0^t (t-s)^{-1/2} \|U^{(n)}\|_{L^\infty}\|W^{(n)}\|_{L^p}\ ds
\\
&\lesssim Q_nK_n'''\int_0^t (t-s)^{-1/2}\phi_1(s)^{-1} ds\ .
\end{align*}
Combining the above two results,
\begin{align*}
\left\|\int_0^t e^{(t-s)\Delta}U^{(n)}\nabla W^{(n)}ds\right\|_{L^p} &\lesssim K_n K_n''' \int_0^t \phi_1(s)^{-1}ds + Q_nK_n'''\int_0^t (t-s)^{-1/2}\phi_1(s)^{-1} ds\ .
\end{align*}
For $p=1$: by Young's inequality, the $L^p$-boundedness of $T$ and interpolations in $L^p$, we have
\begin{align*}
\left\|\int_0^t e^{(t-s)\Delta}W^{(n)}\nabla U^{(n)}ds\right\|_{L^1} &\lesssim \int_0^t \left\| e^{(t-s)\Delta}W^{(n)}\nabla U^{(n)}\right\|_{L^1}ds
\\
&\lesssim \int_0^t \|W^{(n)}\|_{L^2}\|\nabla U^{(n)}\|_{L^2}ds
\\
&\lesssim \int_0^t \|W^{(n)}\|_{L^\infty}\|W^{(n)}\|_{L^1}ds \lesssim K_nK_n'''\int_0^t \phi_1(s)^{-1}ds\
\end{align*}
Similar to the case $p>1$,
\begin{align*}
\left\|\int_0^t \nabla e^{(t-s)\Delta}U^{(n)}W^{(n)}ds\right\|_{L^1}&\lesssim Q_nK_n'''\int_0^t (t-s)^{-1/2}\phi_1(s)^{-1} ds\ .
\end{align*}
Combining the above two results,
\begin{align*}
\left\|\int_0^t e^{(t-s)\Delta}U^{(n)}\nabla W^{(n)}ds\right\|_{L^1} &\lesssim K_n'''(K_n+Q_n) \int_0^t \left(\phi_1(s)^{-1} + (t-s)^{-1/2}\phi_1(s)^{-1}\right)ds\ .
\end{align*}

After estimating all the other nonlinear terms in exactly the same way, one can conclude that: If $1\le p<3$, then
\begin{align*}
\cM_{n+1}&:=\max\{K_{n+1},K_{n+1}',K_{n+1}'',K_{n+1}''', Q_{n+1}\}
\\
&\lesssim \|\omega_0\|_{bmo}+\|\omega_0\|_{L^p} + T^{1/2}\Psi_1^\omega(T)\left(\cM_n\right)^2\ ,
\end{align*}
where $\Psi_1^\omega$ is some function with logarithmic blow-up at $t=0$.
If we take $T_\omega$ such that
\begin{align*}
T_\omega^{1/2}\Psi_1^\omega(T_\omega)\lesssim \left(\|\omega_0\|_{bmo}+\|\omega_0\|_{L^p}\right)^{-1}\ ,
\end{align*}
then all the sequences are bounded by $\displaystyle\left(T_\omega^{1/2}\Psi_1^\omega(T_\omega)\right)^{-1}$. This ends the proof of the claim.

The rest of the proof is similar to that of Theorem~\ref{th:MainThm}, we omit the details.
\end{proof}

\bigskip

\begin{proof}[Proof of Theorem~\ref{th:MainThmVor2}] By Theorem~\ref{th:MainThmVor} there exists $t_\epsilon \ge (\|\omega_0\|_{bmo}+\|\omega_0\|_{L^p})^{-\delta}$ with $\delta>2$ such that for some constant $M_0\approx 1$
\begin{align*}
\|\omega(t_\epsilon)\|_{bmo} &\le M_0\ \|\omega_0\|_{bmo}\ ,\qquad\ \|\omega(t_\epsilon)\|_{L^p} \le M_0\ \|\omega_0\|_{L^p}\ ,
\\
\|\omega(t_\epsilon)\|_{L^\infty} &\le M_0\ \phi_1(t_\epsilon)^{-1} \|\omega_0\|_{bmo} \le \delta M_0\ \|\omega_0\|_{bmo}\ \phi_1(\|\omega_0\|_{bmo}+\|\omega_0\|_{L^p})^{-1}\ .
\end{align*}
By $L^p$-interpolation, for any pair $(q,r)$ such that $p<r<3$ and $1/q\le 1/r-1/3$,
\begin{align*}
\|\omega(t_\epsilon)\|_{L^r} &\le \|\omega(t_\epsilon)\|_{L^p}^{\frac{p}{r}}\|\omega(t_\epsilon)\|_{L^\infty}^{1-\frac{p}{r}} \le \delta M_0\ \|\omega_0\|_{L^p}^{\frac{p}{r}} \left(\|\omega_0\|_{bmo}\ \phi_1(\|\omega_0\|_{bmo}+\|\omega_0\|_{L^p})^{-1}\right)^{1-\frac{p}{r}} \ ,
\\
\|\omega(t_\epsilon)\|_{L^q} &\le \delta M_0\ \|\omega_0\|_{L^p}^{\frac{p}{q}} \left(\|\omega_0\|_{bmo}\ \phi_1(\|\omega_0\|_{bmo}+\|\omega_0\|_{L^p})^{-1}\right)^{1-\frac{p}{q}}
\end{align*}
and by the $L^p$-bound of $\mathcal{CZO}$ and the estimate in \eqref{eq:VorUnLinfty},
\begin{align*}
\|u(t_\epsilon)\|_{L^q} &\lesssim \|\nabla u(t_\epsilon)\|_{L^r}^s \|u(t_\epsilon)\|_{L^\infty}^{1-s} \lesssim \|\omega(t_\epsilon)\|_{L^r}^s \left(\|\omega(t_\epsilon)\|_{L^\infty} + \|\omega(t_\epsilon)\|_{L^p} \right)^{1-s}
\\
&\lesssim M_0 \ell\  \|\omega_0\|_{L^p}^{\frac{ps}{r}} \left(\|\omega_0\|_{bmo}\ \phi_1(\|\omega_0\|_{bmo}+\|\omega_0\|_{L^p})^{-1} + \|\omega_0\|_{L^p}\right)^{s-\frac{ps}{r}}
\\
&\qquad\qquad\qquad\qquad\times \left(\|\omega_0\|_{bmo}\ \phi_1(\|\omega_0\|_{bmo}+\|\omega_0\|_{L^p})^{-1} + \|\omega_0\|_{L^p} \right)^{1-s}
\end{align*}
where $s=\displaystyle \frac{r}{(1-\frac{r}{3})q}$.
If choose $t_\epsilon \approx (\|\omega_0\|_{bmo}+\|\omega_0\|_{L^p})^{-\delta}$, then by the estimates for $\left|U^{(n)}(x,t)\right|$ and $\left|V^{(n)}(x,t)\right|$ in the proof of Theorem~\ref{th:MainThmVor} we know
\begin{align*}
\underset{s<t_\epsilon}{\sup}\ \phi_1(s)\|U^{(n)}\|_{L^\infty} & + \underset{s<t_\epsilon}{\sup}\ \phi_1(s)\|V^{(n)}\|_{L^\infty} \le \underset{s<t_\epsilon}{\sup}\ \phi_1(s)\|W^{(n)}\|_{L^\infty}+\underset{s<t_\epsilon}{\sup}\ \phi_1(s)\|Z^{(n)}\|_{L^\infty}
\\
&\quad + \underset{s<t_\epsilon}{\sup}\ \phi_1(s)\|W^{(n)}\|_{L^p}+\underset{s<t_\epsilon}{\sup}\ \phi_1(s)\|Z^{(n)}\|_{L^p} \le M_0\ \|\omega_0\|_{bmo} + \phi_1(t_\epsilon)\|\omega_0\|_{L^p}\ .
\end{align*}
Thus, a standard converging argument yields
\begin{align*}
\underset{s\in(0,t_\epsilon)}{\sup}\ \underset{y\in\cD_s}{\sup}\ \phi_1(s)\|u(\cdot,y,s)\|_{L^\infty} + \underset{s\in(0,t_\epsilon)}{\sup}\ \underset{y\in\cD_s}{\sup}\ \phi_1(s)\|v(\cdot,y,s)\|_{L^\infty}\le M_0\ \|\omega_0\|_{bmo} \
\end{align*}
where
\begin{align*}
\cD_s=: \left\{(x,y)\in\bC^3\ \big|\ |y|\lesssim (1-M_0^{-1})s^{1/2}\Phi_2(s) \right\} \ .
\end{align*}
In particular, $\|u(t_\epsilon)\|_{L^\infty} \le M_0\ \phi_1(t_\epsilon)^{-1} \|\omega_0\|_{bmo} \le \delta M_0\ \|\omega_0\|_{bmo}\ \phi_1(\|\omega_0\|_{bmo}+\|\omega_0\|_{L^p})^{-1}$.
Let $\bar K_n$, $\bar K_n''$, $\bar K_n'''$, $\bar Q_n$ and $\bar Q_n'$ be such that $\displaystyle \underset{t_\epsilon<s<T}{\sup}\left( \|W^{(n)}\|_{bmo}+ \|Z^{(n)}\|_{bmo}\right)<\bar K_n''$,
\begin{align*}
\underset{t_\epsilon<s<T}{\sup} \phi_1(s-t_\epsilon) \left( \|W^{(n)}\|_{L^\infty} + \|Z^{(n)}\|_{L^\infty}\right)<\bar K_n\ ,\qquad \underset{t_\epsilon<s<T}{\sup}\left( \|W^{(n)}\|_{L^q}+ \|Z^{(n)}\|_{L^q}\right)<\bar K_n''' \ ,
\\
\underset{t_\epsilon<s<T}{\sup} \phi_1(s-t_\epsilon) \left( \|U^{(n)}\|_{L^\infty}+ \|V^{(n)}\|_{L^\infty}\right)<\bar Q_n\ , \qquad \underset{t_\epsilon<s<T}{\sup}\left( \|U^{(n)}\|_{L^q}+ \|V^{(n)}\|_{L^q}\right)<\bar Q_n'\ .
\end{align*}
The estimates for $U^{(0)},V^{(0)},W^{(0)},Z^{(0)}$ initiated at $t_\epsilon$ are the same as that for Theorem~\ref{th:MainThm} and Theorem~\ref{th:MainThmVor}: With the choice of $\alpha$ such that $|\alpha|(t-t_\epsilon)^{1/2}\lesssim 1-M_\epsilon^{-1}$ for all $t-t_\epsilon<T$, we have
\begin{align*}
\bar \cM_0&:=\max\{\bar K_0,\bar K_0'',\bar K_0''',\bar Q_0,\bar Q_0'\}\lesssim \|u(t_\epsilon)\|_{L^\infty} + \|u(t_\epsilon)\|_{L^q} + \|\omega(t_\epsilon)\|_{L^\infty} + \|\omega(t_\epsilon)\|_{L^q}
\\
&\lesssim \left(\|\omega_0\|_{bmo} + \|\omega_0\|_{L^p} + \|\omega_0\|_{L^p}^{\frac{p}{q}} \|\omega_0\|_{bmo}^{1-\frac{p}{q}} + \|\omega_0\|_{L^p}^{\frac{ps}{r}}\|\omega_0\|_{bmo}^{s-\frac{ps}{r}}\right) \tilde\Phi\left(\|\omega_0\|_{bmo}+\|\omega_0\|_{L^p}\right)
\\
&\lesssim \delta M_0\ \left(\|\omega_0\|_{bmo} + \|\omega_0\|_{L^p}\right) \tilde\Phi\left(\|\omega_0\|_{bmo}+\|\omega_0\|_{L^p}\right)
\end{align*}
where $\tilde \Phi$ is some function with logarithmic growth as $r\to\infty$.
The $L^\infty$-estimates for the nonlinear terms are summarized as
\begin{align*}
\left\|\int_{t_\epsilon}^t e^{(t-s)\Delta}W^{(n)}\nabla U^{(n)}ds\right\|_{L^\infty}&\lesssim \int_{t_\epsilon}^t \phi_1(t-s)^{-1}\|W^{(n)}\|_{L^\infty}\left(\|W^{(n)}\|_{L^\infty}+\|W^{(n)}\|_{L^q}\right)ds \ .
\end{align*}
and
\begin{align*}
\left\|\int_{t_\epsilon}^t \nabla e^{(t-s)\Delta}U^{(n)}W^{(n)} ds\right\|_{L^\infty} &\lesssim\int_{t_\epsilon}^t \left(1+\phi_1(t-s)^{-1}\right)\|W^{(n)}\|_{L^\infty}\left(\|W^{(n)}\|_{{L^\infty}} + \|W^{(n)}\|_{L^q}\right)ds
\end{align*}
The $L^q$-estimates are summarized as
\begin{align*}
\left\|\int_{t_\epsilon}^t e^{(t-s)\Delta}\nabla U^{(n)}W^{(n)} ds\right\|_{L^q}
&\lesssim \int_{t_\epsilon}^t \|W^{(n)}\|_{L^\infty}\|W^{(n)}\|_{L^q}ds
\end{align*}
and
\begin{align*}
\left\|\int_{t_\epsilon}^t \nabla e^{(t-s)\Delta}U^{(n)}W^{(n)} ds\right\|_{L^q}
&\lesssim\int_{t_\epsilon}^t \|W^{(n)}\|_{L^\infty}\left\||\nabla G_{t-s}|*|U^{(n)}|\right\|_{L^q}ds
\\
&\lesssim\int_{t_\epsilon}^t \|W^{(n)}\|_{L^\infty}\left(\left\|G_{t-s}*|\nabla|U^{(n)}||\right\|_{L^q}+ 6\|U^{(n)}\|_{L^q}\right)ds
\\
&\lesssim\int_{t_\epsilon}^t \|W^{(n)}\|_{L^\infty}\left(\|W^{(n)}\|_{L^q}+ \|U^{(n)}\|_{L^q}\right)ds \ .
\end{align*}
Applying Lemma~\ref{le:HeatSmGpLfty} twice to the `pressure term' yields
\begin{align*}
\left\|\int_{t_\epsilon}^te^{(t-s)\Delta}\nabla\Pi^{(n)}ds\right\|_{L^\infty}&\lesssim\int_{t_\epsilon}^t \left\| e^{(t-s)\Delta} P\left((U^{(n)}\cdot \nabla)U^{(n)}-(V^{(n)}\cdot \nabla)V^{(n)}\right)\right\|_{L^\infty}ds
\\
&\lesssim\int_{t_\epsilon}^t \phi_1(t-s)^{-2}\left(\left\|e^{\frac{t-s}{2}\Delta}\left((U^{(n)}\cdot \nabla)U^{(n)}-(V^{(n)}\cdot \nabla)V^{(n)}\right)\right\|_{L^q} \right.
\\
&\qquad\qquad\qquad\quad \left. +  \left\|e^{\frac{t-s}{2}\Delta}\left((U^{(n)}\cdot \nabla)U^{(n)}-(V^{(n)}\cdot \nabla)V^{(n)}\right)\right\|_{L^\infty}\right)ds
\\
&\lesssim\int_{t_\epsilon}^t \phi_1(t-s)^{-2} \left(\|U^{(n)}\|_{L^\infty}\|\nabla U^{(n)}\|_{L^q} + \|V^{(n)}\|_{L^\infty}\|\nabla V^{(n)}\|_{L^q} \right.
\\
&\quad \left. + \|U^{(n)}\|_{L^\infty}\left\|e^{\frac{t-s}{2}\Delta}|T(W^{(n)})|\right\|_{L^\infty}  + \|V^{(n)}\|_{L^\infty}\left\|e^{\frac{t-s}{2}\Delta} |T(Z^{(n)})|\right\|_{L^\infty} \right)ds
\\
&\lesssim\int_{t_\epsilon}^t \phi_1(t-s)^{-2} \left(\|U^{(n)}\|_{L^\infty}(\|W^{(n)}\|_{L^\infty} + \|W^{(n)}\|_{L^q}) \right.
\\
&\qquad\qquad\qquad\qquad\qquad \left.  + \|V^{(n)}\|_{L^\infty}(\|Z^{(n)}\|_{L^\infty} + \|Z^{(n)}\|_{L^q}) \right) ds
\end{align*}
Similarly, by the $L^q$-bound of $P$ and $\mathcal{CZO}$,
\begin{align*}
\left\|\int_{t_\epsilon}^te^{(t-s)\Delta}\nabla\Pi^{(n)}ds\right\|_{L^q} &\lesssim \int_{t_\epsilon}^t \left\| e^{(t-s)\Delta} P\left((U^{(n)}\cdot \nabla)U^{(n)}-(V^{(n)}\cdot \nabla)V^{(n)}\right)\right\|_{L^q}ds
\\
&\lesssim \int_{t_\epsilon}^t \left(\|(U^{(n)}\cdot \nabla)U^{(n)}\|_{L^q} + \|(V^{(n)}\cdot \nabla)V^{(n)}\|_{L^q}\right) ds
\\
&\lesssim (t-t_\epsilon) \left(\|U^{(n)}\|_{L^\infty}\|W^{(n)}\|_{L^q} + \|V^{(n)}\|_{L^\infty}\|Z^{(n)}\|_{L^q} \right)
\end{align*}
Define $\bar \cM_n:=\max\{\bar K_n,\bar K_n'',\bar K_n''',\bar Q_n,\bar Q_n'\}$. Following the same deduction as in the proofs of Theorem~\ref{th:MainThmVor} and Theorem~\ref{th:MainThm}, with the choice of $\alpha$ such that $C|\alpha|(t-t_\epsilon)^{1/2}\lesssim 1-M_\epsilon^{-1}$,
\begin{align*}
\|W^{(n+1)}\|_{bmo} + \|Z^{(n+1)}\|_{bmo} &\lesssim \|\omega(t_\epsilon)\|_{bmo} + (T-t_\epsilon)\psi_1(T)\ \bar \cM_n \bar K_n''
\\
\|W^{(n+1)}\|_{L^q} + \|Z^{(n+1)}\|_{L^q} &\lesssim \|\omega(t_\epsilon)\|_{L^q} + (T-t_\epsilon)\psi_2(T)\ \bar \cM_n \bar K_n'''
\\
\|U^{(n+1)}\|_{L^q} + \|V^{(n+1)}\|_{L^q} &\lesssim \|u(t_\epsilon)\|_{L^q} + (T-t_\epsilon)\psi_3(T)\ \bar \cM_n \bar Q_n'
\\
\phi_1(s-t_\epsilon) \left(\|W^{(n+1)}\|_{L^\infty} + \|Z^{(n+1)}\|_{L^\infty}\right) &\lesssim \|\omega(t_\epsilon)\|_{bmo} + (T-t_\epsilon)\psi_4(T)\ \bar \cM_n \bar K_n
\\
\|U^{(n+1)}\|_{L^\infty} + \|V^{(n+1)}\|_{L^\infty} &\lesssim \|u(t_\epsilon)\|_{L^\infty} + (T-t_\epsilon)\psi_5(T)\ \bar \cM_n \bar Q_n
\end{align*}
So, if $(T-t_\epsilon)\Psi(T)\lesssim \left(\|u(t_\epsilon)\|_{L^\infty} + \|u(t_\epsilon)\|_{L^q} + \|\omega(t_\epsilon)\|_{bmo} + \|\omega(t_\epsilon)\|_{L^q}\right)^{-1}$, then
\begin{align*}
\underset{s\in(t_\epsilon,T)}{\sup}\ \underset{y\in\bar \cD_{s-t_\epsilon}}{\sup} \phi_1(s-t_\epsilon) \|\omega(\cdot,y,s)\|_{L^\infty} + \underset{s\in(t_\epsilon,T)}{\sup}\ \underset{y\in\bar \cD_{s-t_\epsilon}}{\sup} \phi_1(s-t_\epsilon) \|\zeta(\cdot,y,s)\|_{L^\infty} &\le M_\epsilon \|\omega(t_\epsilon)\|_{bmo}
\\
\underset{s\in(t_\epsilon,T)}{\sup}\ \underset{y\in\bar \cD_{s-t_\epsilon}}{\sup} \|u(\cdot,y,s)\|_{L^\infty} + \underset{s\in(t_\epsilon,T)}{\sup}\ \underset{y\in\bar \cD_{s-t_\epsilon}}{\sup} \|v(\cdot,y,s)\|_{L^\infty} &\le M_\epsilon \|u(t_\epsilon)\|_{L^\infty}
\\
\underset{s\in(t_\epsilon,T)}{\sup}\ \underset{y\in\bar \cD_{s-t_\epsilon}}{\sup} \|\omega(\cdot,y,s)\|_{bmo} + \underset{s\in(t_\epsilon,T)}{\sup}\ \underset{y\in\bar \cD_{s-t_\epsilon}}{\sup} \|\zeta(\cdot,y,s)\|_{bmo} &\le M_\epsilon \|\omega(t_\epsilon)\|_{bmo}
\end{align*}
where
\begin{align*}
\bar\cD_s=: \left\{(x,y)\in\bC^3\ \big|\ |y|\lesssim (1-M_\epsilon^{-1})s^{1/2}\bar\Phi_2(s) \right\} \ .
\end{align*}
Thus, if $(T-t_\epsilon)\Psi(T)\lesssim (\delta M_0)^{-1} \left(\|\omega_0\|_{bmo} + \|\omega_0\|_{L^p}\right)^{-1} \tilde\Phi\left(\|\omega_0\|_{bmo}+\|\omega_0\|_{L^p}\right)^{-1}$, combining the results of the periods before and after $t_\epsilon$ gives
\begin{align*}
\underset{t\in(0,T)}{\sup}\ \underset{y\in\tilde \cD_t}{\sup}\|\omega(\cdot,y,t)\|_{bmo} + \underset{t\in(0,T)}{\sup}\ \underset{y\in\tilde \cD_t}{\sup}\|\zeta(\cdot,y,t)\|_{bmo} &\le \max\{ M_\epsilon \|\omega(t_\epsilon)\|_{bmo}\ , M_0 \|\omega_0\|_{bmo}\}
\\
&\le M_\epsilon M_0 \|\omega_0\|_{bmo}
\end{align*}
where $\tilde\cD_s$ is determined such that $\cup_{0<s<T}\tilde\cD_s=(\cup_{0<s<T_\omega}\cD_s)\cup(\cup_{t_\epsilon<s<T} \bar \cD_{s-t_\epsilon})$ where $T_\omega$ is given in Theorem~\ref{th:MainThmVor}.
Note that $T$ depends on $t_\epsilon$, so does the size of $\tilde\cD_t$ at $T$. For a fixed increment of $\sup_{y\in\tilde \cD_t} \|\omega(\cdot,y,t)\|_{bmo}$, i.e. $M_\epsilon M_0$ being fixed, to maximize the analyticity radius at $T$, we pick $\delta\approx 2$ while $M_0\approx 1$.
Now recall \eqref{eq:VorUnLinfty} and for $t_\epsilon <s< T$, with $|\alpha|(t-t_\epsilon)^{1/2}\lesssim 1-M_\epsilon^{-1}$,
\begin{align*}
\phi_1(s-t_\epsilon) \left(\|U^{(n+1)}\|_{L^\infty} + \|V^{(n+1)}\|_{L^\infty}\right) &\le \phi_1(s-t_\epsilon) \left(\|W^{(n+1)}\|_{L^\infty} + \|Z^{(n+1)}\|_{L^\infty}\right)
\\
&\qquad + \phi_1(s-t_\epsilon) \left(\|W^{(n+1)}\|_{L^q} + \|Z^{(n+1)}\|_{L^q}\right)
\\
\le M_\epsilon\ \|\omega(t_\epsilon)\|_{bmo} & +  \phi_1(T-t_\epsilon)  \|\omega(t_\epsilon)\|_{L^q} + (T-t_\epsilon)\tilde \psi_5(T)\ \bar \cM_n \bar Q_n
\end{align*}
So if $(T-t_\epsilon)\Psi(T)\lesssim \left(\|u(t_\epsilon)\|_{L^q} + \|\omega(t_\epsilon)\|_{bmo} + \|\omega(t_\epsilon)\|_{L^q}\right)^{-1}$, then
\begin{align*}
\underset{s\in(T_\omega,T)}{\sup}\ \underset{y\in\bar \cD_{s-t_\epsilon}}{\sup} \phi_1(s-t_\epsilon) \|u(\cdot,y,s)\|_{L^\infty} + \underset{s\in(T_\omega,T)}{\sup}\ \underset{y\in\bar \cD_{s-t_\epsilon}}{\sup} \phi_1(s-t_\epsilon) \|v(\cdot,y,s)\|_{L^\infty} &\le M_\epsilon \|\omega(t_\epsilon)\|_{bmo}
\end{align*}
This, combined with the result in Theorem~\ref{th:MainThmVor}, leads to
\begin{align*}
&\underset{t\in(0,T)}{\sup}\ \underset{y\in\tilde \cD_t}{\sup}\ \phi_1(t) \|u(\cdot,y,t)\|_{L^\infty} + \underset{t\in(0,T)}{\sup}\ \underset{y\in\tilde \cD_t}{\sup}\ \phi_1(t) \|v(\cdot,y,t)\|_{L^\infty}
\\
&\quad \le \max\{M\ \|\omega_0\|_{bmo}\ , \phi_1(T_\omega-t_\epsilon)^{-1}\phi_1(T_\omega) M_\epsilon\ \|\omega(t_\epsilon)\|_{bmo}\} \le \max\{M, M_\epsilon M_0\}\ \|\omega_0\|_{bmo}
\end{align*}
\end{proof}

\begin{appendix}

\section{Appendix}\label{sec:Appedix}

In this appendix, we first take a brief survey of some auxiliary while necessary results on the heat semigroup in $BMO$-type norm, H\"older type estimate in Besov spaces and equivalency and embedding among some oscillation function spaces. Then we prove the technical lemmas we quoted in Section~\ref{sec:MainResult}.

The following result shows the heat semigroup is bounded in $BMO(\bR^d)$. The idea for proving is to define the norm of Hardy space $\cH^1$ through the convolution with the heat kernel and use the duality by \citet{Fefferman1972}. For the detailed definitions of $BMO$ and $\cH^1$, the reader may refer to \citet{Stein1993}
\begin{theorem}[\citet{Bolkart2018}]\label{th:semigpBMO}
Consider the equation $\partial_t u-\Delta u =0 $ in $\bR^d\times [0,\infty)$ with $u(0) =u_0\in BMO(\bR^d)$. There is a solution $u(t)$ and constant $C$ satisfying the estimate
\begin{align}
\underset{t>0}{\sup}\left(\|u(t)\|_{BMO} +t^{\frac{1}{2}}\|\nabla u(t)\|_{L^\infty} + t\|\nabla^2 u(t)\|_{L^\infty} + t\|\partial_t u\|_{L^\infty}\right)\le C\|u_0\|_{BMO}\ .
\end{align}
\end{theorem}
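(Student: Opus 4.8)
The plan is to prove the estimate by the standard duality approach: $BMO(\bR^d)$ is the dual of the Hardy space $\cH^1(\bR^d)$ (Fefferman's theorem), so it suffices to test the heat-evolved solution against $\cH^1$-atoms, or better, to use the heat-semigroup characterization of the $\cH^1$-norm. Concretely, I would first reduce the four quantities on the left to two: the pointwise derivative bounds $t^{1/2}\|\nabla u(t)\|_{L^\infty}$, $t\|\nabla^2 u(t)\|_{L^\infty}$ and $t\|\partial_t u(t)\|_{L^\infty}$ are not genuinely $L^\infty$ statements — they are the $BMO \to \dot C^{\,k}$ smoothing estimates for the heat semigroup, which follow from the $BMO$-invariance plus scaling. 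Indeed, since $\partial_t u = \Delta u$, the last bound is subsumed by the $\nabla^2$ bound; and $\nabla e^{t\Delta}$, $\nabla^2 e^{t\Delta}$ map $BMO \to L^\infty$ with the stated powers of $t$ by writing $\nabla^k e^{t\Delta} u_0 = (\nabla^k e^{(t/2)\Delta}) \ast (e^{(t/2)\Delta} u_0)$, estimating $e^{(t/2)\Delta}u_0$ first in $BMO$ and then using that $\nabla^k e^{(t/2)\Delta}$ has a kernel which, crucially, has zero mean (for $k\ge 1$) so it pairs with a $BMO$ function — not merely with an $L^\infty$ function — picking up only a logarithmically divergent constant that is in fact finite because the kernel decays like $t^{-(d+k)/2}$ times a Gaussian and one estimates $\int |x|\,|\nabla^{k}G_{t/2}(x)|\,dx$ against the oscillation. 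So the heart of the matter is the first term: $\sup_{t>0}\|e^{t\Delta}u_0\|_{BMO} \le C\|u_0\|_{BMO}$.

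For that first term I would use the grand-maximal-function (or heat-maximal-function) description of $\cH^1$: $\|g\|_{\cH^1} \approx \|\sup_{s>0} |e^{s\Delta} g|\|_{L^1}$. Then for $g \in \cH^1 \cap L^2$ (a dense class) and $u_0 \in BMO \cap L^2$ one computes, using self-adjointness and the semigroup property,
\begin{align}
\langle e^{t\Delta} u_0, g\rangle = \langle u_0, e^{t\Delta} g\rangle,
\end{align}
and then one must show $\|e^{t\Delta} g\|_{\cH^1} \le C\|g\|_{\cH^1}$ uniformly in $t$, which is exactly the dual statement and reduces to the same maximal-function bound: $\sup_{s>0} |e^{s\Delta}(e^{t\Delta}g)| = \sup_{s>0}|e^{(s+t)\Delta}g| \le \sup_{r>0}|e^{r\Delta}g|$, so in fact $\|e^{t\Delta}g\|_{\cH^1}\le \|g\|_{\cH^1}$ with constant $1$. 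Hence $|\langle e^{t\Delta}u_0, g\rangle| \le \|u_0\|_{BMO}\|g\|_{\cH^1}$, and taking the supremum over $g$ in the unit ball of $\cH^1$ gives $\|e^{t\Delta}u_0\|_{BMO}\le C\|u_0\|_{BMO}$. The remaining subtlety — that $e^{t\Delta}u_0$ is only defined modulo constants and one must check the heat semigroup extends continuously to $BMO$ and solves the equation in the appropriate weak sense — is handled by the usual Carleson-measure / tent-space formulation, or simply by noting $\nabla e^{t\Delta}u_0 \in L^\infty$ (proved above) so that $e^{t\Delta}u_0$ has a well-defined representative up to an additive constant and the PDE holds classically for $t>0$.

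I expect the main obstacle to be the derivative-term estimates rather than the $BMO$-boundedness per se: one must be careful that $\nabla^k e^{t\Delta}: L^\infty \to L^\infty$ with bound $C t^{-k/2}$ is \emph{false-looking but true} for $k\ge 1$ only because $\int \nabla^k G_t = 0$, and to get the $BMO$ (rather than $L^\infty$) hypothesis to suffice one genuinely needs the mean-zero cancellation of the kernel together with its first-moment bound $\int_{\bR^d}|y|\,|\nabla_y \nabla^{k-1}G_t(y)|\,dy \le C t^{-(k-1)/2}$, combined with the John–Nirenberg-type control $\fint_{B_r(x)}|u_0 - (u_0)_{B_r(x)}| \le C\|u_0\|_{BMO}$ summed dyadically over annuli — the dyadic sum converging precisely because of the Gaussian tail. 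Everything else (density of $BMO\cap L^2$ in $BMO$ in the weak-$\ast$ sense, passing the estimates to the limit, identifying the solution) is routine and can be cited from \citet{Stein1993} and \citet{Fefferman1972}, or quoted directly from \citet{Bolkart2018}.
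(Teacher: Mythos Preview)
Your proposal is correct and follows exactly the approach the paper sketches: the paper does not prove this theorem (it is quoted from \citet{Bolkart2018}) but describes the idea as ``define the norm of Hardy space $\cH^1$ through the convolution with the heat kernel and use the duality by \citet{Fefferman1972}'' --- precisely your argument that $\sup_{s>0}|e^{(s+t)\Delta}g|\le\sup_{r>0}|e^{r\Delta}g|$ gives $\|e^{t\Delta}g\|_{\cH^1}\le\|g\|_{\cH^1}$ and hence the $BMO$ bound by duality. The paper in fact reproduces this same duality computation verbatim in its proof of Lemma~\ref{le:semigplocalbmo} (the $bmo$ analogue), so your write-up of the $BMO$ term aligns with the paper's own method; your treatment of the derivative terms via the mean-zero property of $\nabla^k G_t$ and dyadic annular decomposition is standard and consistent with what the paper cites (Lemma~\ref{le:SemGpBMO}) for the same estimates.
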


\bigskip

The following two lemmas exhibit several H\"older-type estimates for the heat semigroup in $BMO$ and the Besov spaces. For the detailed definitions of Besov spaces, the reader may refer to \citet{Bahouri2011}, \citet{Triebel2010} and \citet{Kozono2003}.
\begin{lemma}[\citet{Giga1999} and \citet{Kozono2003}]\label{le:SemGpBMO}
For all $f\in BMO(\bR^d)$ we have the estimate
\begin{align}
\left\|(-\Delta)^\alpha e^{t\Delta}f\right\|_{L^\infty}\lesssim t^{-\alpha}\|f\|_{BMO}\ .
\end{align}
where $\alpha>0$.
\end{lemma}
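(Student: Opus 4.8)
The plan is to write $v(t):=(-\Delta)^\alpha e^{t\Delta}f$ as a convolution $v(t)=K_t*f$, where $K_t$ is the kernel with Fourier transform $\widehat{K_t}(\xi)=|\xi|^{2\alpha}e^{-t|\xi|^2}$, and to extract the factor $t^{-\alpha}$ from the parabolic scaling $K_t(x)=t^{-\alpha-d/2}K\bigl(t^{-1/2}x\bigr)$, $K:=K_1$, while using the cancellation $\int_{\bR^d}K=\widehat{K}(0)=0$ — which holds precisely because $\alpha>0$ — to let a $BMO$ seminorm, rather than an $L^\infty$ norm, appear on the right-hand side.

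The one genuinely technical input is a pointwise bound $|K(z)|\le C(1+|z|)^{-d-\gamma}$ for some fixed $\gamma\in(0,2\alpha]$. For $|z|\le 1$ this is immediate from $\|K\|_{L^\infty}\le\bigl\|\,|\xi|^{2\alpha}e^{-|\xi|^2}\bigr\|_{L^1}<\infty$. For $|z|\ge 1$ I would split the inverse Fourier integral with a smooth cutoff supported near the origin: the far piece is Schwartz and transforms to $O(|z|^{-N})$ for every $N$, while on the near piece one may replace $e^{-|\xi|^2}$ by $1$ up to a Schwartz error, leaving a compactly supported cutoff of the homogeneous symbol $|\xi|^{2\alpha}$, whose inverse transform decays like $|z|^{-d-2\alpha}$ away from the origin (the $|\xi|^{2\alpha}\leftrightarrow|z|^{-d-2\alpha}$ homogeneity). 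The limited smoothness of $|\xi|^{2\alpha}$ at $\xi=0$ is the only obstruction to faster decay, but any $\gamma>0$ is enough below.

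With this in hand I would run the standard pairing of a mean-zero kernel against a $BMO$ function. Fix $x\in\bR^d$, put $r=\sqrt t$ and $B_j=B(x,2^jr)$, and use $\int K_t=0$ to write
\[
v(t,x)=\int_{\bR^d}K_t(x-y)\bigl(f(y)-f_{B_0}\bigr)\,dy=\biggl(\int_{B_0}+\sum_{j\ge 0}\int_{B_{j+1}\setminus B_j}\biggr)K_t(x-y)\bigl(f(y)-f_{B_0}\bigr)\,dy.
\]
On $B_0$ one bounds $|K_t|\le t^{-\alpha-d/2}\|K\|_{L^\infty}$ and $\int_{B_0}|f-f_{B_0}|\lesssim r^d\|f\|_{BMO}$, giving $\lesssim t^{-\alpha}\|f\|_{BMO}$. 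On the $j$-th annulus $|K_t(x-y)|\lesssim t^{-\alpha-d/2}2^{-j(d+\gamma)}$, and the telescoping estimate $\int_{B_{j+1}\setminus B_j}|f-f_{B_0}|\le\int_{B_{j+1}}|f-f_{B_{j+1}}|+|B_{j+1}|\,|f_{B_{j+1}}-f_{B_0}|\lesssim (2^jr)^d(j+1)\|f\|_{BMO}$ makes the $j$-th term $\lesssim t^{-\alpha}(j+1)2^{-j\gamma}\|f\|_{BMO}$; summing the convergent series $\sum_{j\ge 0}(j+1)2^{-j\gamma}$ and taking the supremum over $x$ closes the estimate.

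So the main obstacle is the kernel decay bound of the second paragraph — quantifying how the non-smoothness of $|\xi|^{2\alpha}$ at the origin limits the spatial decay of $K$. If one prefers to avoid Fourier-side kernel estimates, there is a softer route when $0<\alpha<1$: insert the Balakrishnan formula $(-\Delta)^\alpha e^{t\Delta}=c_\alpha\int_0^\infty s^{-1-\alpha}\bigl(e^{t\Delta}-e^{(t+s)\Delta}\bigr)\,ds$, write $e^{t\Delta}-e^{(t+s)\Delta}=\int_0^s(-\Delta)e^{(t+\tau)\Delta}\,d\tau$, and invoke the $\alpha=1$ case $\|\Delta e^{r\Delta}f\|_{L^\infty}\lesssim r^{-1}\|f\|_{BMO}$, which is immediate from Theorem~\ref{th:semigpBMO}; the resulting double integral is $C_\alpha t^{-\alpha}\|f\|_{BMO}$ by scaling. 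The range $\alpha\ge 1$ then follows by factoring $(-\Delta)^\alpha e^{t\Delta}=(-\Delta)^{\alpha-\lfloor\alpha\rfloor}e^{(t/2)\Delta}\circ(-\Delta)^{\lfloor\alpha\rfloor}e^{(t/2)\Delta}$ and composing with $L^\infty\to L^\infty$ heat factors.
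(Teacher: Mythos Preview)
Your proposal is correct. The paper does not give a self-contained proof here; it only sketches two routes from the literature: \citet{Giga1999} via a maximal-function bound on $(-\Delta)^\alpha e^{t\Delta}f$, and \citet{Kozono2003} via computing $\|(-\Delta)^\alpha G_t\|_{\cH^1}$ and invoking $\cH^1$--$BMO$ duality.

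Your main argument is essentially the hands-on unwinding of the Kozono route. The two ingredients you isolate---$\int K=0$ (from $\widehat K(0)=0$, using $\alpha>0$) and the decay $|K(z)|\lesssim(1+|z|)^{-d-\gamma}$---are exactly what make $K$ an $\cH^1$-molecule, and your dyadic-annulus pairing of $K_t$ against $f-f_{B_0}$ is precisely the calculation underlying the bound $|\langle K_t,f\rangle|\lesssim\|K_t\|_{\cH^1}\|f\|_{BMO}$, together with the scaling $\|K_t\|_{\cH^1}=t^{-\alpha}\|K\|_{\cH^1}$. So the content is the same; the duality route is shorter once the $\cH^1$ machinery is assumed, while yours is self-contained and avoids any black box. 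The Giga maximal-function approach is genuinely different from what you do.

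Your Balakrishnan alternative is a third route, not mentioned in the paper: it reduces everything to the $\alpha=1$ case already supplied by Theorem~\ref{th:semigpBMO}, sidestepping the large-$|z|$ asymptotics of $K$ entirely. This is a clean reduction; the price is that the extension to $\alpha\ge 1$ needs, in addition to the factorization you describe, the standard $L^\infty\!\to\! L^\infty$ bound $\|(-\Delta)^k e^{s\Delta}\|_{L^\infty\to L^\infty}\lesssim s^{-k}$ for integer $k$, which you should state explicitly (it follows from direct heat-kernel differentiation and Young's inequality).
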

The proof by \citet{Giga1999} is based on the estimate for the maximal function of $(-\Delta)^\alpha e^{t\Delta}f$. The idea by \citet{Kozono2003} is to compute $\|(-\Delta)^\alpha G_t\|_{\cH^1}$ ($G_t$ is the heat kernel) and use $\cH^1$-$BMO$ duality.

\begin{lemma}[\citet{Kozono2003}]\label{le:HolderEstBesov}
If $s_0\le s_1$, $1\le p,q\le \infty$, then there holds
\begin{align}
\|e^{t\Delta}f\|_{\dot B_{p,q}^{s_1}} &\lesssim t^{-\frac{1}{2}(s_1-s_0)}\|f\|_{\dot B_{p,q}^{s_0}} \label{eq:HeatHomBesov}
\\
\|e^{t\Delta}f\|_{B_{p,q}^{s_1}} &\lesssim \left(1+t^{-\frac{1}{2}(s_1-s_0)}\right)\|f\|_{B_{p,q}^{s_0}} \label{eq:HeatNonHomBesov}
\\
\|e^{t\Delta}f\|_{B_{p,1}^{s_1}} &\lesssim \left(1+t^{-\frac{1}{2}(s_1-s_0)}\right)\ln(e+t^{-1})\|f\|_{B_{p,\infty}^{s_0}} \label{eq:HeatNHBesovLog}
\end{align}
for all $t>0$. If $s_0<s_1$, $1\le p\le \infty$, then there holds
\begin{align}\label{eq:HeatHomBesovLog}
\|e^{t\Delta}f\|_{\dot B_{p,1}^{s_1}} &\lesssim t^{-\frac{1}{2}(s_1-s_0)}\|f\|_{\dot B_{p,\infty}^{s_0}}\ .
\end{align}
\end{lemma}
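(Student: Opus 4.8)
The plan is to reduce everything to the Littlewood–Paley characterization of Besov norms and the standard heat-kernel bounds on each dyadic block. Let $\Delta_j$ denote the Littlewood–Paley projector at frequency $\sim 2^j$, so that $\|f\|_{\dot B^s_{p,q}} = \big\|(2^{js}\|\Delta_j f\|_{L^p})_j\big\|_{\ell^q(\mathbb Z)}$ and similarly for the inhomogeneous norm with $j\ge -1$. The one fact I would isolate first is the kernel estimate: there is a constant $c>0$ such that for $j\ge 0$ (or $j\ge 1$ in the homogeneous case),
\begin{align}
\|e^{t\Delta}\Delta_j f\|_{L^p} \lesssim e^{-c\,2^{2j} t}\,\|\Delta_j f\|_{L^p},
\end{align}
which follows from writing $e^{t\Delta}\Delta_j$ as convolution with a function whose $L^1$ norm is $O(e^{-c 2^{2j} t})$ (rescale the heat kernel and use that the symbol is supported in an annulus), and Young's inequality. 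For the low block $j=-1$ in the inhomogeneous case one only has the trivial bound $\|e^{t\Delta}\Delta_{-1} f\|_{L^p}\lesssim \|\Delta_{-1}f\|_{L^p}$, and this is the source of the additive ``$1+$'' in \eqref{eq:HeatNonHomBesov}–\eqref{eq:HeatNHBesovLog}.

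For \eqref{eq:HeatHomBesov}: multiply the kernel bound by $2^{js_1}$, write $2^{js_1}=2^{js_0}\cdot 2^{j(s_1-s_0)}$, and use $2^{j(s_1-s_0)}e^{-c2^{2j}t}\lesssim t^{-\frac12(s_1-s_0)}\,(2^{2j}t)^{\frac12(s_1-s_0)}e^{-c2^{2j}t}\lesssim t^{-\frac12(s_1-s_0)}$ since $\sup_{r>0} r^{\beta}e^{-cr}<\infty$ for $\beta=\frac12(s_1-s_0)\ge 0$; then take $\ell^q$ norms — the $\ell^q\to\ell^q$ bound is immediate because we have a pointwise-in-$j$ multiplier bound. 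For \eqref{eq:HeatNonHomBesov}, do the same for $j\ge 1$ and add the trivial low-frequency term, giving the $1+t^{-\frac12(s_1-s_0)}$ factor. For \eqref{eq:HeatHomBesovLog} and \eqref{eq:HeatNHBesovLog}, the new feature is passing from an $\ell^\infty$ assumption on the input to an $\ell^1$ conclusion on the output; here one estimates
\begin{align}
\sum_j 2^{js_1}\|e^{t\Delta}\Delta_j f\|_{L^p} \lesssim \Big(\sum_j 2^{j(s_1-s_0)}e^{-c2^{2j}t}\Big)\,\|f\|_{\dot B^{s_0}_{p,\infty}},
\end{align}
and the remaining task is to bound the scalar sum $S(t):=\sum_{j} 2^{j(s_1-s_0)}e^{-c2^{2j}t}$. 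Splitting at $2^{2j}\sim t^{-1}$: the high-frequency tail contributes $\lesssim t^{-\frac12(s_1-s_0)}$ by comparison with $\int_0^\infty r^{\frac12(s_1-s_0)-1}e^{-cr}\,dr$ when $s_1>s_0$ (this is exactly why \eqref{eq:HeatHomBesovLog} needs strict inequality), while the low-frequency part $\sum_{2^{2j}\lesssim t^{-1}} 2^{j(s_1-s_0)}$ is a geometric sum dominated by its top term, again $\lesssim t^{-\frac12(s_1-s_0)}$. When $s_1=s_0$ the high-frequency sum is instead $\sum_{2^{2j}\gtrsim t^{-1}} e^{-c2^{2j}t}=O(1)$ but the low-frequency sum $\sum_{0\le j:\,2^{2j}\lesssim t^{-1}} 1$ has $\sim \log_2(t^{-1})$ terms, producing the $\ln(e+t^{-1})$ factor in \eqref{eq:HeatNHBesovLog} (the $e+$ and the inhomogeneous ``$1+$'' handle $t\ge 1$ and the finitely many negative $j$).

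The main obstacle is purely bookkeeping rather than conceptual: one must handle the three regimes $s_1>s_0$, $s_1=s_0$, and the $t\to 0$ versus $t\to\infty$ behavior uniformly, and in the inhomogeneous cases keep careful track of the lowest block so that the estimates degrade to a constant (not blow up) as $t\to\infty$. I would also note that \eqref{eq:HeatHomBesov} with $s_1=s_0$ recovers the uniform boundedness $\|e^{t\Delta}\|_{\dot B^{s}_{p,q}\to\dot B^{s}_{p,q}}\lesssim 1$, and that Lemma~\ref{le:SemGpBMO} can be seen as the endpoint companion of \eqref{eq:HeatHomBesov} via the embedding $BMO\hookrightarrow \dot B^0_{\infty,\infty}$ together with $\dot B^{2\alpha}_{\infty,1}\hookrightarrow L^\infty$, so the two lemmas are proved by the same mechanism.
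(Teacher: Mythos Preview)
Your proposal is correct and matches the paper's own (very brief) sketch almost exactly: the paper says that \eqref{eq:HeatHomBesov}--\eqref{eq:HeatNonHomBesov} come from the $L^p$ estimate on each Littlewood--Paley block via Young's inequality, and that \eqref{eq:HeatNHBesovLog}--\eqref{eq:HeatHomBesovLog} come from a high/low frequency truncation together with ``interpolation inequalities in the Besov spaces.'' Your blockwise bound $\|e^{t\Delta}\Delta_j f\|_{L^p}\lesssim e^{-c2^{2j}t}\|\Delta_j f\|_{L^p}$ plus the pointwise multiplier estimate is exactly the first step, and your split at $2^{2j}\sim t^{-1}$ is exactly the truncation step.

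The only (minor) methodological difference is in how the $\ell^\infty\to\ell^1$ passage is closed: the paper alludes to Besov interpolation inequalities (in Kozono--Ogawa--Taniuchi this is a log-type interpolation $\|g\|_{B^{s}_{p,1}}\lesssim \|g\|_{B^{s-\varepsilon}_{p,\infty}}\log\!\big(e+\|g\|_{B^{s+\varepsilon}_{p,\infty}}/\|g\|_{B^{s-\varepsilon}_{p,\infty}}\big)$ applied to $g=e^{t\Delta}f$), whereas you bound the scalar sum $\sum_j 2^{j(s_1-s_0)}e^{-c2^{2j}t}$ directly. Your route is slightly more elementary and makes the origin of the $\ln(e+t^{-1})$ factor in the borderline case $s_1=s_0$ completely transparent; the interpolation route has the advantage that it packages the frequency split into a reusable black-box inequality. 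Either way the content is the same.

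One small caution on your closing remark: Lemma~\ref{le:SemGpBMO} is not quite obtained by the chain $BMO\hookrightarrow \dot B^0_{\infty,\infty}$ plus \eqref{eq:HeatHomBesov}, since the target there is $L^\infty$ rather than a Besov space and one needs $\alpha>0$ strictly; the paper's cited proofs go instead through $\mathcal H^1$--$BMO$ duality or maximal functions. This does not affect the validity of your proof of the present lemma.
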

The proof of \eqref{eq:HeatHomBesov} and \eqref{eq:HeatNonHomBesov} is based on the $L^p$ estimate for each $\phi_j*f$ (mode of frequency) by using Young's inequality. The idea for \eqref{eq:HeatNHBesovLog} and \eqref{eq:HeatHomBesovLog} is to truncate the Besov norms by high frequency and low frequency terms and apply some interpolation inequalities in the Besov spaces.

\bigskip

Before showing results on equivalency and embedding, we provide a short introduction of Triebel-Lizorkin-type spaces. For the detailed definitions of Triebel-Lizorkin spaces and their variations, see \citet{Triebel2010} and \citet{Yuan2010}.
\begin{definition}
Let $s\in\bR$, $p\in(0,\infty)$ and $q\in(0,\infty]$. Let $\{\phi_j\}_{j\in\bZ}$ be Littlewood-Paley decomposition (homogeneous one) and $\cP(\bR^d)$ be the set of all polynomials. The \textit{Triebel-Lizorkin space} $\dot F^s_{p,q}(\bR^d)$ is the set of all $f\in \mathscr{S}'(\bR^d)/\cP(\bR^d)$ such that
\begin{align*}
\|f\|_{\dot F^s_{p,q}}:= \left\|\left(\sum_{j\in\bZ}\left(2^{js}|\phi_j*f|\right)^q\right)^{1/q}\right\|_{L^p(\bR^d)}<\infty\
\end{align*}
where the $\ell^q$-norm is replaced by the supremum on $j$ if $q=\infty$. For $p=\infty$, $\dot F^s_{p,q}(\bR^d)$ is defined to be the set of all $f\in \mathscr{S}'(\bR^d)/\cP(\bR^d)$ such that
\begin{align*}
\|f\|_{\dot F^s_{\infty,q}}:=\underset{Q\in\mathscr{D}}{\sup}\frac{1}{|Q|^{1/q}} \left\|\sum_{j=-\ln l(Q)}^\infty\left(2^{js}|\phi_j*f|\right)^q\right\|_{L^1(Q)}^{1/q}<\infty\
\end{align*}
where $\mathscr{D}$ denote the collection of all dyadic cubes and $l(Q)$ is the side length of $Q$.

Let $s,\tau\in\bR$, $p\in(0,\infty)$ and $q\in(0,\infty]$. Let $\{\phi_j\}_{j\in\bZ}$ be Littlewood-Paley decomposition (homogeneous one). The \textit{Triebel-Lizorkin-Morrey space} is defined to be the set of all $f\in \mathscr{S}'(\bR^d)/\cP(\bR^d)$ such that
\begin{align*}
\|f\|_{\dot F^{s,\tau}_{p,q}}:=\underset{Q\in\mathscr{D}}{\sup}\frac{1}{|Q|^\tau} \left\|\left(\sum_{j=-\ln l(Q)}^\infty\left(2^{js}|\phi_j*f|\right)^q\right)^{1/q}\right\|_{L^p(Q)}<\infty\
\end{align*}
where $\mathscr{D}$ and $l(Q)$ are as above. For $p=\infty$, modification of the norm should be made also.

The definitions of Triebel-Lizorkin spaces of non-homogeneous type are defined in a similar fashion (see \citet{Yuan2010} for details).
\end{definition}

\bigskip

The following two theorems connect $BMO$-type and Triebel-Lizorkin spaces. Both are proved by the equivalency $\mathfrak{h}^p=F^0_{p,2}$ (resp. $\cH^p=\dot F^0_{p,2}$) and the dualities $F^0_{\infty,2}\approx (F^0_{1,2})^*\approx (\mathfrak{h}^1)^*\approx bmo$ (resp. $\dot F^0_{\infty,2}\approx (\dot F^0_{1,2})^*\approx (\cH^1)^*\approx BMO$). For the details of the proofs, see \citet{Triebel2010}.
\begin{theorem}[\citet{Triebel2010}, Page 93, Theorem 2]
The following equality holds
\begin{align}\label{eq:bmoEqTLsp}
bmo(\bR^d)=F_{\infty,2}^0(\bR^d)
\end{align}
with norm equivalence.
\end{theorem}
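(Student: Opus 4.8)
The plan is to follow the two–step scheme indicated above: first identify the predual, then dualize. Recall Goldberg's local Hardy space $\mathfrak h^1(\bR^d)$; its inhomogeneous Littlewood--Paley characterization is the norm equivalence $\mathfrak h^1=F^0_{1,2}$, which is classical and recorded in \citet{Triebel2010}. Recall also Goldberg's duality theorem $(\mathfrak h^1)^*=bmo$, with integration as the pairing, where $bmo(\bR^d)$ consists of the locally integrable $f$ with
\begin{align*}
\sup_{l(Q)\le 1}\fint_Q|f-f_Q|\,dx+\sup_{l(Q)\ge 1}\fint_Q|f|\,dx<\infty\ .
\end{align*}
Granting these two facts, it suffices to prove the abstract duality $F^0_{\infty,2}=(F^0_{1,2})^*$ with equivalent norms and the natural pairing; the chain
\begin{align*}
F^0_{\infty,2}\;=\;(F^0_{1,2})^*\;=\;(\mathfrak h^1)^*\;=\;bmo
\end{align*}
then yields the theorem, the equivalence constants being obtained by composing those of the three identifications.

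For the duality $F^0_{\infty,2}=(F^0_{1,2})^*$ I would use the $\varphi$-transform of Frazier and Jawerth to realize both spaces as retracts of sequence spaces. For a suitable reproducing pair of Littlewood--Paley families, the analysis operator $D_\varphi$ sends $f$ to the array of its Littlewood--Paley pieces sampled on dyadic cubes and is bounded $F^0_{1,2}\to f^0_{1,2}$ and $F^0_{\infty,2}\to f^0_{\infty,2}$, the synthesis operator $S_\varphi$ is bounded in the reverse direction, and $S_\varphi D_\varphi=\mathrm{Id}$ on each space. On the sequence side the duality is transparent: the Carleson-type space $f^0_{\infty,2}$ — precisely the dyadic-cube averaged norm appearing in the Definition, with the frequency sum truncated at the scale of the cube and at unit scale — is the K\"othe dual of $f^0_{1,2}$, because an $\ell^2$-valued $L^1$ condition is dual, in the sense of averages over dyadic cubes, to an $\ell^2$-valued Carleson condition. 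Transporting a functional on $F^0_{1,2}$ through $D_\varphi$, pairing on the sequence side, and pulling back through $S_\varphi$ identifies it with integration against an element of $F^0_{\infty,2}$, and conversely; this gives the claimed duality with the integration pairing, interpreted modulo polynomials (here constants) as the Definition requires.

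The main obstacle is the $p=\infty$ endpoint of the Triebel--Lizorkin scale itself. The naive norm $\big\|(\sum_j|\varphi_j*f|^2)^{1/2}\big\|_{L^\infty}$ does not produce a space in duality with $F^0_{1,2}$, which is exactly why the Definition above uses the dyadic-cube averaged, Carleson-type expression rather than an $L^\infty$ of a square function. The technical heart is therefore (i) to verify that $S_\varphi$ actually lands in this Carleson space, and (ii) to establish the pairing estimate against $f^0_{1,2}$ — a tent-space duality, or equivalently a stopping-time (good-$\lambda$) argument, bounding $\sum_Q|s_Q|\,|t_Q|$ by $\|s\|_{f^0_{1,2}}\|t\|_{f^0_{\infty,2}}$. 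Once these are in hand, the remainder is bookkeeping with the reproducing formula, and tracking the constants through $\mathfrak h^1=F^0_{1,2}$ and $(\mathfrak h^1)^*=bmo$ delivers the norm-equivalence statement $bmo(\bR^d)=F^0_{\infty,2}(\bR^d)$.
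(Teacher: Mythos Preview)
Your proposal is correct and follows essentially the same approach as the paper: the paper explicitly records that the result is obtained via the chain $F^0_{\infty,2}\approx (F^0_{1,2})^*\approx (\mathfrak{h}^1)^*\approx bmo$, using the Littlewood--Paley identification $\mathfrak{h}^1=F^0_{1,2}$ together with Goldberg's duality, and refers to \citet{Triebel2010} for the details. Your additional discussion of the Frazier--Jawerth $\varphi$-transform and the sequence-space duality $f^0_{\infty,2}=(f^0_{1,2})^*$ is a reasonable way to flesh out the step $F^0_{\infty,2}=(F^0_{1,2})^*$, which the paper leaves to the reference.
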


\begin{theorem}[\citet{Triebel2010}, Theorem on Page~244, or \citet{Yuan2010}, Section~1.4.4]
The following equality holds
\begin{align}\label{eq:BMOEqTLsp}
BMO(\bR^d)=\dot F_{\infty,2}^0(\bR^d)
\end{align}
with equivalent quasi-norms.
\end{theorem}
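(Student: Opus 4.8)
The plan is to deduce \eqref{eq:BMOEqTLsp} from the duality chain
\[
\dot F^0_{\infty,2}(\bR^d)=\bigl(\dot F^0_{1,2}(\bR^d)\bigr)^*=\bigl(\cH^1(\bR^d)\bigr)^*=BMO(\bR^d),
\]
each equality being a Banach-space isomorphism with two-sided norm bounds, exactly along the route indicated just before the statement. Assembling the three identifications then yields the equality of sets together with the equivalence of (quasi-)norms.

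First I would record the square-function (Littlewood--Paley $g$-function) characterization of the real Hardy space, $\dot F^0_{1,2}(\bR^d)=\cH^1(\bR^d)$ with equivalent norms. The inequality $\|f\|_{\dot F^0_{1,2}}\lesssim\|f\|_{\cH^1}$ follows from the atomic decomposition of $\cH^1$ together with the $L^1$-bound for the vector-valued square function $f\mapsto(\sum_j|\phi_j*f|^2)^{1/2}$ applied to atoms, plus a tail (molecular) estimate using the smoothness and decay of the $\phi_j$; the reverse inequality uses the Calder\'on reproducing formula $f=\sum_j\tilde\phi_j*\phi_j*f$ to synthesize $f$ from its frequency pieces and the maximal/area-function characterization of $\cH^1$. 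This step, and Fefferman's theorem $(\cH^1)^*=BMO$ used in the last equality of the chain, I would simply cite; both are classical (see \citet{Stein1993}).

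Next comes the core step: the Triebel--Lizorkin duality $(\dot F^0_{1,2})^*=\dot F^0_{\infty,2}$ with equivalent norms. I would realize the analysis operator $S:f\mapsto\{\phi_j*f\}_{j\in\bZ}$ as a retraction onto a complemented subspace of a sequence/tent space: for $p=1$ the target is the discrete tent space with norm $\|(\sum_j|g_j|^2)^{1/2}\|_{L^1}$, and for $p=\infty$ it is the Carleson-box space whose norm is the supremum over dyadic cubes appearing in the definition above. The companion synthesis operator $R:\{g_j\}\mapsto\sum_j\tilde\phi_j*g_j$ satisfies $R\circ S=\mathrm{id}$ on $\mathscr S'/\cP$, so $\dot F^0_{1,2}$ is complemented in the tent space and its dual is identified with the corresponding quotient of the dual tent space. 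The genuinely nontrivial input is then the tent-space duality of Coifman--Meyer--Stein type, $(T^1_2)^*=T^\infty_2$, whose proof is a stopping-time/sawtooth argument: boundedness of a Carleson functional on $T^1_2$ is obtained by decomposing a $T^1_2$-atom over the sawtooth region attached to a level set and summing a geometric series, while conversely a bounded functional is shown to be represented by a Carleson measure by testing on tents over dyadic cubes and self-improving via a John--Nirenberg-type argument. Transporting this back through $S$ and $R$ gives $(\dot F^0_{1,2})^*=\dot F^0_{\infty,2}$; care is needed because in the homogeneous setting the pairing is only well defined modulo polynomials, so one works in $\mathscr S'/\cP$ throughout and checks that the retraction respects this quotient.

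The main obstacle is precisely this last duality $(\dot F^0_{1,2})^*=\dot F^0_{\infty,2}$ --- equivalently, the hard inclusion $\dot F^0_{\infty,2}\hookrightarrow BMO$ --- which cannot be obtained by naive H\"older/$\ell^2$-duality (that would only give $L^\infty(\ell^2)$, a strictly larger space) and genuinely requires the stopping-time machinery together with the Carleson-measure characterization of $BMO$. As a cross-check on the opposite, easy inclusion $\|f\|_{\dot F^0_{\infty,2}}\lesssim\|f\|_{BMO}$ one can run the classical Fefferman--Stein argument: fix a dyadic cube $Q$ and write $f=(f-f_{2Q})\1_{2Q}+(f-f_{2Q})\1_{(2Q)^c}+f_{2Q}$; for the first term use $L^2$-boundedness of the square function and John--Nirenberg, for the second the kernel decay of the $\phi_j$ (here it is essential that the sum over $Q$ in $\|f\|_{\dot F^0_{\infty,2}}$ starts at $j=-\ln l(Q)$, killing the low frequencies), and the constant $f_{2Q}$ is annihilated by each $\phi_j$. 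Combining the three isomorphisms with their norm bounds then gives $BMO(\bR^d)=\dot F^0_{\infty,2}(\bR^d)$ with equivalent quasi-norms.
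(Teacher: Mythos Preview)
Your proposal is correct and follows exactly the route the paper itself sketches immediately before the statement: the paper does not give a self-contained proof but simply records the duality chain $\dot F^0_{\infty,2}\approx(\dot F^0_{1,2})^*\approx(\cH^1)^*\approx BMO$, using the Littlewood--Paley identification $\cH^1=\dot F^0_{1,2}$ and Fefferman's duality $(\cH^1)^*=BMO$, and then refers the reader to \citet{Triebel2010} for the details. Your write-up fleshes out precisely these three links (and correctly flags the Triebel--Lizorkin duality $(\dot F^0_{1,2})^*=\dot F^0_{\infty,2}$ as the substantive one), so there is nothing to add.
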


\bigskip

Applying a result in \citet{Frazier1990} (representation of $\dot F_{p,q}^s$ in the sequence space indexed by the dyadic system $\mathscr{D}$), \citet{Yuan2010} proved that
\begin{theorem}[\citet{Yuan2010}, Proposition~2.4, or \citet{Frazier1990}, Corollary~5.7]
Let $s\in \bR$, $p\in(0,\infty)$ and $q\in (0,\infty]$. Then the followings hold with equivalent norms and equivalent quasi-norms respectively:
\begin{align}
F_{p,q}^{s,1/p}(\bR^d) &= F_{\infty,q}^s(\bR^d) \label{eq:NonHomTLspVarEq}
\\
\dot F_{p,q}^{s,1/p}(\bR^d) &=\dot F_{\infty,q}^s(\bR^d) \label{eq:HomTLspVarEq}
\end{align}
\end{theorem}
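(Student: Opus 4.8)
The plan is to pass to the dyadic lattice by means of the $\varphi$-transform of \citet{Frazier1990}, reduce both identities to a single self-improvement estimate for ``tent sums'', and prove that estimate by a Calder\'on--Zygmund stopping-time decomposition.

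\emph{Reduction to sequence spaces.} Fix an admissible analysis/synthesis pair $(\varphi,\psi)$ in the sense of Frazier and Jawerth and, for a dyadic cube $R$ of side $2^{-j}$, put $s_R(f):=\langle f,\varphi_R\rangle$. The $\varphi$-transform realizes each of $\dot F^s_{p,q}$, the Triebel--Lizorkin--Morrey space $\dot F^{s,\tau}_{p,q}$, and $\dot F^s_{\infty,q}$ as a (quasi-)norm-equivalent sequence space $\dot f^s_{p,q}$, $\dot f^{s,\tau}_{p,q}$, $\dot f^s_{\infty,q}$ on the dyadic lattice, with bounded analysis and synthesis operators in both directions; the frequency truncation ``$j\ge -\log_2 l(Q)$'' in the definitions corresponds, via the standard localization estimates of the $\varphi$-transform, to summation over the dyadic subcubes $R\subseteq Q$ (a dyadic cube of side $\le l(Q)$ meeting $Q$ lies in $Q$). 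Hence it suffices to prove the purely discrete identity $\dot f^{s,1/p}_{p,q}=\dot f^s_{\infty,q}$ with equivalent (quasi-)norms; running the same reduction with the inhomogeneous $\varphi$-transform (dyadic cubes of side $\le1$, plus a low-frequency term contributing comparably to every average) gives \eqref{eq:NonHomTLspVarEq}, so I concentrate on \eqref{eq:HomTLspVarEq}.

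\emph{Reduction to a self-improvement.} For a dyadic cube $P$ set $H_P:=\sum_{R\subseteq P}\big(|R|^{-s/d-1/2}|s_R(f)|\big)^q\1_R$ (with the usual $\sup$ convention when $q=\infty$); note $H_{P'}\le H_P$ on $P'$ whenever $P'\subseteq P$. Writing $\theta:=p/q$, the definitions unwind to
\begin{align*}
\norm{\{s_R(f)\}}_{\dot f^{s,1/p}_{p,q}}=\sup_{P\in\mathscr{D}}\Big(\tfrac{1}{|P|}\int_P H_P^{\,\theta}\,dx\Big)^{1/p},\qquad \norm{\{s_R(f)\}}_{\dot f^{s}_{\infty,q}}=\sup_{P\in\mathscr{D}}\Big(\tfrac{1}{|P|}\int_P H_P\,dx\Big)^{1/q}.
\end{align*}
If $\theta\le1$, Jensen's inequality on the probability space $(P,|P|^{-1}dx)$ shows the first quantity is $\le$ the second; if $\theta\ge1$, H\"older's inequality shows the second is $\le$ the first. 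In either case one inclusion is free, and what remains is the single implication
\begin{align*}
\sup_{P\in\mathscr{D}}\tfrac{1}{|P|}\int_P H_P^{\min(1,\theta)}\,dx\le1\quad\Longrightarrow\quad \sup_{P\in\mathscr{D}}\tfrac{1}{|P|}\int_P H_P^{\max(1,\theta)}\,dx\lesssim1 .
\end{align*}

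\emph{The stopping-time argument (the hard part).} This implication is where the combinatorial structure is essential: $H_P=\sum_{R\subseteq P}a_R$ with each $a_R\ge0$ constant on and supported in $R$, and a generic function with bounded $L^{\min(1,\theta)}$-averages over all dyadic cubes need not have bounded $L^{\max(1,\theta)}$-averages. Consider $\theta\ge1$ and assume $\sup_P|P|^{-1}\int_P H_P\le1$. Fix $P$, fix a large absolute level $\lambda$, and let $\{P_k\}$ be the maximal dyadic subcubes $R\subseteq P$ with $|R|^{-1}\int_R H_P>\lambda$; disjointness and the hypothesis on $P$ give the packing bound $\sum_k|P_k|<\lambda^{-1}|P|\le\tfrac12|P|$, while on each $P_k$ one has $H_P\restriction_{P_k}=H_{P_k}+c_k\1_{P_k}$ with $c_k=\sum_{P_k\subsetneq R\subseteq P}a_R$ the ancestor contribution, and $c_k\le C_0$ absolute (the dyadic parent of $P_k$ has $H_P$-average $\le\lambda$, which also dominates the single coefficient $a_{\widehat{P_k}}$). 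Since $H_P$ is bounded a.e.\ on $P$ by its dyadic maximal function relative to $P$, these facts yield the recursion $\omega(\mu+C_0)\le\tfrac12\,\omega(\mu)$ for the distribution $\omega(\mu):=\sup_{P}|P|^{-1}\big|\{x\in P:H_P(x)>\mu\}\big|$, hence exponential decay $\omega(\mu)\lesssim e^{-c\mu}$, and then $\sup_P|P|^{-1}\int_P H_P^{\theta}=\theta\int_0^\infty\mu^{\theta-1}\omega(\mu)\,d\mu\lesssim_\theta1$ for every finite $\theta$; the case $q=\infty$ is elementary. The case $\theta\le1$ is the dual argument: run the analogous stopping time for $H_P$ starting from $\sup_P|P|^{-1}\int_P H_P^{\theta}\le1$, use $(x+y)^\theta\le x^\theta+y^\theta$ to absorb ancestor contributions along the stopping tree, and sum the resulting lacunary series to recover $\sup_P|P|^{-1}\int_P H_P\lesssim1$. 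These two self-improvements are exactly \citet[Corollary~5.7]{Frazier1990} and \citet[Proposition~2.4]{Yuan2010}, and together with the free inclusions they close the proof. The main obstacle throughout is precisely this John--Nirenberg-type self-improvement; everything else is bookkeeping with the $\varphi$-transform.
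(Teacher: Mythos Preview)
The paper does not supply a proof of this theorem at all: it is quoted as a known result, with the one-line remark that \citet{Yuan2010} established it by applying the sequence-space representation of $\dot F^s_{p,q}$ from \citet{Frazier1990}. There is therefore nothing in the paper to compare your argument against beyond that attribution.

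Your proposal is a faithful reconstruction of the Frazier--Jawerth route the paper points to: the $\varphi$-transform reduction to the discrete spaces $\dot f^{s,1/p}_{p,q}$ and $\dot f^s_{\infty,q}$, followed by a John--Nirenberg-type self-improvement for the tent sums $H_P$ via a Calder\'on--Zygmund stopping time. The overall strategy is correct and standard. Two small points of imprecision are worth tightening. First, your bound ``$c_k\le C_0$ absolute'' should read $c_k\le\lambda$: the maximality of $P_k$ gives $|\widehat{P_k}|^{-1}\int_{\widehat{P_k}}H_P\le\lambda$, and since $H_P\ge c_k$ pointwise on $\widehat{P_k}$ (the ancestors $R$ with $P_k\subsetneq R\subseteq P$ are exactly those with $\widehat{P_k}\subseteq R\subseteq P$), this yields $c_k\le\lambda$; the recursion then becomes $\omega(\mu+\lambda)\le\lambda^{-1}\omega(\mu)$, and fixing $\lambda=2$ gives the exponential decay you want. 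Second, the case $\theta\le1$ is only sketched; the clean way is to note that the stopping-time decomposition writes $H_P=\sum_\ell g_\ell$ with each layer $g_\ell$ supported on the $\ell$-th generation stopping cubes and satisfying $\fint g_\ell^\theta\lesssim1$ on each such cube, then use subadditivity of $x\mapsto x^\theta$ on the layers together with the geometric packing $\sum_k|P_k|\le\tfrac12|P|$ to sum. With these clarifications the argument goes through.
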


\bigskip

Now we are ready to prove the embedding results we quote in the proof of the main theorem.
\begin{lemma}\label{le:BMOEmbedBesov}
The following chains of continuous embeddings hold:
\begin{align}
&L^\infty \hookrightarrow bmo\hookrightarrow F_{\infty,\infty}^0=B^0_{\infty,\infty}
\\
&L^\infty \hookrightarrow bmo \hookrightarrow BMO\hookrightarrow \dot F_{\infty,\infty}^0=\dot B^0_{\infty,\infty}
\end{align}
\end{lemma}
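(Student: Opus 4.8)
The plan is to establish each arrow in the two chains separately, exploiting the three identifications already recorded: $bmo = F^0_{\infty,2}$, $BMO = \dot F^0_{\infty,2}$, and the Triebel–Lizorkin–Morrey identities $F^{s,1/p}_{p,q} = F^s_{\infty,q}$, $\dot F^{s,1/p}_{p,q} = \dot F^s_{\infty,q}$ from the preceding theorem. The two equalities $F^0_{\infty,\infty} = B^0_{\infty,\infty}$ and $\dot F^0_{\infty,\infty} = \dot B^0_{\infty,\infty}$ at the right ends of the chains are the standard coincidence of Triebel–Lizorkin and Besov scales when the second (fine) index equals $\infty$, which holds because the $\ell^\infty$-norm commutes with the $L^\infty$-norm in the defining expressions; I would cite \citet{Triebel2010} for this rather than reprove it.

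First I would treat $L^\infty \hookrightarrow bmo$: this is immediate from the definition of $bmo$, since for any cube $Q$ the local mean oscillation $\frac{1}{|Q|}\int_Q |f - f_Q|\,dx \le 2\|f\|_{L^\infty}$, and for cubes of side length $\ge 1$ one controls $\frac{1}{|Q|}\int_Q |f|\,dx \le \|f\|_{L^\infty}$ directly; hence $\|f\|_{bmo} \lesssim \|f\|_{L^\infty}$. Next, $bmo \hookrightarrow BMO$ is also definitional: the $BMO$ seminorm is the supremum over all cubes of the mean oscillation, while the $bmo$ norm is that supremum restricted to small cubes plus an $L^1_{\mathrm{loc}}$-average term on large cubes; since mean oscillation on a large cube is bounded by twice the average of $|f|$ there, one gets $\|f\|_{BMO} \lesssim \|f\|_{bmo}$ with no effort.

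The one inclusion requiring the Triebel–Lizorkin machinery is $bmo \hookrightarrow F^0_{\infty,\infty}$ (and its homogeneous analogue $BMO \hookrightarrow \dot F^0_{\infty,\infty}$). Here I would argue: $bmo = F^0_{\infty,2}$, and one has the monotonicity $F^0_{\infty,2} \hookrightarrow F^0_{\infty,\infty}$ in the fine index. The cleanest route is to pass through the Morrey identification — write $F^0_{\infty,2} = F^{0,1/p}_{p,2}$ and $F^0_{\infty,\infty} = F^{0,1/p}_{p,\infty}$ for a fixed $p \in (0,\infty)$ via \eqref{eq:NonHomTLspVarEq}, and then the embedding reduces to $F^{0,1/p}_{p,2} \hookrightarrow F^{0,1/p}_{p,\infty}$, which is the pointwise inequality $\left(\sum_j |2^{js}\phi_j * f|^\infty\right)^{1/\infty} = \sup_j |\phi_j * f| \le \left(\sum_j |\phi_j * f|^2\right)^{1/2}$ applied inside the Morrey supremum-average. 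The homogeneous case is identical using $BMO = \dot F^0_{\infty,2}$ and \eqref{eq:HomTLspVarEq}. I expect this last step — correctly invoking the fine-index monotonicity at $p = \infty$, where the $F^s_{\infty,q}$ norm is itself defined by a dyadic-cube supremum rather than a genuine $L^\infty$ norm — to be the only real subtlety; it is exactly what the Morrey reformulation is designed to finesse, since in the Morrey scale with $p < \infty$ the $\ell^q$-monotonicity is transparent.
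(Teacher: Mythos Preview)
Your proposal is correct and matches the paper's approach: the embeddings $L^\infty \hookrightarrow bmo \hookrightarrow BMO$ are dismissed by definition, and the key step $bmo = F^0_{\infty,2} \hookrightarrow F^0_{\infty,\infty}$ (and its homogeneous analogue) is handled via the Triebel--Lizorkin--Morrey identification \eqref{eq:NonHomTLspVarEq} together with $\ell^q$-monotonicity. The only cosmetic difference is that the paper applies the Morrey identity on the target side only (choosing $p=2$, so that the defining dyadic-cube norm of $F^0_{\infty,2}$ already has the Morrey form) and mentions H\"older's inequality alongside $\ell^p$-monotonicity, whereas you apply the identity symmetrically on both sides; the content is the same.
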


\begin{proof}
It suffices to show $bmo\hookrightarrow F_{\infty,\infty}^0$ and $BMO\hookrightarrow \dot F_{\infty,\infty}^0$; the rest easily follows by the definitions of $BMO$ and $L^\infty$. In the spirit of \eqref{eq:bmoEqTLsp} and \eqref{eq:NonHomTLspVarEq}, to prove $bmo(\bR^d)\hookrightarrow F_{\infty,\infty}^0(\bR^d)$ is equivalent to showing
\begin{align*}
F_{\infty,2}^0(\bR^d)\hookrightarrow F_{p,\infty}^{0,1/p}(\bR^d)\qquad\textrm{for some }p\in(0,\infty)\ .
\end{align*}
Pick $p=2$. By the definitions of Triebel-Lizorkin spaces and the variations, $F_{\infty,2}^0(\bR^d)\hookrightarrow F_{2,\infty}^{0,1/2}(\bR^d)$ is a consequence of the monotonicity of $\ell^p$-norms and H\"older's inequalities. This proves $bmo(\bR^d)\hookrightarrow F_{\infty,\infty}^0(\bR^d)$. It follows similarly that $BMO\hookrightarrow \dot F_{\infty,\infty}^0$.
\end{proof}

\bigskip

\begin{proof}[Proof of Lemma~\ref{le:semigplocalbmo}]
The boundedness of $\|\nabla u(t)\|_{L^\infty}$, $\|\nabla^2 u(t)\|_{L^\infty}$ and $\|\partial_t u\|_{L^\infty}$ is a simple corollary of Lemma~\ref{th:semigpBMO} since $bmo\hookrightarrow BMO$. It remains to show $\|u(t)\|_{bmo}\lesssim \|u_0\|_{bmo}$. Similar to \citet{Bolkart2018}, we define the Hardy space $\mathfrak{h}^1(\bR^d)$ as
\begin{align*}
\left\{f\in L_{loc}^1(\bR^d)\ \bigg|\ \|f\|_{\mathfrak{h}^1}:= \left\|\underset{0<s<1}{\sup}|G_s*f|\right\|_{L^1}<\infty\right\}\ .
\end{align*}
Let $\phi\in \mathfrak{h}^1(\bR^d)$. Then for all $t$
\begin{align*}
\left|\langle u(t), \phi\rangle\right|\lesssim &\left|\langle u_0, G_t*\phi\rangle\right|
\\
\lesssim &\|u_0\|_{bmo}\|G_t*\phi\|_{\mathfrak{h}^1}
\\
\lesssim &\|u_0\|_{bmo}\left\|\underset{0<s<1}{\sup}|G_s*(G_t*\phi)|\right\|_{L^1}
\\
\lesssim &\|u_0\|_{bmo}\left\|G_t*\underset{0<s<1}{\sup}|G_s*\phi|\right\|_{L^1}
\\
\lesssim &\|u_0\|_{bmo}\left\|\phi\right\|_{\mathfrak{h}^1}\ .
\end{align*}
Then, $\|u(t)\|_{bmo}\lesssim \|u_0\|_{bmo}$ follows by $\mathfrak{h}^1-bmo$ duality.
\end{proof}

\bigskip

\begin{proof}[Proof of Lemma~\ref{le:RealAnalytic}]
First of all, by Duhamel's principle (for distributions) we know \eqref{eq:DuhPrin} solves the equation in $\mathscr{S}'(\bR^d\times (0,T))$, and for any $t_0\in(0,T)$ the function
\begin{align*}
\tilde{u}(t)=e^{(t-t_0)\Delta}u(t_0)+ \int_{t_0}^{t} \nabla e^{(t-s)\Delta} R(f_s)\ ds
\end{align*}
solves the equation in $\mathscr{S}'(\bR^d\times (t_0,T))$; moreover, $\tilde u$ and $u$ agree on $\bR^d\times (t_0,T)$. The analyticity of $\tilde{u}$ is due to the following estimates: By Lemma~\ref{le:SemGpBMO} and the assumption \eqref{eq:LeRANonHomAsmp}, for any $t>t_0$ we have
\begin{align*}
\|\nabla u(t)\|_\infty &\lesssim \|\nabla e^{(t-t_0)\Delta}u(t_0)\|_\infty + \left\|\nabla \int_{t_0}^{t} \nabla e^{(t-s)\Delta} R(f_s)\ ds \right\|_\infty
\\
&\lesssim (t-t_0)^{-1/2}\|u(t_0)\|_{BMO}\ + \int_{t_0}^{t} \|\nabla e^{(t-s)\Delta} \nabla R(f_s)\|_\infty ds
\\
&\lesssim (t-t_0)^{-1/2}\|u(t_0)\|_{BMO}\ + (t-t_0)^{-1/2}\sup_{s>t_0}\|\nabla R(f_s)\|_{BMO}
\\
&\lesssim (t-t_0)^{-1/2}\left(\|u_0\|_{BMO}\ + \sup_{s<t_0}a_0(s)\|f_s\|_{BMO} \int_0^{t_0}(t_0-s)^{-\frac{1}{2}}a_0(s)^{-1}ds\right)
\\
&\qquad\qquad\qquad\qquad\qquad\qquad + (t-t_0)^{-1/2} \sup_{s>t_0}\|\nabla f_s\|_\infty\ .
\end{align*}
Similarly, one can show, for higher order derivatives,
\begin{align*}
\|\nabla^k u(t)\|_\infty &\lesssim (t-t_0)^{-\frac{k}{2}}\left(\|u_0\|_{BMO}\ +\tilde{a}_1(t_0) \sup_{s<t_0}a_0(s)\|f_s\|_{BMO}\right) + (t-t_0)^{-\frac{1}{2}}\sup_{t_0<s<t}\|\nabla^k f_s\|_\infty
\end{align*}
for some weight function $\tilde{a}_1$. This proves that for any $t>t_0$, $\tilde{u}(t)$, hence $u(t)$, is analytic. Since $t_0$ is arbitrary, $u(t)$ is analytic for every $t\in (0,T)$.
\end{proof}

\begin{lemma}[Montel's]\label{le:Montel}
Let $p\in[1,\infty]$ and let $\mathscr{F}$ be a set of analytic functions $f$ in an open set $\Omega\subset \bC^d$ such that
\begin{align*}
\underset{f\in\mathscr{F}}{\sup}\ \|f\|_{L^p(\Omega)}<\infty\ .
\end{align*}
Then $\mathscr{F}$ is a normal family.
\end{lemma}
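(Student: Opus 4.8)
The plan is to reduce the $L^p$-bounded family $\mathscr{F}$ to a uniformly bounded family on compact subsets of $\Omega$, after which the classical Montel theorem for several complex variables (local uniform boundedness implies normality) finishes the argument. The key mechanism is the sub-mean-value property of $|f|^p$ for an analytic (in particular pluriharmonic, hence plurisubharmonic) function $f$: for any polydisc $\overline{P(z_0,r)}\subset\Omega$ one has the estimate
\begin{align*}
|f(z_0)|^p \le \frac{1}{|P(z_0,r)|}\int_{P(z_0,r)}|f(w)|^p\,dw \le \frac{1}{|P(z_0,r)|}\,\|f\|_{L^p(\Omega)}^p\ ,
\end{align*}
where $|P(z_0,r)|$ is the Lebesgue measure of the polydisc in $\bC^d\cong\bR^{2d}$. (For $p=\infty$ there is nothing to do, since the hypothesis already gives a uniform sup bound.)

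The steps, in order, are as follows. First, fix a compact set $K\subset\Omega$ and let $r>0$ be small enough that the closed polydisc $\overline{P(z,2r)}\subset\Omega$ for every $z\in K$; such an $r$ exists by compactness since $\Omega$ is open. Second, apply the displayed sub-mean-value inequality at each $z\in K$ with radius $r$, obtaining
\begin{align*}
\sup_{z\in K}|f(z)| \le \big(c_{d}r^{2d}\big)^{-1/p}\sup_{g\in\mathscr{F}}\|g\|_{L^p(\Omega)}\qquad\text{for all }f\in\mathscr{F}\ ,
\end{align*}
where $c_d r^{2d}$ is the volume of a polydisc of polyradius $r$ in $\bC^d$; the right-hand side is finite and independent of $f$. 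Thus $\mathscr{F}$ is uniformly bounded on every compact subset of $\Omega$. Third, invoke the Montel/Stieltjes–Vitali theorem in several complex variables: a locally uniformly bounded family of holomorphic functions on $\Omega$ is normal, i.e. every sequence has a subsequence converging uniformly on compact subsets (the limit being automatically holomorphic). This yields the claim. One should note that the quotient $\mathscr{S}'/\cP$ subtlety of earlier sections does not intrude here: the elements of $\mathscr{F}$ are genuine holomorphic functions on $\Omega$, so pointwise values and the Cauchy-estimate machinery apply directly.

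The only step requiring a little care — and the closest thing to an obstacle — is the passage from an $L^p$ bound to a pointwise bound, i.e. the sub-mean-value estimate. For $p\ge 1$ it follows because $|f|^p$ is plurisubharmonic (as $f$ is holomorphic and $t\mapsto |t|^p$ is convex and increasing on $[0,\infty)$), so it satisfies the volume sub-mean-value inequality over polydiscs; alternatively one can expand $f$ in a power series centered at $z_0$, integrate over the polydisc to kill cross terms, and apply Jensen's inequality in the form $|a_0|^p \le$ (average of $|f|^p$). Everything else is standard covering-by-polydiscs and the cited multivariate Montel theorem.
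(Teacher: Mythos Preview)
Your argument is correct and is the standard route to this $L^p$-version of Montel's theorem: the sub-mean-value inequality for the plurisubharmonic function $|f|^p$ upgrades the uniform $L^p$ bound to a uniform sup bound on compacta, and then the classical Montel theorem applies. One small wording issue: in your first paragraph you call $f$ ``pluriharmonic, hence plurisubharmonic'' as if that directly explains the sub-mean-value property of $|f|^p$; it does not, since $|f|^p$ is not pluriharmonic. But you patch this correctly in the last paragraph, where you give the right reason ($|f|$ is plurisubharmonic and $t\mapsto t^p$ is convex increasing, so $|f|^p$ is plurisubharmonic and hence satisfies the volume sub-mean-value inequality on polydiscs).

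As for comparison with the paper: the paper does not prove this lemma at all. It is simply stated as a known result (labelled ``Montel's'') and invoked later in the convergence argument. So there is no discrepancy in approach to report; your proof supplies exactly the standard justification that the paper omits.
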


\bigskip

In the end, we give a proof of Lemma~\ref{le:HeatSmGpLfty}. To make the proof shorter, we borrow a result from \citet{Calderon1952}:
\begin{proposition}\label{prop:LlogLBddCZ}
Suppose $T$ is a Calder\'on-Zygmund operator with $K$ satisfying \eqref{eq:RadialCancel}. Let $f$ be a function such that
\begin{align}
\int_{\bR^d}|f(x)|\left(1+\ln^+|f(x)|\right)dx<\infty\ .
\end{align}
Then $Tf(x)$ is integrable over any set $S$ of finite measure and
\begin{align}
\int_S |Tf(x)|d\mu(x)\lesssim \int_{\bR^d}|f(x)|d\mu(x) + \int_{\bR^d}|f(x)|\ln^+\left(\mu(S)^{\frac{d+1}{d}}|f(x)|\right)d\mu(x) + \mu(S)^{-\frac{1}{d}}\
\end{align}
where $\mu$ denotes the Lebesgue measure in $\bR^d$.
\end{proposition}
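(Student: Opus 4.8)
The plan is to exploit only two standard facts about $T$ --- its boundedness on $L^{2}(\bR^{d})$ and its weak type $(1,1)$, both of which hold because $K$ is a symmetric Calder\'on--Zygmund kernel obeying the cancellation \eqref{eq:RadialCancel} (these are what we borrow from \citet{Calderon1952}) --- and to feed them into a decomposition of $f$ by its size at a threshold tuned to $\mu(S)$. First I would note that $1+\ln^{+}|f|\ge1$ forces $f\in L^{1}(\bR^{d})$, and that a routine truncation $f\mapsto f\1_{\{|f|\le n\}}$ (whose $L^{1}$ and $L\log L$ norms do not exceed those of $f$) together with Fatou's lemma --- $Tf$ being the in-measure limit of $Tf_{n}$ by weak-$(1,1)$ --- lets me assume $f$ bounded. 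Then I set
\[
N:=\mu(S)^{-\frac{d+1}{d}},\qquad\text{so that }\ \{\,|f|>N\,\}=\{\,\mu(S)^{\frac{d+1}{d}}|f|>1\,\}\ \text{and}\ \mu(S)\,N=\mu(S)^{-1/d},
\]
and split $f=g+\sum_{k\ge0}h_{k}$ with $g:=f\1_{\{|f|\le N\}}$ and $h_{k}:=f\1_{\{2^{k}N<|f|\le2^{k+1}N\}}$ (a finite sum, $f$ being bounded), so that $\int_{S}|Tf|\le\int_{S}|Tg|+\sum_{k}\int_{S}|Th_{k}|$.

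For the low part $g$, which is bounded by $N$ and in $L^{1}$, I would use $\|g\|_{2}^{2}\le N\|g\|_{1}\le N\|f\|_{1}$, Cauchy--Schwarz over $S$, and $L^{2}$-boundedness of $T$ to get $\int_{S}|Tg|\le\mu(S)^{1/2}\|Tg\|_{2}\lesssim(\mu(S)\,N\,\|f\|_{1})^{1/2}=(\mu(S)^{-1/d}\|f\|_{1})^{1/2}\le\mu(S)^{-1/d}+\|f\|_{1}$; the choice of $N$ is exactly what makes this come out as $\|f\|_{1}+\mu(S)^{-1/d}$.

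For an annular piece $h_{k}$ with $a_{k}:=\|h_{k}\|_{1}>0$, I have three bounds on $\mu(S\cap\{|Th_{k}|>s\})$: the trivial one $\mu(S)$; the weak-$(1,1)$ bound $Ca_{k}/s$; and, since $h_{k}\in L^{1}\cap L^{\infty}\subset L^{2}$ with $\|h_{k}\|_{2}^{2}\le2^{k+1}N\,a_{k}$, the Chebyshev bound $C\,2^{k+1}N\,a_{k}/s^{2}$. Inserting the minimum of these into $\int_{S}|Th_{k}|=\int_{0}^{\infty}\mu(S\cap\{|Th_{k}|>s\})\,ds$ --- $\mu(S)$ for small $s$, $Ca_{k}/s$ for intermediate $s$, the $L^{2}$ bound for $s\gtrsim2^{k}N$, with a harmless case split depending on whether the $\mu(S)$-to-weak-$(1,1)$ crossover precedes or follows $s\sim2^{k}N$ --- should give
\[
\int_{S}|Th_{k}|\ \lesssim\ a_{k}+a_{k}\ln^{+}\!\frac{2^{k}N\mu(S)}{a_{k}}\ =\ a_{k}+a_{k}\ln^{+}\!\frac{2^{k}\mu(S)^{-1/d}}{a_{k}}.
\]
Then I sum over $k$, writing $\ln^{+}(2^{k}M/a)\le k\ln2+\ln^{+}(M/a)$ with $M:=\mu(S)^{-1/d}$. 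The terms $\sum_{k}a_{k}$ and $\ln2\sum_{k}k\,a_{k}$ are controlled using $\sum_{k}a_{k}\le\|f\|_{1}$ and the observation that on the $k$-th annulus $|f|>2^{k}N$ forces $\mu(S)^{\frac{d+1}{d}}|f|>2^{k}$, so $\sum_{k}k\,a_{k}\le\frac{1}{\ln2}\int_{\bR^{d}}|f|\ln^{+}(\mu(S)^{\frac{d+1}{d}}|f|)$. For $\sum_{k}a_{k}\ln^{+}(M/a_{k})$ I would use the elementary bound $a\ln^{+}(M/a)\le k\,a+k\,Me^{-k}$ for $k\ge1$ and $a\ln^{+}(M/a)\le M/e$ for $k=0$ --- which follows from the monotonicity of $x\mapsto x\ln(M/x)$ on $(0,M/e]$ --- so this sum is $\lesssim\sum_{k}k\,a_{k}+M$. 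Adding everything yields $\int_{S}|Tf|\lesssim\|f\|_{1}+\int_{\bR^{d}}|f|\ln^{+}(\mu(S)^{\frac{d+1}{d}}|f|)+\mu(S)^{-1/d}$, which is the claim.

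The hard part will be the single-annulus estimate: arranging the $\mu(S)$, weak-$(1,1)$, and $L^{2}$-Chebyshev bounds through the layer-cake integral so as to land on precisely $a_{k}\bigl(1+\ln^{+}(2^{k}N\mu(S)/a_{k})\bigr)$ (the two crossover regimes must each be checked), and then making the $k$-sum telescope into $\|f\|_{1}$, the $L\log L$-type integral, and $\mu(S)^{-1/d}$ with no stray logarithmic factors --- this is where the specific scale $N=\mu(S)^{-\frac{d+1}{d}}$ and the entropy inequality $a\ln^{+}(M/a)\le ka+kMe^{-k}$ are essential. Everything else --- the reduction to bounded $f$, the $L^{2}$ treatment of $g$, tracking constants, and separating the cases $\mu(S)\ge1$ and $\mu(S)<1$ --- is routine, and I do not expect the symmetry of $K$ or the cancellation \eqref{eq:RadialCancel} to enter beyond ensuring that $T$ is a bona fide Calder\'on--Zygmund operator (hence $L^{2}$-bounded and of weak type $(1,1)$).
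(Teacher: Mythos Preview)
Your argument is correct and complete. Note, however, that the paper does not actually supply a proof of this proposition --- it is quoted as a result borrowed from \citet{Calderon1952} and used as a black box in the proof of Lemma~\ref{le:HeatSmGpLfty} --- so there is no in-paper proof to compare yours against. What you have written is in effect a self-contained reconstruction of the classical Calder\'on--Zygmund argument (decompose $f$ at the scale $N=\mu(S)^{-(d+1)/d}$, handle the bounded part via $L^{2}$ and Cauchy--Schwarz over $S$, and treat each dyadic shell by interpolating the trivial, weak-$(1,1)$, and $L^{2}$--Chebyshev distribution-function bounds through the layer cake), and every step --- including the two crossover regimes in the single-annulus estimate and the entropy summation via $a\ln^{+}(M/a)\le ka+kMe^{-k}$ --- checks out.
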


\textit{Duality in Orlicz spaces:} It is well known that the Orlicz space $\phi(L)(\mu)$ is defined to be the function space with the norm
\begin{align*}
\|f\|_{\phi(L)}:=\inf\left\{s>0\ \big|\ \int_X\phi(s^{-1}|f|)d\mu\le 1\right\}
\end{align*}
where $\phi:\bR^+\to\bR^+$ is convex and increasing with $\phi(0)=0$. And the dual of $\phi(L)(\mu)$ is the Orlicz space $\psi(L)(\mu)$ (with the same norm) where $\psi$ is the Legendre-Fenchel transform of $\phi$ :
\begin{align*}
\psi(y):=\sup\left\{xy-\phi(x)\ |\ x\in\bR^+\right\}\ ,
\end{align*}
and vice versa. In particular, we set $\phi_*(x)=x\ln(e+x)$ and $\psi_*(x)=e^x-1$. Note that the Legendre-Fenchel transform of $\phi_*$ is not $\psi_*$ but comparable to $e^x$, however, if we consider the restriction of Lebesgue measure $\mu$ on a set $S$ of finite measure with $\mu(S)\lesssim 1$, then $\phi_*(L)(\mu|_{S})$ and $\psi_*(L)(\mu|_{S})$ are mutually dual spaces.

Such duality result, together with the ``quasi-boundedness'' of $\mathcal{CZO}$ in $L\log L$ as shown by Proposition~\ref{prop:LlogLBddCZ}, yields the following:
\begin{corollary}\label{cor:eLBddCZ}
For any set $S_1,S_2$ with finite Lebesgue measures such that $\mu(S_1)=c\mu(S_2)\lesssim1$ for some constant $c>0$, the $\cC$-$\cZ$ operator $T$ described in Lemma~\ref{le:HeatSmGpLfty} and Proposition~\ref{prop:LlogLBddCZ} is a bounded operator from $L^\infty(\mu|_{S_1})$ to $\psi_*(L)(\mu|_{S_2})$ where $\psi_*(x)=e^x-1$ and $\mu|_{S_i}$ denotes the restriction of Lebesgue measure on $S_i$. Moreover,
\begin{align*}
\|T\|_{L^\infty(\mu|_{S_1})\to \psi_*(L)(\mu|_{S_2})}\lesssim 1
\end{align*}
which is independent of $S_1$ and $S_2$.
\end{corollary}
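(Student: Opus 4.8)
The plan is to derive the corollary from the two ingredients already assembled: the $L\log L$ bound of Proposition~\ref{prop:LlogLBddCZ} and the mutual duality, recorded above, of $\phi_*(L)(\mu|_S)$ and $\psi_*(L)(\mu|_S)$ on a set $S$ with $\mu(S)\lesssim1$. Fix $f\in L^\infty(\mu|_{S_1})$ and extend it by zero off $S_1$; since $\mu(S_1)<\infty$ this extension lies in every $L^q(\bR^d)$, so in particular $f\in L^2(\bR^d)$ and $Tf\in L^2(\bR^d)$. In the supremum defining $\|Tf\|_{\psi_*(L)(\mu|_{S_2})}$ via duality with $\phi_*(L)(\mu|_{S_2})$ one may, by truncation and monotone convergence, restrict to bounded test functions; thus it suffices to prove
\[
\Bigl|\int_{S_2}(Tf)\,h\,d\mu\Bigr|\ \lesssim\ \|f\|_{L^\infty}\qquad\text{for every }h\in L^\infty(\mu|_{S_2})\text{ with }\|h\|_{\phi_*(L)(\mu|_{S_2})}\le1,
\]
with the implied constant independent of $S_1,S_2$.

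Next I would transfer $T$ onto the test function. Extending such an $h$ by zero off $S_2$, we have $h\in L^\infty(\bR^d)\cap L^2(\bR^d)$, so the symmetry $K(x,y)=K(y,x)$ and the $L^2$-boundedness of $T$ give $\int_{S_2}(Tf)\,h\,d\mu=\int_{S_1}f\,(Th)\,d\mu$, whence
\[
\Bigl|\int_{S_2}(Tf)\,h\,d\mu\Bigr|\ \le\ \|f\|_{L^\infty}\int_{S_1}|Th|\,d\mu .
\]
Being bounded on a set of finite measure, $h$ lies in $L\log L(\bR^d)$, so Proposition~\ref{prop:LlogLBddCZ} applies with $S=S_1$ and bounds $\int_{S_1}|Th|\,d\mu$ by
\[
\int_{S_2}|h|\,d\mu+\int_{S_2}|h|\,\ln^+\!\bigl(\mu(S_1)^{\frac{d+1}{d}}|h|\bigr)\,d\mu+\mu(S_1)^{-\frac1d}.
\]

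It remains to bound this by a constant uniformly in $S_1,S_2$. Since $\|h\|_{\phi_*(L)(\mu|_{S_2})}\le1$ we have $\int_{S_2}|h|\ln(e+|h|)\,d\mu\le1$; as $\ln(e+x)\ge1$ this already bounds the first term by $1$. For the second, $\mu(S_1)\lesssim1$ gives $\mu(S_1)^{(d+1)/d}\le C_0$ for a fixed $C_0$, and the elementary inequality $\ln^+(C_0 x)\le(1+\ln C_0)\ln(e+x)$ for $x\ge0$ turns the second term into $(1+\ln C_0)\int_{S_2}|h|\ln(e+|h|)\,d\mu\le1+\ln C_0$. The third term, $\mu(S_1)^{-1/d}$, is controlled through the relation $\mu(S_1)=c\,\mu(S_2)$ and the bound on the common size of $S_1$ and $S_2$. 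Collecting these estimates and invoking the reduction of the first paragraph gives $\|T\|_{L^\infty(\mu|_{S_1})\to\psi_*(L)(\mu|_{S_2})}\lesssim1$, independent of $S_1,S_2$. I expect the crux to be not any single computation but the demand for \emph{uniformity}: one must check that the comparison constants in the mutual Orlicz duality and all the implied constants in Proposition~\ref{prop:LlogLBddCZ} — in particular the contribution of the residual term $\mu(S_1)^{-1/d}$ — remain bounded as $S_1,S_2$ range over sets of bounded, comparable Lebesgue measure, this dependence being precisely the content of the corollary beyond the boundedness of $T$ between the two Orlicz spaces for one fixed pair $(S_1,S_2)$.
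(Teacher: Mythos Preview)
Your approach is the same as the paper's: Orlicz duality on $S_2$, transfer of $T$ onto the test function via symmetry, and Proposition~\ref{prop:LlogLBddCZ} applied with $S=S_1$. The first two terms are handled correctly.

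The gap is exactly where you flag it but do not close it. The claim that the residual $\mu(S_1)^{-1/d}$ is ``controlled through the relation $\mu(S_1)=c\,\mu(S_2)$ and the bound on the common size'' is false as written: the hypothesis $\mu(S_1)\lesssim1$ is only an \emph{upper} bound, so $\mu(S_1)^{-1/d}\to\infty$ as $\mu(S_1)\to0$, and nothing in your argument prevents this. With the test function normalized to $\|h\|_{\phi_*(L)(\mu|_{S_2})}\le1$, the bound you obtain for $\int_{S_1}|Th|\,d\mu$ genuinely blows up.

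The paper's fix is a rescaling of the test function: instead of normalizing $\|h\|_{\phi_*(L)(\mu|_{S_2})}=1$, set $\|h\|_{\phi_*(L)(\mu|_{S_2})}=\mu(S_1)^{-1/d}=:M$ (equivalently, reduce by homogeneity to the case $\mu(S_1)=1$). The duality target then becomes $\int_{S_1}|Th|\,d\mu\lesssim M$, and all three terms from Proposition~\ref{prop:LlogLBddCZ} match this size: the residual $\mu(S_1)^{-1/d}$ equals $M$ exactly, and writing $\tilde h=|h|/M$ one has $\int|h|\,d\mu\le M\int\tilde h\ln(e+\tilde h)\,d\mu\le M$ and, since $\mu(S_1)^{(d+1)/d}=M^{-(d+1)}\le M^{-1}$, also $\int|h|\ln^+(\mu(S_1)^{(d+1)/d}|h|)\,d\mu\le M\int\tilde h\ln(e+\tilde h)\,d\mu\le M$. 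With this one change your argument goes through uniformly in $S_1,S_2$.
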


\begin{proof}
Without loss of generality we assume $\mu(S_1)=1$ (In general we set $\|f\|_{\phi_*(L)(\mu|_{S_2})}=\mu(S_1)^{-\frac{1}{d}}$ for the argument below; also notice that the proof becomes trivial for the special case $\mu(S_1)=0$). First, recall that the dual of $\psi_*(L)(\mu|_{S_2})$ is $\phi_*(L)(\mu|_{S_2})$ where $\phi_*(x)=x\ln(e+x)$. By Proposition~\ref{prop:LlogLBddCZ}, for any $f\in \phi_*(L)(\mu|_{S_2})$ with $\|f\|_{\phi_*(L)(\mu|_{S_2})}=1$,
\begin{align*}
\int_{S_1} |T(f\1_{S_2})|d\mu(x)&\lesssim \int_{\bR^d}|f\1_{S_2}|d\mu(x) + \int_{\bR^d}|f\1_{S_2}|\ln^+\left(|f\1_{S_2}|\right)d\mu(x) + 1
\\
&\lesssim 2\|f\1_{S_2}\|_{\phi_*(L)(\mu)}\lesssim 2\|f\|_{\phi_*(L)(\mu|_{S_2})} \lesssim 1\ .
\end{align*}
Then, by the above estimate and the self-adjointness of $T$, it follows that for any $g\in L^\infty(\mu|_{S_1})$ and $f\in \phi_*(L)(\mu|_{S_2})$ with $\|f\|_{\phi_*(L)(\mu|_{S_2})}=1$,
\begin{align*}
\left|\int_{S_2} f(x) T(g\1_{S_1})(x)d\mu(x)\right|&=\left|\int_{\bR^d} T(f\1_{S_2})(y) g\1_{S_1}(y)d\mu(y)\right|
\\
&\lesssim \|g\1_{S_1}\|_{L^\infty}\int_{S_1} |T(f\1_{S_2})|d\mu(y)\lesssim \|g\1_{S_1}\|_{L^\infty}\ .
\end{align*}
Then, by the duality $\psi_*(L)(\mu|_{S_2})\approx\left(\phi_*(L)(\mu|_{S_2})\right)^*$, it follows that
\begin{align*}
\|T(g\1_{S_1})\|_{\psi_*(L)(\mu|_{S_2})}\lesssim \|g\1_{S_1}\|_{L^\infty}\ ,
\end{align*}
in other words, $T: L^\infty(\mu|_{S_1})\to \psi_*(L)(\mu|_{S_2})$ is bounded.
\end{proof}

\bigskip

\begin{proof}[Proof of Lemma~\ref{le:HeatSmGpLfty}]
First we prove for $k=1$. Let $f_x(y)$ denote the translation $f(x-y)$, then for an open ball $B$ centered at 0 with radius $r_B$, we have the decomposition
\begin{align*}
\left|g_t*|Tf|\right|&=\left|\int_{\bR^d}g_t(y)|Tf_x(y)|dy\right|
\\
&\lesssim \int_{B^c}|g_t||Tf_x|dy\ + \int_{B}|g_t|\left|T\left(f_x\1_{(3B)^c}\right)\right|dy\ + \int_{B}|g_t|\left|T\left(f_x\1_{3B}\right)\right|dy=:H+I+J\
\end{align*}
where $\kappa B$ is $\kappa$-multiple dilation of $B$ from the center.
If $p>1$, since $B$ is centered at 0, i.e. $c_B=0$, by the assumption on $g$ and $\Phi$, H\"older's inequality and $L^p$-boundedness of $\mathcal{CZO}$,
\begin{align*}
H &\lesssim \int_{B^c}\Phi_t(y)|Tf_x(y)|dy
\\
&\lesssim \|\Phi_t\|_{L^{p'}(B^c)}\|Tf_x\|_{L^p(B^c)}
\\
&\lesssim |\Phi_t(r_B)|^{\frac{p'-1}{p'}}\left(\|\Phi_t\|_{L^1}\right)^{1/p'}\|f_x\|_{L^p}
\\
&\lesssim |\Phi(r_B/t)|^{\frac{p'-1}{p'}}\left(\|\Phi\|_{L^1}\right)^{1/p'}\|f\|_{L^p}\ .
\end{align*}
If $p=1$, pick a $q$ such that $p<q<\infty$, then by the above argument with $L^p$ interpolations
\begin{align*}
H &\lesssim \int_{B^c}\Phi_t(y)|Tf_x(y)|dy
\\
&\lesssim |\Phi(r_B/t)|^{\frac{q'-1}{q'}}\left(\|\Phi\|_{L^1}\right)^{1/q'}\|f\|_{L^q}
\\
&\lesssim |\Phi(r_B/t)|^{\frac{q'-1}{q'}}\left(\|\Phi\|_{L^1}\right)^{1/q'}\|f\|_{L^\infty}^{(q-p)/q}\|f\|_{L^p}^{p/q}\ .
\end{align*}
By the ``size'' condition of the $\cC$-$\cZ$ kernel $K$ and H\"older's inequality
\begin{align*}
\left|T\left(f_x\1_{(3B)^c}\right)(c_B)\right|\lesssim r_B^{-(p'-1)d/p'} \|f_x\|_{L^p}\lesssim r_B^{-(p'-1)d/p'} \|f\|_{L^p}\ .
\end{align*}
For any $y\in B$, by the ``smoothness'' condition of $K$,
\begin{align*}
\left|T\left(f_x\1_{(3B)^c}\right)(y)-T\left(f_x\1_{(3B)^c}\right)(c_B)\right|&\lesssim\int_{(3B)^c} \left|K(y,z)-K(c_B,z)\right||f_x(z)|dz
\\
&\lesssim\int_{(3B)^c} \frac{|y-c_B|^\delta}{|z-c_B|^{d+\delta}}|f_x(z)|dz\lesssim_\delta \|f\|_{L^\infty}  \ .
\end{align*}
Therefore $\displaystyle I\lesssim_{cz} \|f\|_{L^\infty} + r_B^{-(p'-1)d/p'} \|f\|_{L^p}$. By the duality of the Orlicz spaces,
\begin{align*}
J\lesssim \|g_t\|_{\phi_*(L)(\mu|_B)}\|T\left(f_x\1_{3B}\right)\|_{\psi_*(L)(\mu|_B)}
\end{align*}
where $\phi_*$ and $\psi_*$ are given in Corollary~\ref{cor:eLBddCZ}. Now, by the corollary (being applied with $S_1=3B$ and $S_2=B$ as well as $r_B\approx 1$),
\begin{align*}
J &\lesssim \|g_t\|_{\phi_*(L)}\|f_x\1_{3B}\|_{L^\infty}
\\
&\lesssim \|f\|_{L^\infty} \int_{\bR^d}|g_t(x)|\ln(e+|g_t(x)|)dx
\\
&\lesssim_d \ln(e+t^{-1})\ \|f\|_{L^\infty} \int_{\bR^d}\Phi(x)\ln(e+\Phi(x))dx\ .
\end{align*}
To sum up, we have shown that, for some constants $\alpha,\beta,\gamma>0$,
\begin{align*}
\left\|g_t*|Tf|\right\|_{L^\infty}&\lesssim \left(|\Phi(r_B/t)|^{1-\beta}\|\Phi\|_{L^1}^\beta + 1+ r_B^{-\gamma} + \|\Phi\|_{\phi_*(L)}\ln(e+t^{-1})\right)
\\
&\qquad\qquad\qquad\qquad \times\left(\|f\|_{L^\infty}+\|f\|_{L^p}+\|f\|_{L^\infty}^\alpha\|f\|_{L^p}^{1-\alpha}\right)\ .
\end{align*}
If $t\le r_B\approx 1$, then, by the decreasing property of $\Phi$,
\begin{align*}
\left\|g_t*|Tf|\right\|_{L^\infty}&\lesssim \left(1 + |\Phi(1)|^{1-\beta}\|\Phi\|_{L^1}^\beta + \|\Phi\|_{\phi_*(L)}\ln(e+t^{-1})\right)
\\
&\qquad\qquad\qquad\qquad \times\left(\|f\|_{L^\infty}+\|f\|_{L^p}+\|f\|_{L^\infty}^\alpha\|f\|_{L^p}^{1-\alpha}\right)\
\end{align*}
which proves the lemma for $k=1$. The proof for $k\neq1$ is similar: We still do the decomposition in the first step, that is
\begin{align*}
\left|g_t*|Tf|^k\right|&\lesssim_k \int_{B^c}|g_t||Tf_x|^k dy\ + \int_{B}|g_t|\left|T\left(f_x\1_{(3B)^c}\right)\right|^k dy\ + \int_{B}|g_t|\left|T\left(f_x\1_{3B}\right)\right|^k dy
\\
&=:H+I+J\ .
\end{align*}
The estimations for $H$ and $I$ are completely the same as those for $k=1$. In order to estimate $J$, we need to modify the Orlicz spaces we used in the proof, i.e. we take $\psi_k(x)=e^{x^{1/k}}-1$ instead of $\psi_*(x)=e^x-1$. Then the corresponding $\phi_k$ (Legendre-Fenchel transform of $\psi_k$) is still some function growing slightly faster than linear up to a logarithmic factor which is approximately $(\ln(e+x))^k$ as $x$ gets larger. Thus, Corollary~\ref{cor:eLBddCZ} and the definition of $\psi_k$ yields similar estimates:
\begin{align*}
J &\lesssim \|g_t\|_{\phi_k(L)(\mu|_B)}\left\||T\left(f_x\1_{3B}\right)|^k\right\|_{\psi_k(L)(\mu|_B)}
\\
&\lesssim \|\Phi_t\|_{\phi_k(L)(\mu|_B)}\left(\left\|T\left(f_x\1_{3B}\right)\right\|_{\psi_*(L)(\mu|_B)}\right)^k
\\
&\lesssim \int_{\bR^d}\Phi_t(x)(\ln(e+\Phi_t(x)))^kdx \ \|f_x\1_{3B}\|_{L^\infty}^k
\\
&\lesssim_d (\ln(e+t^{-1}))^k \int_{\bR^d}\Phi(x)(\ln(e+\Phi(x)))^kdx \ \|f\|_{L^\infty}^k\ .
\end{align*}
\end{proof}

\end{appendix}

\section*{Acknowledgments}
This question was developed during my graduate study at the University of Virginia Math Department with the instructions by Professor Zoran Gruji\'c, my program advisor. I would give thanks to Professor Gruji\'c, for offering a topic course in the Navier-Stokes equations, from the formulation of weak solutions to the existence of strong solutions in $L^\infty(0,T; L^p)$ with $p>3$ and $T$ depending on the $L^p$-norm of the initial data, i.e. the proof of existence in $L^\infty(0,T; L^p)$ and how existence in $L^\infty(0,T; L^p)$ results in smoothness up to $T$ and the further discussions with him about the model, more specifically, the sharpest result on the regularity criterion in the $L^p$-functional framework seemed to be $u \in L^p (0,T; L^q)$ with $\displaystyle\frac{3}{q} + \frac{2}{p} = 1$ or $u \in L^\infty (0,T; L^3)$ i.e. the criteria with scale invariant norms without hypothesis, or $u \in L^\infty (0,T; L_w^3)$ and other scale invariant norms with restrictions while the best result for the \textit{a priori} bound seemed to be $u \in L^p (0,T; L^q)$ with $\displaystyle\frac{3}{q} + \frac{2}{p} = \frac{3}{2}$ or $\omega \in L^\infty (0,T; L^1)$, followed by the notion of `scaling gap'.
Which approaches seemed more realistic? Any direct method to prove smoothness within arbitrarily large time span? To improve regularity from the weak solution or to extend the strong solution initiated from large initial values or both? How logarithmic sub-criticality, thus regularity, is achievable with coherence of vorticity direction in the vortex stretching scenario? And the mathematical setup for the sparsity of intense velocity or vorticity and how sparseness of the super-level sets results in intermittency of the magnitude, as demonstrated in \cite{Bradshaw2019} and \cite{Grujic2013}.
And his reading suggestions \cite{Foias2001}, \cite{LemarieRieusset2002}, \cite{Lions1996}, \cite{Caffarelli1982}, \cite{Ransford1995}, \cite{Iskauriaza2003}, \cite{Dascaliuc2012}, \cite{Cianchi2003}, \cite{Constantin1993}, \cite{BeiraodaVeiga2002}, \cite{Bradshaw2014}, \cite{Bradshaw2015}, \cite{Farhat2017}, \cite{Bradshaw2019}, \cite{Grujic2013}.  And \textit{much freedom for self-explorations he approved of in my graduate study.} I express respect and gratitude to Zoran Grujic for the patience in discussing the complexification method for proving analyticity and for fixing some language expressions at the completion of the first draft.  The acknowledgment in the first draft was non-exact.  I am grateful that the University of Virginia and UVA Math Department grant me half-year extension of the program.

%\bibliographystyle{abbrvnat}
%\bibliographystyle{plainnat}
%%\bibliographystyle{./bibfiles/unsrtdin}
%\bibliography{BIB_PDE_HypReg2020}

\def\cprime{$'$}

\end{document}